\tikzset{ext/.style={circle, draw,inner sep=1pt},int/.style={circle,draw,fill,inner sep=1.4pt},nil/.style={inner sep=1pt}}
\tikzset{cy/.style={circle,draw,fill,inner sep=2pt},scy/.style={circle,draw,inner sep=2pt},scyx/.style={draw,cross out,inner sep=2pt},scyt/.style={draw,regular polygon,regular polygon sides=3,inner sep=0.95pt}}
\tikzset{exte/.style={circle, draw,inner sep=3pt},inte/.style={circle,draw,fill,inner sep=3pt}}
\tikzset{diagram/.style={matrix of math nodes, row sep=3em, column sep=2.5em, text height=1.5ex, text depth=0.25ex}}
\tikzset{diagram2/.style={matrix of math nodes, row sep=0.5em, column sep=0.5em, text height=1.5ex, text depth=0.25ex}}
\tikzset{
  rightblue/.style={
    decoration={markings,mark=at position .8 with {\arrow[scale=1.2,blue]{latex}}},
    postaction={decorate},
    shorten >=0.4pt}}
\tikzset{
  leftblue/.style={
    decoration={markings,mark=at position .6 with {\arrowreversed[scale=1.2,blue]{latex}}},
    postaction={decorate},
    shorten >=0.4pt}}
\tikzset{
  rightred/.style={
    decoration={markings,mark=at position .4 with {\arrow[scale=1.2,red]{latex}}},
    postaction={decorate},
    shorten >=0.4pt}}
\tikzset{
  leftred/.style={
    decoration={markings,mark=at position .2 with {\arrowreversed[scale=1.2,red]{latex}}},
    postaction={decorate},
    shorten >=0.4pt}}
\tikzset{
  crossed/.style={
    decoration={markings,mark=at position .5 with {\arrow{|}}},
    postaction={decorate},
    shorten >=0.4pt}}
\newcommand{\notadp}{{
\begin{tikzpicture}[baseline=-.55ex,scale=.2, every loop/.style={}]
 \node[circle,draw,fill,inner sep=.5pt] (a) at (0,0) {};
 \draw (a) edge[loop] (a);
 \draw (-.2,-.2) -- (.2,.5);
\end{tikzpicture}}}
\newcommand{\Ed}{{
\begin{tikzpicture}[baseline=-.8ex,scale=.5]
\node[nil] (a) at (0,0) {};
\node[nil] (b) at (1,0) {};
\draw (a) edge[-latex] (b);
\end{tikzpicture}}}
\newcommand{\dE}{{
\begin{tikzpicture}[baseline=-.8ex,scale=.5]
\node[nil] (a) at (0,0) {};
\node[nil] (b) at (1,0) {};
\draw (a) edge[latex-] (b);
\end{tikzpicture}}}
\newcommand{\EdE}{{
\begin{tikzpicture}[baseline=-.65ex,scale=.5]
 \node[nil] (a) at (0,0) {};
 \node[int] (b) at (1,0) {};
 \node[nil] (c) at (2,0) {};
 \draw (a) edge[-latex] (b);
 \draw (b) edge[latex-] (c);
\end{tikzpicture}}}
\newcommand{\dEd}{{
\begin{tikzpicture}[baseline=-.65ex,scale=.5]
 \node[nil] (a) at (0,0) {};
 \node[int] (b) at (1,0) {};
 \node[nil] (c) at (2,0) {};
 \draw (a) edge[latex-] (b);
 \draw (b) edge[-latex] (c);
\end{tikzpicture}}}
\newcommand{\Es}{{
\begin{tikzpicture}[baseline=-.8ex,scale=.5]
\node[nil] (a) at (0,0) {};
\node[nil] (b) at (1,0) {};
\draw (a) edge[->] (b);
\end{tikzpicture}}}
\newcommand{\Ess}{{
\begin{tikzpicture}[baseline=-.65ex,scale=.5]
 \node[nil] (a) at (0,0) {};
 \node[nil] (c) at (1.4,0) {};
 \draw (a) edge[crossed,->] (c);
 %\draw (.7,.15) edge (.7,-.15);
\end{tikzpicture}}}
\newcommand{\Esss}{{
\begin{tikzpicture}[baseline=-.65ex,scale=.5]
 \node[nil] (a) at (0,0) {};
 \node[nil] (c) at (1.8,0) {};
 \draw (a) edge[->] (c);
 \draw (.6,.15) edge (.6,-.15);
 \draw (1.2,.15) edge (1.2,-.15);
\end{tikzpicture}}}
\newcommand{\ET}{{
\begin{tikzpicture}[baseline=-.65ex,scale=.5]
 \node[int] (a) at (0,0) {};
 \node[int] (c) at (1,0) {};
 \draw (a) edge[very thick,->] (c);
\end{tikzpicture}}}
\newcommand{\mxto}[1]{{\overset{#1}{\longmapsto}}}
\theoremstyle{plain}
  \newtheorem{thm}[figure]{Theorem}
  \newtheorem{defi}[figure]{Definition}
  \newtheorem{prop}[figure]{Proposition}
  \newtheorem{cor}[figure]{Corollary}
  \newtheorem{lemma}[figure]{Lemma}
\theoremstyle{definition}
  \newtheorem{ex}[figure]{Example}
  \newtheorem{rem}[figure]{Remark}
\newcommand{\K}{{\mathbb{K}}}
\newcommand{\Z}{{\mathbb{Z}}}
\newcommand{\N}{{\mathbb{N}}}
\newcommand{\GC}{\mathrm{GC}}
\newcommand{\fGC}{\mathrm{fGC}}
\newcommand{\dGC}{\mathrm{dGC}}
\newcommand{\MGC}{\mathrm{MGC}}
\newcommand{\fGCc}{\mathrm{fGCc}}
\newcommand{\mV}{\mathrm{V}}
\newcommand{\mE}{\mathrm{E}}
\newcommand{\mB}{\mathrm{B}}
\newcommand{\mD}{\mathrm{D}}
\newcommand{\mO}{\mathrm{O}}
\newcommand{\mS}{\mathrm{S}}
\newcommand{\mM}{\mathrm{M}}
\DeclareMathOperator{\sgn}{sgn}
\newcommand{\gra}{\mathrm{gra}}
\newcommand{\sgra}{\mathrm{sgra}}
\newcommand{\Gra}{\mathrm{Gra}}
\newcommand{\sGra}{\mathrm{sGra}}
\newcommand{\grac}{\mathrm{grac}}
\newcommand{\Grac}{\mathrm{Grac}}
\begin{document}
\title{Multi-directed graph complexes and quasi-isomorphisms between them II: Sourced graphs}

\author{Marko \v Zivkovi\' c}
\address{Mathematics Research Unit\\ University of Luxembourg\\ Grand Duchy of Luxembourg}
\email{marko.zivkovic@uni.lu}

% \subjclass[2000]{16E45; 53D55; 53C15; 18G55}
% \date{}

\keywords{Graph Complexes}

\begin{abstract}
We prove that the inclusion from oriented graph complex into graph complex with at least one source is a quasi-isomorphism, showing that homology of the ``sourced'' graph complex is also equal to the homology of standard Kontsevich's graph complex. This result may have applications in theory of multi-vector fields $T_{\rm poly}^{\geq 1}$ of degree at least one, and to the hairy graph complex which computes the rational homotopy of the space of long knots.
The result is generalized to multi-directed graph complexes, showing that all such graph complexes are quasi-isomorphic.
These complexes play a key role in the deformation theory of multi-oriented props recently invented by Sergei Merkulov.
We also develop a theory of graph complexes with arbitrary edge types.
\end{abstract}

\maketitle

\section{Introduction}

Generally speaking, graph complexes are graded vector spaces of formal linear combinations of isomorphism classes of some kind of graphs. Each of graph complexes plays a certain role in a subfield of homological algebra or algebraic topology. They have an elementary and simple combinatorial definition, yet we know very little about what their cohomology actually is.

The broad kind of results in graph-complex theory are results of quasi-isomorphism between various graph complexes. Even if we do not know the homology of any of them, the result that their homologies are the same may be useful and may connect various fields of mathematics.

The basic graph complexes we study in this paper are the following.
\begin{itemize}
\item Kontsevich's graph complex $\GC_n$, introduced  by M.\ Kontsevich in \cite{Kont1}, \cite{Kont2}.
\item Directed graph complex $\mD\GC_n$ (in literature often denoted by $\dGC_n$) of graphs with directed edges, mentioned e.g.\ in \cite[Appendix K]{grt}.
\item Oriented graph complex $\mO\GC_n$ (in literature often denoted by $\GC_n^{or}$) of directed graphs without a loop along directed edges, studied e.g.\ in \cite{oriented}.
\item Sourced graph complex $\mS\GC_n$ of directed graphs with at least one source, i.e.\ vertex with only outgoing edges. Note that no-loop condition implies that there is a source, so $\mO\GC_n\subset\mS\GC_n$.
\item Multi-directed oriented and sourced graph complex $\mO_j\mS_k\mD_\ell\GC_n$. It is the complex spanned by graphs whose edges have $j+k+l$ independent directions, $j$ of which make the graph oriented and $k$ of which make the graph sourced.
\end{itemize}

Not yet fully known, the homology of Kontsevich's graph complex is studied the most. Willwacher provided the explicit inclusion $\GC_n\rightarrow\mD\GC_n$ that is a quasi-isomorphism in \cite[Appendix K]{grt}. In \cite{oriented} Willwacher also showed that $\GC_n$ is quasi-isomorphic to $\mO\GC_{n+1}$. The same result is the consequence of the broader theory of Merkulov and Willwacher, \cite[6.3.8.]{MW}. Therefore, homologies of Kontsevich's, directed and oriented graph complexes are essentially the same.

This paper is the second in the series about multi-directed graph complexes and quasi-isomorphisms between them. In the first paper, \cite{Multi}, we presented the third, more direct proof that $\GC_n$ is quasi-isomorphic to $\mO\GC_{n+1}$ by providing an explicit quasi-isomorphism, and we generalised it to all multi-directed complexes.

In this paper we develop the techniques to get a new result, to prove that sourced graph complex has the same homology too. Again, we generalize the result to all multi-directed versions. We do it by providing explicit quasi-isomorphisms, see part (3) of Theorem \ref{thm:main} and Corollary \ref{cor:main}. For completeness, already known results about directed (\cite[Appendix K]{grt}) and oriented (\cite{Multi}) complexes are recalled and generalized to multi-directed case in parts (1) and (2) of Theorem \ref{thm:main}. The proof for oriented case is slightly more conceptual than in \cite{Multi}.

\begin{thm}
\label{thm:main}
For every integers $j,k,\ell\geq 0$ and $n\in\Z$
\begin{enumerate}
\item there is a quasi-isomorphism $g:\mO_j\mS_k\mD_\ell\GC_n\rightarrow\mO_j\mS_k\mD_{l+1}\GC_n$,
\item there is a quasi-isomorphism $h:\mO_j\mS_k\mD_\ell\GC_n\rightarrow\mO_{j+1}\mS_k\mD_\ell\GC_{n+1}$,
\item the inclusion $\mO_{j+1}\mS_k\mD_\ell\GC_{n+1}\hookrightarrow\mO_j\mS_{k+1}\mD_\ell\GC_{n+1}$ is a quasi-isomorphism.
\end{enumerate}
\end{thm}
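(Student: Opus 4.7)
Parts (1) and (2) should be obtainable by adapting the constructions of the first paper in the series \cite{Multi} to the multi-directed setting. The map $g$ of (1) formally introduces an additional direction on each edge, generalizing Willwacher's inclusion into the directed graph complex, and is shown to be a quasi-isomorphism by a spectral sequence on the number of ``bad'' edges. The map $h$ of (2) is the explicit homotopy upgrading an arbitrary directed direction to an oriented one, as already constructed in \cite{Multi}. In either case the extra indices $j,k,\ell$ merely decorate the graphs without interacting with the construction, so the same spectral sequence arguments apply verbatim, up to notational bookkeeping.

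Part (3) is the new content, and I would focus most of the effort there. Write $A=\mO_{j+1}\mS_k\mD_\ell\GC_{n+1}$ and $B=\mO_j\mS_{k+1}\mD_\ell\GC_{n+1}$; the goal is to show that the cokernel complex $C=B/A$ is acyclic, which then yields (3) via the long exact sequence of the pair. Graphs in $C$ are precisely those where, in the ``new'' direction, there is both a source $v$ and a directed cycle $\gamma$. A key structural observation is that, since sources have no incoming edges in that direction, $v$ cannot lie on any directed cycle, so the source and the cycle occupy disjoint regions of the graph.

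To prove that $C$ is acyclic, I would construct a contracting homotopy built from two local operators acting near a source: $D^{*}$ contracts an edge incident to a valence-one source (thereby eliminating that source), and $D$ creates a new valence-one source by splitting an edge or attaching a new leaf. A creation--annihilation argument of the form $dD+Dd=\mathrm{id}$, possibly after passing to the $E^{1}$ page of a filtration by the number of sources or by loop order, would yield the acyclicity. The required directed cycle is unaffected by these local operations near a source, so it persists throughout the homotopy, keeping all intermediate graphs inside $C$.

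The main obstacle I anticipate is sign bookkeeping. In graph complexes, signs accumulated from reordering vertices, edges, and multiple direction labels are notoriously delicate, and verifying that $D$ anticommutes with the multi-directed differential in the right way is the technical heart of the argument. A secondary difficulty is handling ``boundary'' graphs---those with very few vertices, or where the source sits adjacent to the cycle---where $D$ and $D^{*}$ may produce graphs that violate existing oriented or sourced conditions; these degenerate configurations can typically be dispatched by an auxiliary filtration in which they form a subcomplex whose cohomology vanishes by an induction on vertex or loop count.
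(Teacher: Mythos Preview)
For parts (1) and (2) your outline is broadly in the right spirit, though the paper is more self-contained than a simple citation of \cite{Multi}: it recasts the extra direction as an edge \emph{type} rather than an extra colour, develops a ``skeleton edge'' formalism (Subsections~\ref{ss:sgc}--\ref{ss:scwbse}), and obtains (1) as a corollary of the skeleton bound (Proposition~\ref{prop:SmallSk}). The map $h$ for (2) is rebuilt explicitly via spanning trees in Subsection~\ref{ss:tm}.

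For part (3) there is a genuine gap. Your contracting homotopy is built from operators that create or destroy \emph{valence-one} sources (attaching or contracting leaves), but the complexes $\mO_j\mS_k\mD_\ell\GC_n$ contain no $1$-valent vertices by construction: they are carved out of $\fGCc_n^{\geq 2}$ and further restricted to have no passing vertices (Subsections~\ref{ss:al2vv}--\ref{ss:npv}), and the differential is expressly set up not to contract leaves, cf.~\eqref{def:noleaves}. So the operators $D$ and $D^*$ you describe do not act on these complexes, and there is no evident replacement that runs a creation--annihilation scheme for sources while respecting the valence constraints and the ``no passing vertex'' condition.

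More importantly, the paper does \emph{not} attack the quotient $B/A$ at all. It proves (2) and (3) in one stroke: after passing to the skeleton model and bounding skeleton-edge length (Propositions~\ref{prop:SkEq} and~\ref{prop:SmallSk}), it shows that both
\[
h:\MGC_n\longrightarrow\mO^1\MGC_{n+1}\quad\text{and}\quad
\MGC_n\longrightarrow\mO^1\MGC_{n+1}\hookrightarrow\mS^2\MGC_{n+1}
\]
are quasi-isomorphisms (Proposition~\ref{prop:main}); the inclusion in (3) then follows by two-out-of-three. The core of that proof is a spectral sequence on the number of vertices, reducing to a fixed core graph $\Phi$ with only the edge differential, and then a commutative diagram (Figure~\ref{fig:cd}) linking $\langle\{\Phi\}\rangle$, $\mO\Phi$, $\mS\Phi$ to auxiliary one-dimensional complexes $\mO\Phi^{v-1}$ and $A^{e-v+1}$ via Lemmas~\ref{lem:QIf}--\ref{lem:QIpi}. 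The decisive structural move---fixing a spanning tree and converting its edges one at a time to an inert ``thick'' type $\ET$, thereby collapsing the oriented and sourced conditions to trivially comparable complexes---has no analogue in your proposal.
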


\begin{cor}
\label{cor:main}
For every integers $j,k,\ell\geq 0$ and $n\in\Z$ there is a quasi-isomorphism
$$
\GC_n\rightarrow\mO_j\mS_k\mD_\ell\GC_{n+j+k}.
$$
\end{cor}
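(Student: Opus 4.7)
The corollary is a direct consequence of Theorem \ref{thm:main} obtained by chaining the three elementary quasi-isomorphisms. Since $\GC_n$ is by definition $\mO_0\mS_0\mD_0\GC_n$, and a composition of quasi-isomorphisms is a quasi-isomorphism, it suffices to exhibit a path from the index tuple $(0,0,0,n)$ to $(j,k,\ell,n+j+k)$ in which every step is of type (1), (2) or (3).

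The plan is as follows. First, iterate the map $h$ from part (2) exactly $j+k$ times. Each application increases the oriented index by $1$ and shifts the degree by $1$, producing the chain
$$\mO_0\mS_0\mD_0\GC_n \xrightarrow{h} \mO_1\mS_0\mD_0\GC_{n+1} \xrightarrow{h} \cdots \xrightarrow{h} \mO_{j+k}\mS_0\mD_0\GC_{n+j+k},$$
so that the full degree shift of $j+k$ is accrued in this step. Next, iterate the inclusion of part (3) exactly $k$ times, each step trading one oriented direction for a sourced one without altering the degree, yielding
$$\mO_{j+k}\mS_0\mD_0\GC_{n+j+k} \hookrightarrow \mO_{j+k-1}\mS_1\mD_0\GC_{n+j+k} \hookrightarrow \cdots \hookrightarrow \mO_j\mS_k\mD_0\GC_{n+j+k}.$$
Finally, iterate $g$ from part (1) exactly $\ell$ times to introduce the required directed directions, landing at $\mO_j\mS_k\mD_\ell\GC_{n+j+k}$. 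The composition of these three sequences is the desired quasi-isomorphism.

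There is no real obstacle here beyond Theorem \ref{thm:main} itself; the corollary is essentially a bookkeeping observation. The only point worth flagging is that the degree shift is $j+k$ rather than $j$: each sourced direction has to be manufactured indirectly, by first producing an oriented direction via $h$ (which costs one unit of degree) and only then converting it via the inclusion of part (3) (which is free). Consequently, $h$ must be applied $j+k$ times, not merely $j$ times, which explains the grading appearing in the target complex.
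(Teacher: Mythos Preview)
Your argument is correct and is exactly the intended derivation: the paper states the corollary without proof, since it follows immediately from Theorem~\ref{thm:main} by composing the elementary quasi-isomorphisms in the way you describe. Your remark explaining why the degree shift is $j+k$ rather than $j$ is a nice clarification of the only subtle point.
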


Among other things, Kontsevich's graph complex $\GC_2$ acts on multi-vector fields $T_{\rm poly}$. Homology of $\GC_2$ captures all the Lie algebra homology of $T_{\rm poly}$ in the stable limit, see \cite{Poly}, c.f.\ \cite{Kont1}. The sourced graph complex similarly acts on the multi-vector fields $T_{\rm poly}^{\geq 1}$ of degree at least one, i.e.\ there is at least one vector in the multi-vector. It is expected that its homology similarly captures all the Lie algebra homology of $T_{\rm poly}^{\geq 1}$. It has not yet been studied since we did not know the homology of sourced graph complex, but our result may be a motivation to write it down.

Another use of sourced graph complexes is giving the universal algebraic structures on the hairy graph complex. These complexes compute the rational homotopy of the spaces of embeddings of disks modulo immersions, fixed at the boundary, $\overline{\mathrm{Emb}_\partial}(\mathbb{D}^m,\mathbb{D}^n)$, provided that $n-m\geq 3$, cf. \cite{FTW, AT, Tur1,DGC2}.
In this context, Willwacher conjectured our result in \cite{Will}.\footnote{This paper mentions a graph complex with at lest one sink. However, sunk and sourced graph complexes are the same, just with reversed directions.} Therefore, his Theorem 1.4 can be extended to sourced graph complex.

On the other side, multi-oriented graph complexes play a key role in the deformation theory of multi-oriented props recently invented by Sergei Merkulov in \cite{Merk1} and to appear in \cite{Merk2}.
%Similarly to multi-oriented graphs, multi-oriented props have multiple directions on each edge, without loops. Props naturally have one basic direction that goes from the inputs to the outputs, and that direction is clearly without loops, see e.g.\ \cite{MV}.
In his papers Merkulov gives a meaning to the extra directions, and provides interesting applications and representations of multi-oriented props.

To make the arguments smoother, in Section \ref{s:gc} we develop a theory of graph complexes with more edge types modelled by arbitrary dg $S_2$-module $\Sigma$. The theory may have broader applications.

In the proof of Theorem \ref{thm:main} we immediately prove the most general result for arbitrary $j$, $k$ and $\ell$. If the reader is not interested in multi-directed case, he can easily read the paper assuming $j=k=\ell=0$ and skip some thoughts and imagination about extra directions.

\subsection*{Outline of the paper}

There are many technical results in the paper, especially concerning signs. Therefore it is useful to give briefly the ideas and strategy of proving.

In Section \ref{s:gc} we define complexes used in this paper. The definition is similar to the standard definition of graph complexes with edge contraction as a differential, but there are two specialities.

Firstly, we allow edges to have more types. \emph{Core graphs} defined in Subsection \ref{ss:cg} are used as frames to attach edge types in Subsection \ref{ss:sog}. There is another differential defined in Subsection \ref{ss:ed}, called \emph{edge differential}, that only changes the type of an edge, without changing the core graph. Simple spectral sequence on the number of vertices leaves this differential as the first differential. On this first page, since the edge differential does not change the number of edges and vertices, the homology commutes with permuting them. This facts are helpful in many proofs.

The second speciality is introducing multiple directions to the core graphs in Subsection \ref{ss:mdg}. Directions are labelled by colours, and every edge has defined direction in every colour. 
In Subsection \ref{ss:saog} we define oriented and sourced graphs. A graph is \emph{oriented} in a particular colour if there is no loop along edges in that colour, and it is \emph{sourced} if there is at least one vertex (source) with all its adjacent edges having direction in that colour away of the vertex.
The last subsection, \ref{ss:scwed}, defines the \emph{$j$-oriented $k$-sourced $\ell$-directed full graph complex} $\mO_j\mS_k\mD_\ell\fGC_n$ that has one type of edges. For $j=k=
\ell=0$ it is the well known Kontsevich's full graph complex.
%These complexes, with some not essential conditions introduced in the following section, are subject of Theorem \ref{thm:main}.

%Other subsections of Section \ref{s:gc} are more standard. In Subsections \ref{ss:pe} and \ref{ss:pv} we permute edges and vertices, while Subsections \ref{ss:ec} and \ref{ss:tcd} introduce the standard edge contraction differential. The last subsection, \ref{ss:scwed}, defines simplest complexes used in the paper. One needs to be careful with signs in all definitions, in order to make sure that everything in well defined in respect to the permutations.

Section \ref{s:Simpl} simplifies complexes to prepare for the main proof of Theorem \ref{thm:main} in Section \ref{s:qi}. Recall \cite[Proposition 3.4]{grt}: it says that homology is essentially not changed if we disallow 1-valent and 2-valent vertices in Kontsevich's graph complex. Subsections \ref{ss:al2vv} and \ref{ss:npv} give the analogous result for $\mO_j\mS_k\mD_\ell\fGC_n$, and introduce the standard versions $\mO_j\mS_k\mD_\ell\GC_n$ that are subject of Theorem \ref{thm:main}. Here, instead of just 2-valent vertices, we disallow \emph{passing vertices}, i.e.\ 2-valent vertices that have an income and outcome in every colour.

In Subsection \ref{ss:gcwsd} we introduce an equivalent view of a coloured direction. Instead of using one colour, we can model it with edge types $\Ed$ and $\dE$, that have \emph{type direction}. We will use this view on the complexes $\mO_j\mS_k\mD_{\ell+1}\GC_n$, $\mO_{j+1}\mS_k\mD_\ell\GC_{n+1}$ and $\mO_j\mS_{k+1}\mD_\ell\GC_{n+1}$ from Theorem \ref{thm:main}. Like this, all complexes mentioned in the theorem have the same number of colours, and hence the same core graphs, with the only difference being in the edge types.

Further equivalent view of graph complexes with type direction is introduced in Subsection \ref{ss:sgc}. Here we fully use the theory of arbitrary edge types. Vertices that are not passing in remaining colours (in the simplest case $j=k=\ell=0$ that are more than 2-valent vertices) are called \emph{skeleton edges}. Strings of edges and passing vertices between skeleton vertices form a \emph{skeleton edge}. A graph is equivalently seen as a graph with skeleton vertices as vertices, and skeleton edges as edges, with an edge differential well adjusted.

Subsection \ref{ss:scwbse} removes unnecessarily long skeleton edges in skeleton graphs. The result is again of the similar nature as \cite[Proposition 3.4]{grt} where skeleton edges longer than 1 are removed. We do the same in the case of directed complex and prove the first part of Theorem \ref{thm:main} already here. In the other cases we can not go down to the length 1 because some low-length skeleton edges are needed to define oriented and sourced complexes. Oriented and sourced complexes with shortest possible edges are $\mO^1\mO_j\mS_k\mD_\ell\GC_{n+1}$ and $\mS^2\mO_j\mS_k\mD_\ell\GC_{n+1}$ respectively. A part of the main idea of this paper is handling these low-length skeleton edges.

In the last, Section \ref{s:qi}, we first define the map $h:\mO_j\mS_k\mD_\ell\GC_n\rightarrow\mO^1\mO_j\mS_k\mD_\ell\GC_{n+1}$ in Subsection \ref{ss:tm} as follows:
$$
h(\Gamma):=\sum_{x\in V(\Gamma)}(v(x)-2)\sum_{\tau\in S(\Gamma)}h_{x,\tau}(\Gamma),
$$
where sums go through all vertices $x$ of $\Gamma$ and all spanning trees $\tau$ of $\Gamma$, and $v(x)$ is the valence of $x$. The graph $h_{x,\tau}(\Gamma)$ is the graph with the same core graph as $\Gamma$, edges of $\tau$ having type $\Ed$ away from the vertex $x$, and other edges having type $\EdE-\dEd$. This construction clearly ensures that the resulting graph is oriented.
Much of the subsection deals with defining correct signs such that the map is well defined on complexes with permuted edges and vertices, and such that it commutes with the differential.
A ``conference version'' of this definition that skips details on signs could be much shorter.

Finally, in Subsection \ref{ss:tp} we show that the maps
$$
h:\mO_j\mS_k\mD_\ell\GC_n\rightarrow
\mO^1\mO_j\mS_k\mD_\ell\GC_{n+1}\hookrightarrow
\mS^2\mO_j\mS_k\mD_\ell\GC_{n+1}
$$
are quasi-isomorphisms, finishing the proof of Theorem \ref{thm:main}. Already mentioned spectral sequence on the number of vertices and the fact that the edge differential commutes with permuting edges and vertices reduces the claim to
$$
h:\left\langle\{\Phi\}\right\rangle\rightarrow
\mO\Phi\hookrightarrow
\mS\Phi
$$
being quasi-isomorphisms for every core graph $\Phi$. Here, $\left\langle\{\Phi\}\right\rangle$ is one-dimensional complex spanned by core graph $\Phi$, $\mO\Phi$ is sub-complex of $\mO^1\mO_j\mS_k\mD_\ell\GC_{n+1}$ with $\Phi$ as a core graph and before all permutations (distinguishable edges, vertices and edge directions), and $\mS\Phi$ is sub-complex of $\mS^2\mO_j\mS_k\mD_\ell\GC_{n+1}$ with $\Phi$ as a core graph and before all permutations. Latter two complexes are more than one-dimensional, but we introduce auxiliary one-dimensional complexes $\mO\Phi^{v-1}$ and $A^{e-v+1}$, and maps as on the following commutative diagram.
$$
\begin{tikzcd}
\left\langle\{\Phi\}\right\rangle
\arrow{r}{h}
\arrow[swap]{dr}{}
& \mO\Phi
\arrow[hookrightarrow]{r}{}
\arrow{d}{f}
& \mS\Phi
\arrow{d}{g} \\
& \mO\Phi^{v-1}
\arrow[hookrightarrow]{r}{}
\arrow[swap]{dr}{}
& \mS\Phi^{v-1}
\arrow{d}{p} \\
& & A^{e-v+1}
\end{tikzcd}
$$
The proof is finished by proving that vertical and diagonal maps are quasi-isomorphisms, implying that all other maps are quasi-isomorphisms too. Since diagonal complexes are one-dimensional, diagonal maps are quasi-isomorphisms if they are not zero. That would be trivial to show if, again, we did not have to check the signs.

\subsection*{Acknowledgements}

I thank Sergei Merkulov and Thomas Willwacher for providing the motivation for this work and some fruitful discussions.

\section{Graph complexes}
\label{s:gc}

In this section we make the set up for general definition of graph complexes with more than one type of edges. In the last subsection we particularly define Kontsevich's graph complex and graph complex $\mO_j\mS_k\mD_\ell\fGC_n$ for $n\in\Z$, $j,k,\ell\geq 0$, called \emph{$j$-oriented $k$-sourced $\ell$-directed graph complex}.

We work over a field $\K$ of characteristic zero. All vector spaces and differential graded vector spaces are assumed to be $\K$-vector spaces.
For a set $A$, the vector space of $\K$-linear combinations of elements of $A$ is denoted by $\langle A \rangle$.

\subsection{Core graphs}
\label{ss:cg}

\begin{defi}[Core graph]
Let $v>0$ and $e\geq 0$ be integers.
Let $V:=\{1,2,\dots,v\}$ be set of vertices and $E:=\{1,2,\dots,e\}$ be set of edges.

A \emph{core graph} $\Gamma$ with $v$ vertices and $e$ edges is a map $\Gamma=(\Gamma_-,\Gamma_+):E\rightarrow V^2$.

We say that an edge $a\in E$ connects vertices $\Gamma_-(a)$ and $\Gamma_+(a)$. The direction from $\Gamma_-(a)$ to $\Gamma_+(a)$ is called the \emph{core direction} of the edge $a$.

By $\bar\mV_v\bar\mE_e\gra$ we denote the set of all core graphs with $v$ vertices and $e$ edges.
We set $\bar\mV_v\bar\mE_e\Gra:=\langle\bar\mV_v\bar\mE_e\gra\rangle$ and
\begin{equation}
\bar\mV\bar\mE\gra:=\bigcup_{v,e}\bar\mV_v\bar\mE_e\gra,
\end{equation}
\begin{equation}
\bar\mV\bar\mE\Gra:=\bigoplus_{v,e}\bar\mV_v\bar\mE_e\Gra.
\end{equation}
\end{defi}

The bar on $\mV$ and $\mE$ stands for distinguishable vertices, respectively edges. It will disappear for spaces defined after permuting edges and vertices in Subsections \ref{ss:pe} and \ref{ss:pv}.

An example of core graph is drawn in Figure \ref{fig:coregraph}.

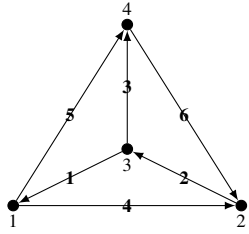
\begin{figure}[H]
\centering
\begin{tikzpicture}[baseline=1ex,scale=1.5]
 \node[int] (a) at (-1,-.5) {};
 \node[int] (b) at (1,-.5) {};
 \node[int] (c) at (0,0) {};
 \node[int] (d) at (0,1.1) {};
 \node[below] at (a) {$\scriptstyle 1$};
 \node[below] at (b) {$\scriptstyle 2$};
 \node[below] at (c) {$\scriptstyle 3$};
 \node[above] at (d) {$\scriptstyle 4$};
 \draw (a) edge[-latex] node {$\scriptstyle {\mathbf 4}$} (b);
 \draw (a) edge[latex-] node {$\scriptstyle {\mathbf 1}$} (c);
 \draw (a) edge[-latex] node {$\scriptstyle {\mathbf 5}$} (d);
 \draw (b) edge[-latex] node {$\scriptstyle {\mathbf 2}$} (c);
 \draw (b) edge[latex-] node {$\scriptstyle {\mathbf 6}$} (d);
 \draw (c) edge[-latex] node {$\scriptstyle {\mathbf 3}$} (d);
\end{tikzpicture}
\caption{\label{fig:coregraph}
An example of a core graph. The tick arrows are used to depict the direction of edges. Vertices and edges are labelled.}
\end{figure}

We often do not consider all core graphs, but only core graphs that satisfy some extra conditions. Here are some of them.

\begin{defi}
Let $\Gamma\in\bar\mV_v\bar\mE_e\gra$, $V$ its set of vertices and $E$ its set of edges.

An edge $a\in E$ is a \emph{tadpole} if $\Gamma_-(a)=\Gamma_+(a)$.

A \emph{valence} of vertex $x\in V$ is the number of edges $a\in E$ such that $\Gamma_-(a)=x$ plus the number of edges $a\in E$ such that $\Gamma_+(a)=x$.

For $a\in E$ we say that vertices $\Gamma_-(a)$ and $\Gamma_+(a)$ are connected. We extend the notion of being connected by transitivity, such that it is a relation of equivalence. Equivalence classes are called \emph{connected components}. A core graph is \emph{connected} if it has one connected component and \emph{disconnected} if it has more than one connected component.

Let
\begin{equation}
\bar\mV_v\bar\mE_e\gra^\notadp
\end{equation}
be the set of all core graphs with $v$ vertices ans $e$ edges without tadpoles.
Let
\begin{equation}
\bar\mV_v^{\geq i}\bar\mE_e\gra
\end{equation}
be the set of core graphs with $v$ vertices and $e$ edges, whose vertices are at least $i$-valent. Also, let
\begin{equation}
\bar\mV_v\bar\mE_e\grac
\end{equation}
be the set of all connected core graphs with $v$ vertices and $e$ edges.
Similarly, we define $\bar\mV_v^{\geq i}\bar\mE_e\grac$, $\bar\mV_v^{\geq i}\bar\mE_e\gra^\notadp$, etc.
\end{defi}

With the same extra notation we define corresponding spaces of core graphs and spaces we are going to define later, e.g.\ $\bar\mV_v^{\geq i}\bar\mE_e\Gra$, $\bar\mV_v\bar\mE_e\Grac$ etc.
We are also going to introduce another conditions that will be denoted by adding extra notation.

\subsection{Space of graphs}
\label{ss:sog}

For an edge $a\in E$ we define the map $i_a:\bar\mV_v\bar\mE_e\gra\rightarrow\bar\mV_v\bar\mE_e\gra$ as follows.
\begin{equation}
(i_a\Gamma)(b)=((i_a\Gamma)_-(b),(i_a\Gamma)_+)(b)
\left\{
\begin{array}{ll}
\Gamma(b)=(\Gamma_-(b),\Gamma_+(b))
\qquad&\text{for $b\neq a$,}\\
(\Gamma_+(b),\Gamma_-(b))
\qquad&\text{for $b=a$.}
\end{array}
\right.
\end{equation}
So, $i_a$ reverts the direction of the edge $a$. It is clearly an involution (inverse of itself) and defines an action of $S_2$ on $\bar\mV_v\bar\mE_e\gra$, where a non-trivial element $\chi\in S_2$ acts like $i_a$. All $e$ actions for edges in $E$ define an action of $S_2^{\times e}$ on $\bar\mV_v\bar\mE_e\gra$. The action is by linearity extended to $\bar\mV_v\bar\mE_e\Gra$.

Our goal is to define more types of edges. We model them with an $\langle S_2\rangle$ module $\Sigma$, that is a $\K$-vector space $\Sigma$ together with an involution. The tensor product $\Sigma^{\otimes e}$ has a natural action of $S_2^{\times e}$.

\begin{defi}[Space of graphs]
Let $v>0$ and $e\geq 0$ be integers and $\Sigma$ be an $\langle S_2\rangle$ module.
The \emph{space of graphs} with $v$ vertices and $e$ edges of types in $\Sigma$ is
\begin{equation}
\bar\mV_v\bar\mE_e^\Sigma\Gra:=\bar\mV_v\bar\mE_e\Gra\otimes_{S_2^{\times e}}\Sigma^{\otimes e}=
\left(\bar\mV_v\bar\mE_e\Gra\otimes\Sigma^{\otimes e}\right)_{S_2^{\times e}}.
\end{equation}
Also, let
\begin{equation}
\bar\mV\bar\mE^\Sigma\Gra:=\bigoplus_{v,e}\bar\mV_v\bar\mE_e^\Sigma\Gra.
\end{equation}
\end{defi}

The group in the subscript means taking coinvariants of its action on both sides. It means that we identify $\Gamma\otimes(\alpha_1\otimes\dots\otimes\alpha_e)$ with $i_a\Gamma\otimes(\alpha_1\otimes\dots\otimes\chi(\alpha_a)\otimes\dots\otimes\alpha_e)$.

\begin{rem}
In $\bar\mV_v\bar\mE_e^\Sigma\Gra^\notadp$ we can get rid of taking coinvariants of the action of group $S_2^{\times e}$ by choosing a particular standard directions of edges. Let $\bar\mV_v\bar\mE_e\sgra^\notadp\subset\bar\mV_v\bar\mE_e\mE_e\gra^\notadp$ be the subset of core graphs $\Gamma$ such that $\Gamma_-(a)<\Gamma_+(a)$ for every edge $a$, called \emph{standard core graphs}, and let $\bar\mV_v\bar\mE_e\sGra^\notadp=\langle\bar\mV_v\bar\mE_e\sgra^\notadp\rangle$.
Then
$$
\bar\mV_v\bar\mE_e^\Sigma\Gra^\notadp=
\bar\mV_v\bar\mE_e\Gra^\notadp\otimes_{S_2^{\times e}}\Sigma^{\otimes e}=
\bar\mV_v\bar\mE_e\sGra^\notadp\otimes\Sigma^{\otimes e}.
$$
Somewhat more complicated analogous expression for $\bar\mV_v\bar\mE_e\bar\mV\bar\mE^\Sigma\Gra$ where tadpoles are allowed is left to the reader.
\end{rem}

\begin{ex}
\label{ex:s2mod}
Here we list three simplest $\langle S_2\rangle$ modules.

There are two 1-dimensional $\langle S_2\rangle$ modules, one where non-trivial $\chi\in S_2$ acts as an identity, and the other where it multiplies by $-1$. The first one we denote by $\Sigma^+$ and the latter one by $\Sigma^-$.

The space $\bar\mV_v\bar\mE_e^{\Sigma^+}\Gra$, also denoted simpler by $\bar\mV_v\bar\mE_e^+\Gra$, is the space $\bar\mV_v\bar\mE_e\Gra$ where we identify $\Gamma$ with $i_a(\Gamma)$ for every $a\in E$. That means that a graph is identified with the graph with reverted edge, so edges are essentially not directed.
In drawings of graphs we usually depict this kind of edges with single line without an arrow such as lines in the following graph. We skip labels of vertices and edges for simplicity.
$$
\begin{tikzpicture}[baseline=1ex]
 \node[int] (a) at (-1,-.5) {};
 \node[int] (b) at (1,-.5) {};
 \node[int] (c) at (0,0) {};
 \node[int] (d) at (0,1.1) {};
 \draw (a) edge (c);
 \draw (b) edge (c);
 \draw (c) edge (d);
 \draw (a) edge (b);
 \draw (a) edge (d);
 \draw (b) edge (d);
\end{tikzpicture}
$$

The space $\bar\mV_v\bar\mE_e^{\Sigma^-}\Gra$, also denoted simpler by $\bar\mV_v\bar\mE_e^-\Gra$, is the space $\bar\mV_v\bar\mE_e\Gra$ where we identify $\Gamma$ with $-i_a(\Gamma)$ for every $a\in E$. That means that a graph is identified with the negative of the graph with reverted edge.
In drawings of graphs we usually depict this kind of edges with a line with a simple arrow at one end, such as lines in the following graphs.
$$
\begin{tikzpicture}[baseline=1ex]
 \node[int] (a) at (-1,-.5) {};
 \node[int] (b) at (1,-.5) {};
 \node[int] (c) at (0,0) {};
 \node[int] (d) at (0,1.1) {};
 \draw (a) edge[<-] (c);
 \draw (b) edge[->] (c);
 \draw (c) edge[<-] (d);
 \draw (a) edge[->] (b);
 \draw (a) edge[->] (d);
 \draw (b) edge[<-] (d);
\end{tikzpicture}
=
-
\begin{tikzpicture}[baseline=1ex]
 \node[int] (a) at (-1,-.5) {};
 \node[int] (b) at (1,-.5) {};
 \node[int] (c) at (0,0) {};
 \node[int] (d) at (0,1.1) {};
 \draw (a) edge[<-] (c);
 \draw (b) edge[->] (c);
 \draw (c) edge[<-] (d);
 \draw (a) edge[->] (b);
 \draw (a) edge[->] (d);
 \draw (b) edge[->] (d);
\end{tikzpicture}
=\dots
$$

A simple 2-dimensional $\langle S_2\rangle$ module is $\Sigma^{fix}=\langle\Ed,\dE\rangle$, where $\chi(\Ed)=\dE$ and vice-versa. The space $\bar\mV_v\bar\mE_e^{\Sigma^{fix}}\Gra$ is isomorphic to the space of core graphs $\bar\mV_v\bar\mE_e\Gra$, so edges are essentially directed.
In drawings of graphs we usually depict this kind of edges with a line with a thick arrow like the core graphs, such as lines in the following graph.
$$
\begin{tikzpicture}[baseline=1ex]
 \node[int] (a) at (-1,-.5) {};
 \node[int] (b) at (1,-.5) {};
 \node[int] (c) at (0,0) {};
 \node[int] (d) at (0,1.1) {};
 \draw (a) edge[latex-] (c);
 \draw (b) edge[-latex] (c);
 \draw (c) edge[latex-] (d);
 \draw (a) edge[-latex] (b);
 \draw (a) edge[-latex] (d);
 \draw (b) edge[latex-] (d);
\end{tikzpicture}
$$
\end{ex}

\subsection{Base graphs}
\label{ss:bg}
A simple study of $\langle S_2\rangle$ modules imply that every such module is a direct sum of modules isomorphic to $\Sigma^+$, $\Sigma^-$ and $\Sigma^{fix}$ from Example \ref{ex:s2mod}\footnote{Indeed $\Sigma^{fix}=\Sigma^+\oplus\Sigma^-$, so every such module is a direct sum of only $\Sigma^+$ and $\Sigma^-$. But we are often interested in $\Sigma^{fix}$ as a whole.}. In the other words, every $\langle S_2\rangle$ module $\Sigma$ has a basis $\sigma$ such that for every $\alpha\in\sigma$ its reverse $\chi(\alpha)$ is another basis element, or its negative. We will always try to use this kind of basis.

Let $\sigma$ be this kind of basis of $\Sigma$. Then $\bar\mV_v\bar\mE_e\gra\times\sigma^{\times e}$ is the basis of $\bar\mV_v\bar\mE_e\Gra\otimes\Sigma^{\otimes e}$. However, the set of classes
\begin{equation}
\label{eq:gen1}
\bar\mV_v\bar\mE_e^\sigma\gra:=\{[\Gamma]|\Gamma\in\bar\mV_v\bar\mE_e\gra\times\sigma^{\times e}\}
\end{equation}
under the action of $S_2^{\times e}$ is not the basis of $\bar\mV_v\bar\mE_e^\Sigma\Gra=\bar\mV_v\bar\mE_e\Gra\otimes_{S_2^{\times e}}\Sigma^{\otimes e}$ because it can be linearly dependant, but it still generates $\bar\mV_v\bar\mE_e^\Sigma\Gra$. Let also
\begin{equation}
\label{eq:gen2}
\bar\mV\bar\mE^\sigma\gra:=\bigcup_{v,e}\bar\mV_v\bar\mE_e^\sigma\gra.
\end{equation}

\begin{defi}
Elements of $\bar\mV\bar\mE^\sigma\gra$ are called \emph{base graphs}. Elements of $\bar\mV_v\bar\mE_e\gra\times\sigma^{\times e}$ are called \emph{base graph representatives}.
\end{defi}

Since the set of base graphs $\bar\mV\bar\mE^\sigma\gra$ generates the space of graphs $\bar\mV_v\bar\mE_e^\Sigma\Gra$ we will write graphs as linear combination of base graphs. Clearly, the expression may not be unique. Figure \ref{fig:basegraph} shows an example of a base graph, and the way how to write it in more compact way.

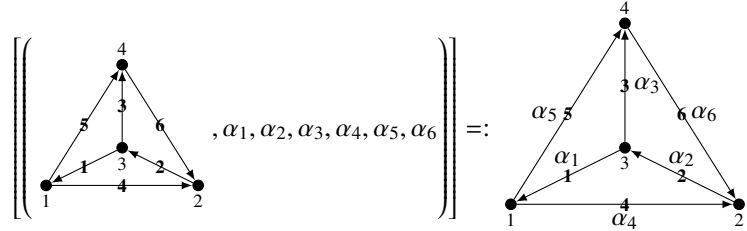
\begin{figure}[H]
\centering
$$
\left[\left(
\begin{tikzpicture}[baseline=1ex]
 \node[int] (a) at (-1,-.5) {};
 \node[int] (b) at (1,-.5) {};
 \node[int] (c) at (0,0) {};
 \node[int] (d) at (0,1.1) {};
 \node[below] at (a) {$\scriptstyle 1$};
 \node[below] at (b) {$\scriptstyle 2$};
 \node[below] at (c) {$\scriptstyle 3$};
 \node[above] at (d) {$\scriptstyle 4$};
 \draw (a) edge[-latex] node {$\scriptstyle {\mathbf 4}$} (b);
 \draw (a) edge[latex-] node {$\scriptstyle {\mathbf 1}$} (c);
 \draw (a) edge[-latex] node {$\scriptstyle {\mathbf 5}$} (d);
 \draw (b) edge[-latex] node {$\scriptstyle {\mathbf 2}$} (c);
 \draw (b) edge[latex-] node {$\scriptstyle {\mathbf 6}$} (d);
 \draw (c) edge[-latex] node {$\scriptstyle {\mathbf 3}$} (d);
\end{tikzpicture}
,\alpha_1,\alpha_2,\alpha_3,\alpha_4,\alpha_5,\alpha_6\right)\right]
=:
\begin{tikzpicture}[baseline=1ex,scale=1.5]
 \node[int] (a) at (-1,-.5) {};
 \node[int] (b) at (1,-.5) {};
 \node[int] (c) at (0,0) {};
 \node[int] (d) at (0,1.1) {};
 \node[below] at (a) {$\scriptstyle 1$};
 \node[below] at (b) {$\scriptstyle 2$};
 \node[below] at (c) {$\scriptstyle 3$};
 \node[above] at (d) {$\scriptstyle 4$};
 \draw (a) edge[-latex] node {$\scriptstyle {\mathbf 4}$} node [below] {$\alpha_4$} (b);
 \draw (a) edge[latex-] node {$\scriptstyle {\mathbf 1}$} node [above] {$\alpha_1$} (c);
 \draw (a) edge[-latex] node {$\scriptstyle {\mathbf 5}$} node [left] {$\alpha_5$} (d);
 \draw (b) edge[-latex] node {$\scriptstyle {\mathbf 2}$} node [above] {$\alpha_2$} (c);
 \draw (b) edge[latex-] node {$\scriptstyle {\mathbf 6}$} node [right] {$\alpha_6$} (d);
 \draw (c) edge[-latex] node {$\scriptstyle {\mathbf 3}$} node [right] {$\alpha_3$} (d);
\end{tikzpicture}
$$
\caption{\label{fig:basegraph}
An example of a base graph. We can write the type of an edge next to its label on the picture ans skip the brackets.}
\end{figure}

Relations on base graph representatives are
\begin{equation}
\label{eq:BaseRel}
\Gamma\otimes(\alpha_1\otimes\dots\otimes\dots\otimes\alpha_e)\sim i_a\Gamma\otimes(\alpha_1\otimes\dots\otimes\chi(\alpha_a)\otimes\dots\otimes\alpha_e)
\end{equation}
for any choice of $\Gamma$, $\alpha_i$ and $a$. One special consequence is that if $a$ is a tadpole and $\chi(\alpha_a)=-\alpha_a$ then
\begin{equation}
\Gamma\otimes(\alpha_1\otimes\dots\otimes\alpha_e)\sim 0.
\end{equation}

To simplify the drawings, elements of $\sigma$ are usually depicted by actual lines with arrows and decorations:
\begin{itemize}
\item If $\chi(\alpha)=\alpha$ we use lines with symmetric decorations such as 
\begin{tikzpicture}[baseline=-.8ex,scale=.5]
\node[nil] (a) at (0,0) {};
\node[nil] (b) at (1,0) {};
\draw (a) edge (b);
\end{tikzpicture} and
\begin{tikzpicture}[baseline=-.8ex,scale=.5]
\node[nil] (a) at (0,0) {};
\node[nil] (b) at (1,0) {};
\draw (a) edge[crossed] (b);
\end{tikzpicture}.
\item If $\chi(\alpha)=-\alpha$ we use lines with a simple arrow on one end and otherwise symmetric decorations such as
\begin{tikzpicture}[baseline=-.8ex,scale=.5]
\node[nil] (a) at (0,0) {};
\node[nil] (b) at (1,0) {};
\draw (a) edge[->] (b);
\end{tikzpicture} and
\begin{tikzpicture}[baseline=-.8ex,scale=.5]
\node[nil] (a) at (0,0) {};
\node[nil] (b) at (1,0) {};
\draw (a) edge[->,crossed] (b);
\end{tikzpicture},
where $-\alpha$ is depicted by the same line with the arrow on the other side.
\item If $\chi(\alpha)=\beta\neq\pm\alpha$, for $\alpha$ we use line with thick arrow at one end, and some other decorations such as
\begin{tikzpicture}[baseline=-.8ex,scale=.5]
\node[nil] (a) at (0,0) {};
\node[nil] (b) at (1,0) {};
\draw (a) edge[-latex] (b);
\end{tikzpicture} and
\begin{tikzpicture}[baseline=-.8ex,scale=.5]
\node[nil] (a) at (0,0) {};
\node[nil] (b) at (1,0) {};
\draw (a) edge[-latex,crossed] (b);
\end{tikzpicture},
while $\beta$ is depicted with the opposite line.
\end{itemize}
It is now easy to draw a base graph by using such lines instead of writing an edge type next to the label. A line is drawn in place of the core edge such that the core direction goes from left to right, like on Figure \ref{fig:basegraph2}.
The drawings are consistent to those in Example \ref{ex:s2mod}.

\begin{figure}[H]
\centering
$$
\left[\left(
\begin{tikzpicture}[baseline=1ex]
 \node[int] (a) at (-1,-.5) {};
 \node[int] (b) at (1,-.5) {};
 \node[int] (c) at (0,0) {};
 \node[int] (d) at (0,1.1) {};
 \node[below] at (a) {$\scriptstyle 1$};
 \node[below] at (b) {$\scriptstyle 2$};
 \node[below] at (c) {$\scriptstyle 3$};
 \node[above] at (d) {$\scriptstyle 4$};
 \draw (a) edge[-latex] node {$\scriptstyle {\mathbf 4}$} (b);
 \draw (a) edge[latex-] node {$\scriptstyle {\mathbf 1}$} (c);
 \draw (a) edge[-latex] node {$\scriptstyle {\mathbf 5}$} (d);
 \draw (b) edge[-latex] node {$\scriptstyle {\mathbf 2}$} (c);
 \draw (b) edge[latex-] node {$\scriptstyle {\mathbf 6}$} (d);
 \draw (c) edge[-latex] node {$\scriptstyle {\mathbf 3}$} (d);
\end{tikzpicture},
\begin{tikzpicture}[baseline=-.8ex,scale=.5]
\node[nil] (a) at (0,0) {};
\node[nil] (b) at (1,0) {};
\draw (a) edge (b);
\end{tikzpicture},
\begin{tikzpicture}[baseline=-.8ex,scale=.5]
\node[nil] (a) at (0,0) {};
\node[nil] (b) at (1,0) {};
\draw (a) edge[crossed] (b);
\end{tikzpicture},
\begin{tikzpicture}[baseline=-.8ex,scale=.5]
\node[nil] (a) at (0,0) {};
\node[nil] (b) at (1,0) {};
\draw (a) edge[->] (b);
\end{tikzpicture},
\begin{tikzpicture}[baseline=-.8ex,scale=.5]
\node[nil] (a) at (0,0) {};
\node[nil] (b) at (1,0) {};
\draw (a) edge[->,crossed] (b);
\end{tikzpicture},
\begin{tikzpicture}[baseline=-.8ex,scale=.5]
\node[nil] (a) at (0,0) {};
\node[nil] (b) at (1,0) {};
\draw (a) edge[latex-] (b);
\end{tikzpicture},
\begin{tikzpicture}[baseline=-.8ex,scale=.5]
\node[nil] (a) at (0,0) {};
\node[nil] (b) at (1,0) {};
\draw (a) edge[-latex,crossed] (b);
\end{tikzpicture}
\right)\right]
=:
\begin{tikzpicture}[baseline=1ex,scale=1.5]
 \node[int] (a) at (-1,-.5) {};
 \node[int] (b) at (1,-.5) {};
 \node[int] (c) at (0,0) {};
 \node[int] (d) at (0,1.1) {};
 \node[below] at (a) {$\scriptstyle 1$};
 \node[below] at (b) {$\scriptstyle 2$};
 \node[below] at (c) {$\scriptstyle 3$};
 \node[above] at (d) {$\scriptstyle 4$};
 \draw (a) edge[->,crossed] node [below] {$\scriptstyle {\mathbf 4}$} (b);
 \draw (a) edge node [above] {$\scriptstyle {\mathbf 1}$} (c);
 \draw (a) edge[latex-] node [left] {$\scriptstyle {\mathbf 5}$} (d);
 \draw (b) edge[crossed] node [above] {$\scriptstyle {\mathbf 2}$} (c);
 \draw (b) edge[latex-,crossed] node [right] {$\scriptstyle {\mathbf 6}$} (d);
 \draw (c) edge[->] node [right] {$\scriptstyle {\mathbf 3}$} (d);
\end{tikzpicture}
$$
\caption{\label{fig:basegraph2}
An example of a base graph with base edge types depicted with lines. Note that the notation is chosen such that $[\Gamma\otimes(\alpha_1\otimes\dots\otimes\alpha_e)]$ and $[i_a\Gamma\otimes(\alpha_1\otimes\dots\otimes\chi(\alpha_a)\otimes\dots\otimes\alpha_e)]$ are depicted the same, so we do not need to draw core direction.}
\end{figure}
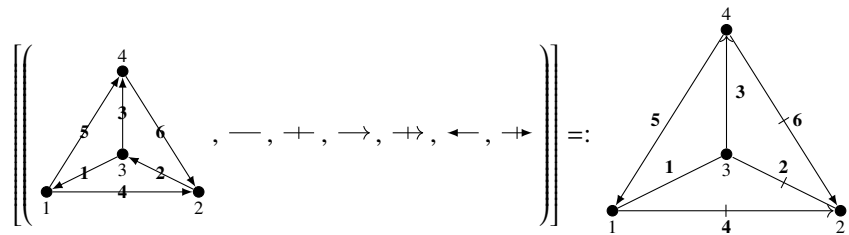

\subsection{Multi-directed graphs}
\label{ss:mdg}

\begin{defi}[Multi-directed graph]
Let $L$ be the set of ``colours'' with $|L|=\ell$. Let us pick a space of core graphs $\bar\mV_v\bar\mE_e\gra$, $V$ its set of vertices and $E$ its set of edges.

An \emph{$\ell$-directed core graph} with $v$ vertices and $e$ edges is a core graph $\Gamma\in\bar\mV_v\bar\mE_e\gra$ together with maps $o_c:E\rightarrow \{+,-\}$ for all $c\in L$.

By $\mD_\ell\bar\mV_v\bar\mE_e\gra$ we denote the set of all $\ell$-directed core graphs, with $v$ vertices and $e$ edges.
We set $\mD_\ell\bar\mV_v\bar\mE_e\Gra:=\langle\mD_\ell\bar\mV_v\bar\mE_e\gra\rangle$ and
\begin{equation}
\mD_\ell\bar\mV\bar\mE\Gra:=\bigoplus_{v,e}\mD_\ell\bar\mV_v\bar\mE_e\Gra.
\end{equation}
\end{defi}

The maps $o_c$ indicate the coloured directions. If it is $o_c(a)=+$, the coloured direction of the edge $a$ in color $c$ is the same as the core direction, and if $o_c(a)=-$ it is the opposite.

For an edge $a\in E$ the action of $i_a$ on core graphs is extended to multi-directed core graphs, where it acts like:
\begin{equation}
(i_a o_c)(b)=
\left\{
\begin{array}{ll}
o_c(a)
\qquad&\text{for $b\neq a$,}\\
-o_c(a)
\qquad&\text{for $b=a$.}
\end{array}
\right.
\end{equation}
on the maps $o_c$ for every color $c\in L$.

\begin{defi}[Space of multi-directed graphs]
Let $v>0$, $e\geq 0$ and $\ell\geq 0$ be integers and $\Sigma$ be an $\langle S_2\rangle$ module.
The \emph{space of $\ell$-directed graphs} with $v$ vertices and $e$ edges of types in $\Sigma$ is
\begin{equation}
\mD_\ell\bar\mV_v\bar\mE_e^\Sigma\Gra:=\mD_\ell\bar\mV_v\bar\mE_e\Gra\otimes_{S_2^{\times e}}\Sigma^{\otimes e}=
\left(\mD_\ell\bar\mV_v\bar\mE_e\Gra\otimes\Sigma^{\otimes e}\right)_{S_2^{\times e}}.
\end{equation}
Also, let
\begin{equation}
\mD_\ell\bar\mV\bar\mE^\Sigma\Gra:=\bigoplus_{v,e}\mD_\ell\bar\mV_v\bar\mE_e^\Sigma\Gra.
\end{equation}
\end{defi}

In the space $\mD_\ell\bar\mV\bar\mE^\Sigma\Gra$, every edge has both a type in $\Sigma$ and coloured directions for every color in $L$.

Note that the coloured orientation is always preserved under the action of $i_a$ relative to the actual vertices. So coloured directions are essentially not revertible.
In drawings we depict this directions with thick coloured arrows over the standard presentation of an edge, such as in the following graph.
$$
\begin{tikzpicture}[baseline=-1ex,scale=1.5]
\node[int] (a) at (0,0) {};
\node[int] (b) at (90:1) {};
\node[int] (c) at (210:1) {};
\node[int] (d) at (330:1) {};
\draw (a) edge[<-,rightblue,leftred] (b);
\draw (a) edge[<-,rightblue,rightred] (c);
\draw (a) edge[<-,rightblue,leftred] (d);
\draw (b) edge[-latex,leftblue,rightred,crossed] (c);
\draw (b) edge[-latex,rightblue,leftred] (d);
\draw (c) edge[->,leftblue,leftred,crossed] (d);
\end{tikzpicture}
$$

\begin{rem}
\label{rem:ColType}
Let $\Sigma^{fix}=\langle\Ed,\dE\rangle$ be the $\langle S_2\rangle$ module from Example \ref{ex:s2mod}.
The space of $\ell$-directed graphs with edges of types in $\Sigma$, $\mD_\ell\bar\mV\bar\mE^\Sigma\Gra$ is isomorphic to the space $\bar\mV\bar\mE^{\Sigma\otimes\left(\Sigma^{fix}\right)^{\otimes\ell}}\Gra$ of graphs with edges of types in $\Sigma\otimes\left(\Sigma^{fix}\right)^{\otimes\ell}$ where $S_2$ acts on each factor in the tensor product. Here coloured directions are represented by the choice of the element of $\Sigma^{fix}$.
This is the alternative definition of the space of multi-directed graphs.
\end{rem}

\subsection{Sourced and oriented graphs}
\label{ss:saog}

\begin{defi}
\label{defi:SO}
Let $\Gamma\in\mD_\ell\bar\mV_v\bar\mE_e\gra$, $V$ its set of vertices, $E$ its set of edges and $L$ its set of colours.

A vertex $x\in V$ is a \emph{source} in color $c\in L$ if there is no edge $a\in E$ such that $\Gamma_{o_c(a)}(a)=x$. A graph is \emph{sourced} in color $c$ if it has at least one source.

A sequence of edges $a_0,a_1,\dots, a_p=a_0$ is a \emph{cycle} in color $c$ if $\Gamma_{o_c(a_i)}(a_i)=\Gamma_{-o_c(a_{i+1})}(a_{i+1})$ for every $i=0,\dots,p-1$. A graph is \emph{oriented} in color $c$ if there is no cycle in that color.
\end{defi}

\begin{defi}[Oriented sourced directed core graph]
Let $J$, $K$ and $L$ be three disjoint sets of colours with $|J|=j$, $|K|=k$ and $|L|=\ell$. Let $v>0$ and $e\geq 0$.

An \emph{$j$-oriented $k$-sourced $\ell$-directed core graph} is a $(j+k+\ell)$-directed core graph $\Gamma$ such that it is oriented in every color $c\in J$ and sourced in every color $c\in K$.

By $\mO_j\mS_k\mD_\ell\bar\mV_v\bar\mE_e\gra$ we denote the set of all $j$-oriented $k$-sourced $\ell$-directed core graphs with $v$ vertices and $e$ edges.
We set $\mO_j\mS_k\mD_\ell\bar\mV_v\bar\mE_e\Gra:=\langle\mO_j\mS_k\mD_\ell\bar\mV_v\bar\mE_e\gra\rangle$ and
\begin{equation}
\mO_j\mS_k\mD_\ell\bar\mV\bar\mE\Gra:=\bigoplus_{v,e}\mO_j\mS_k\mD_\ell\bar\mV_v\bar\mE_e\Gra.
\end{equation}

Let $\Sigma$ be an $\langle S_2\rangle$ module.
The \emph{space of $j$-oriented $k$-sourced $\ell$-directed graphs} with $v$ vertices and $e$ edges of types in $\Sigma$ is
\begin{equation}
\mO_j\mS_k\mD_\ell\bar\mV_v\bar\mE_e^\Sigma\Gra:=\mO_j\mS_k\mD_\ell\bar\mV_v\bar\mE_e\Gra\otimes_{S_2^{\times e}}\Sigma^{\otimes e}=
\left(\mO_j\mS_k\mD_\ell\bar\mV_v\bar\mE_e\Gra\otimes\Sigma^{\otimes e}\right)_{S_2^{\times e}}.
\end{equation}
Also, let
\begin{equation}
\mO_j\mS_k\mD_\ell\bar\mV\bar\mE^\Sigma\Gra:=\bigoplus_{v,e}\mO_j\mS_k\mD_\ell\bar\mV_v\bar\mE_e^\Sigma\Gra.
\end{equation}
\end{defi}

Some examples of the graphs are drawn in Figure \ref{fig:exGraphs}. If Any of $j,k,\ell$ is 0, we can skip the whole prefix. This is consistent with the equality $\mO_0\mS_0\mD_0\bar\mV\bar\mE^\Sigma\Gra=\bar\mV\bar\mE^\Sigma\Gra$.

\begin{figure}[H]
\centering
$$
\begin{tikzpicture}[baseline=-1ex,scale=1.5]
\node[int] (a) at (0,0) {};
\node[int] (b) at (1,0) {};
\draw (a) edge[rightblue,leftred] (b);
\end{tikzpicture}
\;,\quad
\begin{tikzpicture}[baseline=-1ex,scale=1.5]
\node[int] (a) at (0,0) {};
\node[int] (b) at (90:1) {};
\node[int] (c) at (210:1) {};
\node[int] (d) at (330:1) {};
\draw (a) edge[rightblue,leftred] (b);
\draw (a) edge[rightblue,rightred] (c);
\draw (a) edge[leftblue,leftred] (d);
\draw (b) edge[leftblue,rightred] (c);
\draw (b) edge[leftblue,leftred] (d);
\draw (c) edge[leftblue,leftred] (d);
\end{tikzpicture}
\;,\quad
\begin{tikzpicture}[baseline=-1ex,scale=1.5]
\node[int] (a) at (0,0) {};
\node[int] (b) at (90:1) {};
\node[int] (c) at (210:1) {};
\node[int] (d) at (330:1) {};
\draw (a) edge[rightblue,leftred] (b);
\draw (a) edge[rightblue,rightred] (c);
\draw (a) edge[rightblue,leftred] (d);
\draw (b) edge[leftblue,rightred] (c);
\draw (b) edge[rightblue,leftred] (d);
\draw (c) edge[leftblue,leftred] (d);
\end{tikzpicture}
\;.
$$
\caption{\label{fig:exGraphs}
Examples of 1-oriented 1-sourced graphs. Sourced color is blue and oriented color is red. First two graphs are also 2-oriented.}
\end{figure}
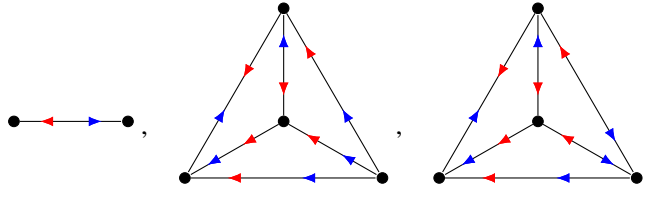

\begin{rem}
If $\Sigma=\Sigma^{fix}$ from Example \ref{ex:s2mod}, we can also talk about sourced and oriented graphs in $\bar\mV\bar\mE^{\Sigma^{fix}}\Gra$. Recall that it is isomorphic to $\bar\mV\bar\mE\Gra$, and we simply use core direction instead of coloured direction in Definition \ref{defi:SO}.
\end{rem}

\subsection{Edge differential}
\label{ss:ed}

Let the $\langle S_2\rangle$ module $\Sigma$ be differential graded (abbreviated \emph{dg}), i.e.\ a graded vector space with a map $\delta_E:\Sigma\rightarrow\Sigma$ of degree $-1$ such that $\delta_E^2=0$. The action of $\langle S_2\rangle$ is of degree 0.
Assume also that $\delta_E$ commutes  with the $\langle S_2\rangle$ module structure, i.e.\ $\delta_E\chi \alpha=\chi\delta_E \alpha$ for every $\chi\in S_2$ and $\alpha\in\Sigma$.

The complex structure is easily extended to tensor product $\Sigma^{\otimes e}$. The differential is defined on homogeneous elements by
\begin{equation}
\label{def:deltaEsgn}
\delta_E^{(a)}(\alpha_1\otimes\dots\otimes \alpha_e):=
(-1)^{k_a}\alpha_1\otimes\dots\otimes \alpha_{a-1}\otimes\delta_E \alpha_a\otimes \alpha_{a+1}\otimes\dots\otimes \alpha_e,
\end{equation}
where $k_a$ is the number of odd graded $\alpha_i$ for $i>a$, and
\begin{equation}
\label{def:deltaE}
\delta_E(\alpha_1\otimes\dots\otimes \alpha_e)=\sum_{a=1}^e\delta_E^{(a)}(\alpha_1\otimes\dots\otimes \alpha_e).
\end{equation}
By abuse of notation, we denote this differential also by $\delta_E$. The signs in the formula are necessary to ensure that $\delta_E^2=0$. It is clear that $\delta_E$ on $\Sigma^{\otimes e}$ commutes with the action of $S_2^{\times e}$.

There is no differential on $\bar\mV_v\bar\mE_e\Gra$, so one easily extends the action of $\delta_E$ to $\bar\mV_v\bar\mE_e\Gra\otimes\Sigma^{\otimes e}$. Since it commutes with the action of $S_2^{\times e}$, the differential $\delta_E$ is also well defined on $\bar\mV_v\bar\mE_e^\Sigma\Gra=\bar\mV_v\bar\mE_e\Gra\otimes_{S_2^{\times e}}\Sigma^{\otimes e}$, and hence on the whole $\bar\mV\bar\mE^\Sigma\Gra$. By further abuse of notation, we denote all those differentials by $\delta_E$.

\begin{prop}
For every dg $\langle S_2\rangle$ module $(\Sigma,\delta_E)$ it holds that
\begin{equation}
H\left(\bar\mV\bar\mE^\Sigma\Gra,\delta_E\right)=\bar\mV\bar\mE^{H(\Sigma,\delta_E)}\Gra.
\end{equation}
\end{prop}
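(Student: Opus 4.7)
The plan is to reduce the statement to two standard facts: that tensoring with a free module is exact and that in characteristic zero, coinvariants by a finite group commute with homology. Since the direct sum decomposition $\bar\mV\bar\mE^\Sigma\Gra=\bigoplus_{v,e}\bar\mV_v\bar\mE_e^\Sigma\Gra$ is preserved by $\delta_E$ (which changes neither $v$ nor $e$), I would fix $v$ and $e$ and prove the identity on each summand.

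First I would observe that $\bar\mV_v\bar\mE_e\Gra$ is concentrated in a single degree and carries the zero differential, so as a chain complex it is its own homology. Thus the tensor product complex $\bar\mV_v\bar\mE_e\Gra\otimes\Sigma^{\otimes e}$ has homology
\begin{equation*}
H\bigl(\bar\mV_v\bar\mE_e\Gra\otimes\Sigma^{\otimes e},\delta_E\bigr)=\bar\mV_v\bar\mE_e\Gra\otimes H\bigl(\Sigma^{\otimes e},\delta_E\bigr),
\end{equation*}
and the graded Leibniz rule in \eqref{def:deltaEsgn}, together with the Künneth theorem over a field, identifies $H(\Sigma^{\otimes e},\delta_E)$ with $H(\Sigma,\delta_E)^{\otimes e}$. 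This gives the identity before passing to coinvariants.

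Next I would pass to the $S_2^{\times e}$-coinvariants. Because $\K$ has characteristic zero and $S_2^{\times e}$ is finite, Maschke's theorem makes the coinvariants functor exact; equivalently, the averaging projector $\frac{1}{|S_2^{\times e}|}\sum_{g}g$ realises coinvariants as a direct summand of the underlying complex. Since the $\langle S_2\rangle$-action on $\Sigma$ commutes with $\delta_E$ by assumption, and the action on $\bar\mV_v\bar\mE_e\Gra$ clearly commutes with the (zero) differential, the action on $\bar\mV_v\bar\mE_e\Gra\otimes\Sigma^{\otimes e}$ is by chain maps. Hence taking coinvariants commutes with taking homology, yielding
\begin{equation*}
H\bigl(\bar\mV_v\bar\mE_e^\Sigma\Gra,\delta_E\bigr)=\bigl(\bar\mV_v\bar\mE_e\Gra\otimes H(\Sigma,\delta_E)^{\otimes e}\bigr)_{S_2^{\times e}}=\bar\mV_v\bar\mE_e^{H(\Sigma,\delta_E)}\Gra,
\end{equation*}
and summing over all $v,e$ completes the proof.

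The argument is essentially routine; the only point that deserves some care is the Künneth step, where I would double-check that the specific Koszul sign convention in \eqref{def:deltaEsgn} is precisely the one making $\Sigma^{\otimes e}$ into the honest tensor-product complex, so that $H(\Sigma^{\otimes e})\cong H(\Sigma)^{\otimes e}$ as $\langle S_2^{\times e}\rangle$-modules rather than only as graded vector spaces. Once this compatibility of the $S_2^{\times e}$-action with the Künneth isomorphism is checked, the rest is formal.
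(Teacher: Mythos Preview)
Your proof is correct and takes essentially the same approach as the paper, which simply states that ``easy homological arguments imply that the homology commutes with the tensor product and also with taking coinvariants under the actions of groups involved.'' You have spelled out precisely those arguments (K\"unneth over a field and exactness of coinvariants in characteristic zero), so there is nothing to add.
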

\begin{proof}
Easy homological arguments imply that the homology commutes with the tensor product and also with taking coinvariants under the actions of groups involved, so the formula follows.
\end{proof}

Analogous arguments hold for multi-direced graphs, and other versions of graphs spaces.

\subsection{Permuting edges}
\label{ss:pe}

In this subsection we study the action of permuting edges in a graph.
There is an action of the symmetric group $S_e$ on core graphs in $\bar\mV_v\bar\mE_e\gra$ that permutes edges:
\begin{equation}
\label{eq:SeAct}
(\chi\Gamma)(a)=\Gamma(\chi^{-1}(a))
\end{equation}
for $\chi\in S_e$. The action extends to $\ell$-directed core graphs in $\mD_\ell\bar\mV_v\bar\mE_e\gra$ by
\begin{equation}
\label{eq:SeActC}
(\chi o_c)(a)=o_c(\chi^{-1}(a))
\end{equation}
for $c\in L$. By linearity, actions are extended to $\bar\mV_v\bar\mE_e\Gra$ and $\mD_\ell\bar\mV_v\bar\mE_e\Gra$.

The action of $S_e$ on $\Sigma^{\otimes e}$ seems easy, we can just permute the factors. But we want the action to commute with the differential $\delta_E$ defined in \eqref{def:deltaE}. This demand causes some difficulties with signs. Indeed we permute factors:
\begin{equation}
\label{def:SeSigma}
\chi(\alpha_1\otimes\dots\otimes\alpha_e)=\pm\alpha_{\chi^{-1}(1)}\otimes\dots\otimes\alpha_{\chi^{-1}(e)}
\end{equation}
but the sign is determined as follows. Assume all $\alpha_i$ are of homogeneous degree. Let $O\subseteq E$ be the set of all $i$ such that $\alpha_i$ is of odd degree. Permutation $\chi$ restricted to $O$ and corestricted to its image can be seen as a permutation on $O$, after ignoring other elements of $E$ and saving the order. The sign in the formula \eqref{def:SeSigma} is the sign of this restricted corestricted permutation.

\begin{prop}
The action of $S_e$ on $\Sigma^{\otimes e}$ commutes with the differential $\delta_E$, i.e.\ $\delta_E\chi\alpha=\chi\delta_E\alpha$ for every $\chi\in S_e$ and $\alpha\in\Sigma^{\times e}$.
\end{prop}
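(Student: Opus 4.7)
The plan is to reduce to adjacent transpositions and then verify that the sign prescriptions in \eqref{def:SeSigma} and \eqref{def:deltaEsgn} match. Since the adjacent transpositions $\tau_a=(a,a{+}1)$, $a=1,\dots,e{-}1$, generate $S_e$, and the set of $\chi\in S_e$ satisfying $\delta_E\chi=\chi\delta_E$ is closed under composition, it is enough to establish $\delta_E\tau_a=\tau_a\delta_E$ for each $a$. I also work on homogeneous pure tensors $\alpha=\alpha_1\otimes\cdots\otimes\alpha_e$, writing $|\alpha_i|$ for the parity of the degree.

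For an adjacent transposition, the restricted-corestricted permutation on odd-degree factors is either the identity or a single swap, non-trivial precisely when both $\alpha_a$ and $\alpha_{a+1}$ are odd. Hence by \eqref{def:SeSigma},
$$\tau_a\alpha=(-1)^{|\alpha_a||\alpha_{a+1}|}\,\alpha_1\otimes\cdots\otimes\alpha_{a-1}\otimes\alpha_{a+1}\otimes\alpha_a\otimes\alpha_{a+2}\otimes\cdots\otimes\alpha_e.$$
Decomposing $\delta_E=\sum_b\delta_E^{(b)}$ as in \eqref{def:deltaE}, for $b\notin\{a,a{+}1\}$ the map $\delta_E^{(b)}$ touches only position $b$, and the count $k_b$ of odd-degree factors to the right of position $b$ is invariant under $\tau_a$; hence $\delta_E^{(b)}\tau_a=\tau_a\delta_E^{(b)}$ tautologically.

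It remains to prove $\delta_E^{(a)}\tau_a=\tau_a\delta_E^{(a+1)}$ (the symmetric identity $\delta_E^{(a+1)}\tau_a=\tau_a\delta_E^{(a)}$ is entirely analogous). Both sides produce the same tensor $\alpha_1\otimes\cdots\otimes\delta_E\alpha_{a+1}\otimes\alpha_a\otimes\cdots\otimes\alpha_e$ up to sign. On the left, $\tau_a$ contributes $(-1)^{|\alpha_a||\alpha_{a+1}|}$, and $\delta_E^{(a)}$ then contributes $(-1)^{|\alpha_a|+k_{a+1}}$, because $\alpha_a$ now lies to the right of position $a$ and $k_{a+1}$ remains the count of odd factors beyond position $a{+}1$. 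On the right, $\delta_E^{(a+1)}$ contributes $(-1)^{k_{a+1}}$, and subsequently $\tau_a$ contributes $(-1)^{|\alpha_a|(|\alpha_{a+1}|+1)}$ since $\delta_E\alpha_{a+1}$ has parity opposite to $\alpha_{a+1}$. Both exponents equal $|\alpha_a||\alpha_{a+1}|+|\alpha_a|+k_{a+1}$ modulo $2$, finishing the verification.

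The only real obstacle is sign bookkeeping rather than conceptual content: the Koszul sign built into \eqref{def:deltaEsgn} and the definition of the $S_e$-action via the restricted permutation on odd-degree entries are set up precisely so that these two exponents match. Care is merely required to track the change in parity when $\delta_E$ lowers the degree of $\alpha_{a+1}$ by one, which is what accounts for the extra $(-1)^{|\alpha_a|}$ factor on each side.
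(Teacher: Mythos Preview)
Your argument is correct and is precisely the ``straightforward verification of signs'' that the paper invokes without details: you reduce to adjacent transpositions, observe that $\delta_E^{(b)}$ for $b\notin\{a,a{+}1\}$ commutes trivially with $\tau_a$, and then match the Koszul exponents for the remaining two terms. This is the same approach as the paper, carried out explicitly.
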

\begin{proof}
Straightforward verification of signs.
\end{proof}

Now we can extend the action of $S_e$ onto $\bar\mV_v\bar\mE_e\Gra\otimes\Sigma^{\otimes e}$. To extend it to $\bar\mV_v\bar\mE_e^\Sigma\Gra$, that is, coinvariants of the $S_2^{\times e}$ action, we need to notice that $S_e$ also acts on $S_2^{\times e}$ by permuting factors. This well defines the action of $S_e$ onto $\bar\mV_v\bar\mE_e^\Sigma\Gra$.

\begin{defi}[Space of graphs with permuted edges]
Let $v>0$ and $e\geq 0$ be integers and $\Sigma$ be an $\langle S_2\rangle$ module.

The \emph{space of graphs with permuted edges} with $v$ vertices and $e$ edges of types in $\Sigma$ is
\begin{equation}
\bar\mV_v\mE_e^\Sigma\Gra:=
\left(\bar\mV_v\bar\mE_e^\Sigma\Gra\right)_{S_e}.
\end{equation}
Also, let
\begin{equation}
\bar\mV\mE^\Sigma\Gra:=\bigoplus_{v,e}\bar\mV_v\mE_e^\Sigma\Gra.
\end{equation}
\end{defi}

Note the disappearance of the bar on $\mE$ in the notation.
Space of multi-directed, sourced or oriented graphs with permuted edges, as well as their subspaces with permuted edges, are defined in analogous way.

\begin{rem}
Since the space $\bar\mV_v\bar\mE_e^\Sigma\Gra$ is already space of coinvariants of the action of $S_2^{\times e}$, the space $\bar\mV_v\mE_e^\Sigma\Gra$ can be directly defined as a space of coinvariants
\begin{equation}
\bar\mV_v\mE_e^\Sigma\Gra=\left(\bar\mV_v\bar\mE_e\Gra\otimes\Sigma^{\otimes e}\right)_{S_e\ltimes S_2^{\times e}}
\end{equation}
under the action of semi-direct product $S_e\ltimes S_2^{\times e}$ where $S_e$ acts on $S_2^{\times e}$ in the usual way.
\end{rem}

\begin{rem}
In many cases of interest, the space of edge types $\Sigma$ will be concentrated in one degree, and $\delta_E=0$. In those cases it does still matter weather $\Sigma$ is concentrated in even or odd degree.

Let $S_e$ in this remark act on $\bar\mV_v\bar\mE_e^\Sigma\Gra$ by simple permuting edges, without sign from \eqref{def:SeSigma}. If $\Sigma$ is concentrated in even degree it holds that
$$
\bar\mV_v\mE_e^\Sigma\Gra=
\left(\bar\mV_v\bar\mE_e^\Sigma\Gra\right)_{S_e}
$$
and if it is concentrated in odd degree it holds that
$$
\bar\mV_v\mE_e^\Sigma\Gra=
\left(\bar\mV_v\bar\mE_e^\Sigma\Gra\otimes\sgn_E^-\right)_{S_e}
$$
where $\sgn_E^-$ is one-dimensional representation of $S_e$ where the odd permutation reverses the sign. In the first case edges are simply indistinguishable, and in the latter case they are indistinguishable up to the sign of the permutation.
\end{rem}

\subsection{Permuting vertices}
\label{ss:pv}

In this subsection we study the action of permuting vertices in a graph. It turns out to be easier than permuting edges.
There is an action of the symmetric group $S_v$ on core graphs in $\bar\mV_v\bar\mE_e^\Sigma\gra$ that permutes vertices:
\begin{equation}
\label{eq:SvAct}
(\chi\Gamma)(a)=(\chi(\Gamma_-(a)),\chi(\Gamma_+(a)))
\end{equation}
for $\chi\in S_v$ and $a\in E$.
The action is analogously defined on $\ell$-directed core graphs $\mD_\ell\bar\mV_v\bar\mE_e\gra$ and extends by linearity to $\bar\mV_v\bar\mE_e\Gra$ and $\mD_\ell\bar\mV_v\bar\mE_e\Gra$.
This action commutes with the actions of $S_2^{\times e}$ and $S_e$ that reverse the direction of edges and permutes edges, so we can well define the action of $S_v$ on $\bar\mV_v\bar\mE_e^\Sigma\Gra$ and $\bar\mV_v\mE_e^\Sigma\Gra$.

As the reader might expect, we will consider possible sign changes along the action of $S_v$. Let $\sgn_V^+$ be the trivial one-dimensional representation of $S_v$ and let $\sgn_V^-$ be one-dimensional representation of the same spaces where the odd permutation reverses the sign.

\begin{defi}[Space of graphs with permuted edges and vertices]
Let $v>0$ and $e\geq 0$ be integers and $\Sigma$ be an $\langle S_2\rangle$ module. Let $\mu\in\{+,-\}$.

The \emph{space of graphs with permuted edges and vertices} with $v$ vertices of sign $\mu$, and $e$ edges of types in $\Sigma$ is
\begin{equation}
\label{eq:sgnSv}
\mV_v^\mu\mE_e^\Sigma\Gra:=
\left(\bar\mV_v\mE_e^\Sigma\Gra\otimes\sgn_V^\mu\right)_{S_v}.
\end{equation}
Also, let
\begin{equation}
\mV^\mu\mE^\Sigma\Gra:=\bigoplus_{v,e}\mV_v^\mu\mE_e^\Sigma\Gra.
\end{equation}
\end{defi}

Note the disappearance of the bar on $\mV$ in the notation.
Space of multi-directed, sourced or oriented graphs with permuted edges and vertices, as well as their subspaces with permuted edges and vertices, are defined in analogous way.

\begin{rem}
We see that there are essentially two types of vertices regarding signs when permuted: even vertices and odd vertices. In the first case vertices of the space $\mV_v^+\mE_e^\Sigma\Gra$ are simply indistinguishable, and in the latter case vertices of $\mV_v^-\mE_e^\Sigma\Gra$ are indistinguishable up to the sign of the permutation.
\end{rem}

\begin{rem}
The choice of permuting edges before vertices is arbitrary because two permutations commute. We can first define the \emph{space of graph complexes with permuted vertices}
\begin{equation}
\mV_v^\mu\bar\mE_e^\Sigma\Gra:=
\left(\bar\mV_v\bar\mE_e^\Sigma\Gra\otimes\sgn_V^\mu\right)_{S_v},
\end{equation}
and then it is
\begin{equation}
\mV_v^\mu\mE_e^\Sigma\Gra=
\left(\mV_v^\mu\bar\mE_e^\Sigma\Gra\right)_{S_e}.
\end{equation}

We can also permute vertices before introducing edge types and define the \emph{space of core graphs with permuted vertices}
\begin{equation}
\mV_v^\mu\bar\mE_e\Gra:=
\left(\bar\mV_v\bar\mE_e\Gra\otimes\sgn_V^\mu\right)_{S_v},
\end{equation}
and then it is
\begin{equation}
\mV_v^\mu\bar\mE_e^\Sigma\Gra=\mV_v^\mu\bar\mE_e\Gra\otimes_{S_2^{\times e}}\Sigma^{\otimes e}.
\end{equation}

All together, it also holds that
\begin{equation}
\mV_v^\mu\mE_e^\Sigma\Gra=
\left(\bar\mV_v\bar\mE_e\Gra\otimes_{S_2^{\times e}}\Sigma^{\otimes e}\otimes\sgn_V^\mu\right)_{S_v\times S_e}=
\left(\bar\mV_v\bar\mE_e\Gra\otimes\Sigma^{\otimes e}\otimes\sgn_V^\mu\right)_{S_v\times S_e\ltimes S_2^{\times e}}.
\end{equation}
\end{rem}

From now on, we will mainly be interested in spaces with permuted edges and vertices. However, original spaces with distinguishable edges and vertices will be useful for many proofs. To emphasise the difference, we will call them \emph{distinguishable} spaces.

\subsection{Edge contraction}
\label{ss:ec}

In this subsection we define a map  called ``edge contraction'' on graph spaces. Roughly speaking, it deletes an edge and merges its end vertices, summed over all edges of certain type.

Let $\Gamma\in \bar\mV_v\bar\mE_e\gra$ be a core graph, and $a\in E$ be its edge that is not a tadpole. We define ``contracting the edge $a$'' $c_a(\Gamma)$ that puts a vertex $x$ instead of 
\begin{tikzpicture}[scale=.5]
 \node[int] (a) at (0,0) {};
 \node[int] (b) at (1,0) {};
 \draw (a) edge[->] node[above] {$\scriptstyle a$} (b);
 \node[above left] at (a) {$\scriptstyle x$};
 \node[above right] at (b) {$\scriptstyle y$};
\end{tikzpicture}
and reconnects all edges that were previous connected to old $x$ and $y$ to the new $x$. Labels of edges after $a$ and vertices after $y$ are shifted by 1. More precisely, let $\hat c_a''(\Gamma)\in \bar\mV_{v-1}\bar\mE_{e-1}\gra$ be such that
\begin{equation}
\left(\hat c_a''(\Gamma)\right)_\lambda(b):= \left\{
\begin{array}{ll}
\Gamma_\lambda(b')
\qquad&\text{if $\Gamma_\lambda(b')<\Gamma_+(a)$,}\\
\Gamma_-(a)
\qquad&\text{if $\Gamma_\lambda(b')=\Gamma_+(a)$,}\\
\Gamma_\lambda(b')-1
\qquad&\text{if $\Gamma_\lambda(b')>\Gamma_+(a)$,}\\
\end{array}
\right.
\end{equation}
for $\lambda\in\{+,-\}$, where $b'=b$ if $b<a$ and $b'=b+1$ if $b\geq a$.

We do not want to contract ``leaves'', i.e.\ edges whose one end is 1-valent, so we define
\begin{equation}
\label{def:noleaves}
c_a''(\Gamma):= \left\{
\begin{array}{ll}
\hat c_a''(\Gamma)
\qquad&\text{if neither $\Gamma_-(a)$ nor $\Gamma_+(a)$ is 1-valent,}\\
0
\qquad&\text{if exectly one out of $\Gamma_-(a)$ and $\Gamma_+(a)$ is 1-valent,}\\
-\hat c_a''(\Gamma)
\qquad&\text{if both $\Gamma_-(a)$ and $\Gamma_+(a)$ are 1-valent.}\\
\end{array}
\right.
\end{equation}

The map can be extended by linearity to $c_a'':\bar\mV_v\bar\mE_e\Gra\rightarrow\bar\mV_{v-1}\bar\mE_{e-1}\Gra$. Here we set that $c_a''=0$ if $a$ is a tadpole. But we introduce two possible sign changes: let $\mu\in\{+,-\}$ and let
\begin{equation}
\label{def:CaV}
c_a'(\Gamma):=-\mu^{v-\Gamma_+(a)}c_a''(\Gamma).
\end{equation}

\begin{rem}
Map \eqref{def:CaV} is defined such that it commutes with the action of $S_v$ that permutes vertices of sign $\mu$. So we can already well define the map $c_a':\mV_v^\mu\bar\mE_e\Gra\rightarrow\mV_{v-1}^\mu\bar\mE_{e-1}\Gra$.
\end{rem}

The analogous map can be defined on the space of $j$-oriented $k$-sourced $\ell$-directed graphs $\mO_j\mS_k\mD_\ell\mV_v^\mu\bar\mE_e\Gra$. The directions on the edge being deleted are simply forgotten. But here contracting an edge may form a cycle or destroy the last source and produce the graph that is not in $\mO_j\mS_k\mD_\ell\bar\mV_v\bar\mE_e\gra$. If so, we just put the result to be 0.

To extend the map to graphs with edge types in $\Sigma$ we need to explain what does a contraction do to an edge of certain type in $\Sigma$. It is modelled by a map $C:\Sigma\rightarrow\K$. Contracting an edge $a$, by abuse of notation named and denoted the same as the map on $\bar \mV_v\bar\mE_e\Gra$, is the map $c_a':\Sigma_2^{\otimes e}\rightarrow \Sigma_2^{\otimes (e-1)}$ mapping
\begin{equation}
c_a':\alpha_1\otimes\dots\otimes\alpha_{a-1}\otimes\alpha_a\otimes\alpha_{a+1}\otimes\dots\otimes\alpha_e\mapsto
C(\alpha_a)\alpha_1\otimes\dots\otimes\alpha_{a-1}\otimes\alpha_{a+1}\otimes\dots\otimes\alpha_e
\end{equation}

It is now straightforward to extend the map to $c_a':\bar\mV_v\bar\mE_e\Gra\otimes\Sigma^{\otimes e}\rightarrow\bar\mV_{v-1}\bar\mE_{e-1}\Gra\otimes\Sigma^{\otimes e-1}$ by $c_a'(\Gamma\otimes\alpha)\mapsto c_a'(\Gamma)\otimes c_a'(\alpha)$. It is necessarily that this map commutes with the action of ${S_2^{\times e}}$. To ensure that, we impose the condition on $C:\Sigma\rightarrow\K$:
\begin{equation}
\label{con:CSigma}
C(\chi \alpha)=\mu C(\alpha)
\end{equation}
for every $\alpha\in\Sigma$ where $\chi\in S_2$ is non-trivial element. Under this condition it is clear that $c'_a$ commutes with the action of ${S_2^{\times e}}$, so we can well define the map $c'_a$ on $\bar\mV_v\bar\mE_e^\Sigma\Gra=\bar\mV_v\bar\mE_e\Gra\otimes_{S_2^{\times e}}\Sigma^{\otimes e}$.

\begin{rem}
Condition \eqref{con:CSigma} is immediately fulfilled for $\Sigma^\mu$ from Example \ref{ex:s2mod}. For $\Sigma^{-\mu}$ the condition implies an uninteresting conclusion that $C$ is always 0.
\end{rem}

The goal now is to extend the map to the space with permuted edges. We clearly need to sum the maps over all edges, but we need to be careful with the signs. Let $\Gamma\otimes\alpha_1\otimes\dots\otimes\alpha_e$ be the representative of a graph in $\bar\mV_v^\mu\bar\mE_e^\Sigma\Gra=\bar\mV_v^\mu\bar\mE_e\Gra\otimes_{S_2^{\times e}}\Sigma^{\otimes e}$ where $\alpha_i$ are of homogeneous degree. Then
\begin{equation}
\label{def:CaE}
c_a:=(-1)^{\deg(\alpha_a)k_a}c'_a,
\end{equation}
where $k_a$ is the number of odd graded $\alpha_i$ for $i>a$, and
\begin{equation}
\delta_C\left(\Gamma\otimes\alpha_1\otimes\dots\otimes\alpha_e\right):=
\sum_{a\in E}
 c_a\left(\Gamma\otimes\alpha_1\otimes\dots\otimes\alpha_e\right).
\end{equation}

\begin{rem}
Instead of special definition of contracting edges with 1-valent ends in \eqref{def:noleaves} it is possible to continue with the original simple contraction $\hat c''_a$, define signed $\hat c'_a$ and $\hat c_a$ in the same way as $c'_a$ and $c_a$, and at the end subtract 1-valent vertices as follows:
\begin{equation}
\delta_C(\Gamma):=
\sum_{a\in E}\hat c_a(\Gamma)-\sum_{\substack{x\in V\\x\text{ 1-valent}}}\hat c_{a(x)}(\Gamma)
\end{equation}
where $a(x)$ is the only edge attached to $x$.
\end{rem}

\begin{prop}
The map $\delta_C$ commutes with the action of $S_e$ that permutes edges, and with the action of $S_v$ that permutes vertices of sign $\mu$, i.e.
\begin{itemize}
\item $\delta_C\chi\Gamma=\chi\delta_C\Gamma$ for every $\chi\in S_e$ and $\Gamma\in \bar\mV_v\bar\mE_e^\Sigma\Gra$,
\item $\delta_C\chi\Gamma=\chi\delta_C\Gamma$ for every $\chi\in S_v$ and $\Gamma\in \bar\mV_v\bar\mE_e^\Sigma\Gra$.
\end{itemize}
\end{prop}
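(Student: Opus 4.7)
The proof naturally splits into two independent verifications: commutation with edge permutations in $S_e$ and commutation with vertex permutations in $S_v$. In both cases it suffices to lift to representatives $\Gamma\otimes\alpha_1\otimes\dots\otimes\alpha_e\in\bar\mV_v\bar\mE_e\Gra\otimes\Sigma^{\otimes e}$ with each $\alpha_i$ of homogeneous degree, check the identity there, and observe that $c_a$ was already built to descend through the $S_2^{\times e}$-coinvariants. By multiplicativity, it is enough to verify each statement on a generating set of transpositions.

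For the $S_v$-part, I would invoke the remark following \eqref{def:CaV}: the factor $-\mu^{v-\Gamma_+(a)}$ was introduced precisely so that $c_a':\mV_v^\mu\bar\mE_e\Gra\to\mV_{v-1}^\mu\bar\mE_{e-1}\Gra$ is $S_v$-equivariant (relabeling a single vertex changes $v-\Gamma_+(a)$ in exactly the right way to compensate the $\sgn_V^\mu$ twist, and the behaviour for transpositions not involving the endpoints of $a$ is trivial). The additional Koszul factor $(-1)^{\deg(\alpha_a)k_a}$ in \eqref{def:CaE} depends only on the grading of the $\alpha_i$'s, not on the vertex labelling, and the map $C:\Sigma\to\K$ does not see vertex indices either. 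Hence each summand $c_a$ commutes with $S_v$, and so does $\delta_C=\sum_a c_a$.

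For the $S_e$-part, fix a transposition $\chi=(i,i+1)\in S_e$. Rewriting the sum gives
\begin{equation*}
\chi\,\delta_C(\Gamma\otimes\vec\alpha)=\sum_a \chi\,c_a(\Gamma\otimes\vec\alpha),\qquad
\delta_C\,\chi(\Gamma\otimes\vec\alpha)=\sum_a c_{\chi(a)}\,\chi(\Gamma\otimes\vec\alpha),
\end{equation*}
so the claim reduces to the edgewise identity $\chi\,c_a=c_{\chi(a)}\,\chi$. For $a\notin\{i,i+1\}$ this is immediate: the underlying graph contraction is compatible with edge relabelling, and the Koszul sign $(-1)^{\deg(\alpha_a)k_a}$ before the contraction matches the corresponding sign after, since swapping two $\alpha_j$ with $j\neq a$ affects the count $k_a$ and the sign of $\chi$ acting on $\Sigma^{\otimes e}$ in precisely the compensating way. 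The essential case is $a\in\{i,i+1\}$: here the sign from deleting $\alpha_i$ before swapping must be compared with the sign from first swapping and then deleting $\alpha_{i+1}$, which is the standard Koszul verification behind the fact that $\delta_E$ in \eqref{def:deltaE} is $S_e$-equivariant.

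The main obstacle is the careful bookkeeping of these Koszul signs in the $S_e$-case; however, once the definition \eqref{def:CaE} is read as putting $c_a$ in the ``Koszul position'' of $\alpha_a$, the verification is formally identical to the one for $\delta_E$ given in the proposition preceding \eqref{def:SeSigma}, so one may largely quote that argument and only check the extra $-\mu^{v-\Gamma_+(a)}$ factor is indifferent to edge permutations (which it is, since it depends only on the graph $\Gamma$, not on edge labels). I would therefore present the proof as a short reduction to the two already established facts (signed equivariance of graph-theoretic contraction, and Koszul-sign equivariance of the tensor-factor deletion) and a one-line check that the vertex-parity factor is untouched by $S_e$.
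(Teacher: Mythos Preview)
Your proposal is correct and is precisely the approach the paper has in mind: the paper's own proof is the single line ``Straightforward verification of signs,'' and you have written out a reasonable sketch of that verification, correctly isolating the two ingredients (the vertex-parity factor in \eqref{def:CaV} handling $S_v$, and the Koszul sign in \eqref{def:CaE} handling $S_e$ by the same mechanism as for $\delta_E$). One small notational point: in the identity $\chi\,c_a=c_{\chi(a)}\,\chi$ the symbol $\chi$ on the left must be read as the induced permutation in $S_{e-1}$ obtained from $\chi\in S_e$ after deleting the contracted edge; this is the natural interpretation and does not affect the argument.
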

\begin{proof}
Straightforward verification of signs.
\end{proof}

Because of the proposition we can now well define the map
\begin{equation}
\delta_C:\mV_v^\mu\mE_e^\Sigma\Gra\rightarrow\mV_{v-1}^\mu\mE_{e-1}^\Sigma\Gra.
\end{equation}

The same map is analogously defined on the space of multi-directed, sourced or oriented graphs with permuted edges and vertices, as well as their subspaces.

\begin{rem}
\label{rem:last}
The signs in the formulas \eqref{def:CaV} and \eqref{def:CaE} are set such that contracting the last edge $e$ heading towards the last vertex $v$, that is the case when no shifting of labels is necessarily after removing the edge and the vertex, has + sign.
Therefore, to calculate the proper signs of terms in $\delta_C(\Gamma)$ for $\Gamma\in\mV_v^\mu\mE_e^\Sigma\Gra$, we can, for every edge, simply chose the representative of $\Gamma$ in $\bar\mV_v\bar\mE_e^\Sigma\Gra$ where this edge is the last one, and the vertex it heads to is the last one. If it is another representative, we permute edges and vertices to get the proper representative, and the sign comes from the permutation.
\end{rem}

\begin{rem}
It is easy to see that $\delta_C$ sends connected graphs to connected graphs and saves the minimum valence condition, so it is well defined also on $\mV^\mu\mE^\Sigma\Grac$ and $\mV^{\mu,\geq i}\mE^\Sigma\Gra$.
\end{rem}

\subsection{The core differential}
\label{ss:tcd}

One gets the real theory of graph complexes after introducing the \emph{core differential}, that is the differential that changes the core graph. More of them are possible, but in this paper we consider only the core differential that comes from the edge contraction.

To do it properly, we need to introduce the grading on the space of core graphs such that the degree of the edge contraction is $-1$. Edge contraction $\delta_C$ deletes an edge, say of type  $\alpha\in\Sigma$ of homogeneous degree, and a vertex. To ensure the degree of $\delta_C$ to be homogeneously $-1$, we give degrees to vertices in the sense of the following definition. 

\begin{defi}[Graded core graph space]
Let $v>0$, $e\geq 0$ and $n$ be integers.
Let
\begin{equation}
\bar\mV_v\bar\mE_e\Gra_n:=\bar\mV_v\bar\mE_e\Gra[vn-n]
\end{equation}
and
\begin{equation}
\bar\mV\bar\mE\Gra_n:=\bigoplus_{v,e}\bar\mV_v\bar\mE_e\Gra_n.
\end{equation}
The number $n$ is called \emph{graph complex parameter}.
\end{defi}

The definition actually gives a degree $n$ to each vertex, and gives a total degree shift $-n$ to every graph, such that one-vertex graphs have degree 0.
Space of edge types $\Sigma$ is already a graded vector space, so now we can introduce the grading on the space of graphs. The notation of the graded graph spaces is given by adding a graph complex parameter in the subscript to $\Gra$, such as $\bar\mV\bar\mE^\Sigma\Gra_n$.

The contracting an edge $\delta_C$ deletes a vertex, decreasing the degree by $n$, and an edge of some type $\alpha$, decreasing the degree by $\deg(\alpha)$. The total degree has to decrease by $1$, so the degree of $\alpha$ has to be $1-n$. Therefore $C:\Sigma\rightarrow\K$ is concentrated in the degree $1-n$, i.e.\ $C(\alpha)=0$ for a homogeneous $\alpha\in\Sigma$ unless $\deg(\alpha)=1-n$.

Another condition that must be fulfilled for the differential is that $\delta_C^2=0$. This can be ensured by setting that vertices and edges have the opposite parity while permuting, as stated in the following proposition.

\begin{prop}
Let $n$ be an integer and let $\mu=(-1)^n$. Then $\delta_C^2=0$ on $\mV^\mu\mE^\Sigma\Gra_n$.
\end{prop}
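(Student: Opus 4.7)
The plan is to show $\delta_C^2 = 0$ by the standard pairing argument: expand $\delta_C^2$ as a sum over ordered pairs $(a,b)$ of distinct edges, and exhibit a sign-reversing involution swapping $(a,b) \leftrightarrow (b,a)$. First I would pass to a distinguishable representative $\Gamma \otimes \alpha_1 \otimes \cdots \otimes \alpha_e$ in $\bar\mV_v\bar\mE_e\Gra \otimes \Sigma^{\otimes e}$; since $\delta_C$ is defined to descend through the coinvariants for $S_2^{\times e}$, $S_e$, and $S_v$, it suffices to check cancellation at this level. I would then partition the ordered pairs into three cases: (i) $a$ and $b$ share no vertex; (ii) $a$ and $b$ share exactly one vertex; (iii) $a$ and $b$ are parallel (share both endpoints, in either direction).

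For case (iii), after contracting $a$ the edge $b$ becomes a tadpole, so $c_b c_a = 0$, and symmetrically $c_a c_b = 0$, hence these pairs contribute nothing. Cases (i) and (ii) produce the same underlying core graph in both orders, and the comparison of signs is the heart of the argument. Here I would use Remark \ref{rem:last}: by an allowed relabeling I may assume that $a$ and $b$ are the last two edges and that the two vertices targeted by the contractions are the last two vertices, so that no vertex-index shifting occurs in either ordering. The signs then reduce to (1) the Koszul signs $(-1)^{\deg(\alpha_a) k_a}$ and $(-1)^{\deg(\alpha_b) k_b}$ from \eqref{def:CaE}, (2) the vertex-ordering factors $\mu^{v-\Gamma_+(\cdot)}$ from \eqref{def:CaV}, and (3) the relabeling sign incurred when, after contracting the first edge, the second one is no longer at the same positional index (for case (ii), where the shared vertex renumbers).

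Computing in the two orderings, the Koszul contribution differs by a factor $(-1)^{\deg(\alpha_a)\deg(\alpha_b)} = (-1)^{(1-n)^2} = (-1)^{1-n}$, because only the edge types at $a$ and $b$ of concentrated degree $1-n$ matter. The vertex-ordering factors together contribute $\mu = (-1)^n$ from the swap of which ``last vertex'' is deleted first. Multiplying these yields $(-1)^{1-n} \cdot (-1)^n = -1$, so the $(a,b)$ and $(b,a)$ contributions are negatives of one another and cancel. This is precisely why the relation $\mu = (-1)^n$ is required. One final check is needed for the modification \eqref{def:noleaves} on leaf-edges: when an endpoint of $a$ becomes $1$-valent after contracting $b$ (or vice versa), the sign prescription for $c_a''$ is designed so that the same pairwise cancellation persists, and this is handled by a short case analysis using the three sub-cases of \eqref{def:noleaves}.

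The main obstacle is not any one case in isolation but the consistent tracking of indices through both orderings when $a$ and $b$ share a vertex: the composition $c_b \circ c_a$ reindexes edges and vertices before $c_b$ sees the graph, and making sure that the Koszul and vertex signs in the compared expressions are read off from representatives that agree requires a careful canonical choice, which Remark \ref{rem:last} provides. Once the canonical choice is fixed, the sign cancellation is the clean identity $(-1)^{1-n}\mu = -1$, and the proposition follows.
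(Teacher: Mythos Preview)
Your proposal is correct and follows essentially the same approach as the paper: both expand $\delta_C^2$ as a sum over ordered pairs of edge contractions and show that swapping the order of contraction introduces the sign $(-1)^{1-n}\cdot(-1)^n=-1$, the first factor coming from the edge (Koszul/permutation) sign and the second from the vertex sign $\mu$. The paper phrases this via Remark~\ref{rem:last} by relabeling so the two edges and their targets are the last two, then switching them; your more explicit case partition (disjoint, one shared vertex, parallel) and your treatment of parallel edges via the tadpole rule are minor elaborations of what the paper leaves to the reader.
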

\begin{proof}
The map $\delta_C^2$ contracts two edges in $\Gamma\in\mV^\mu\mE^\Sigma\Gra_n$, summed over all edges twice. We claim that contracting one edge and then contracting another one cancels with contracting the same two edges in the opposite order.

We choose representative of $\Gamma$ where the edges being deleted have last two labels $e$ and $e-1$, heading towards vertices $v$ and $v-1$ respectively. We leave to the reader the case when both edges head towards the same vertex.

According to Remark \ref{rem:last} contracting those two edges in the order $c''_{e-1}c''_e$ gives the + sign. To contract them in the opposite order we chose another representative by switching $e$ and $e-1$, as well as $v$ and $v-1$, so that the action is again $c''_{e-1}c''_e$ with + sign. But switching edges is a permutation that gives the sign $(-1)^{1-n}$ because edges, at least those that give non-zero differential, are of degree $1-n$. Switching vertices is the permutation that gives the sign $\mu=(-1)^n$, so the total sign of the contractions done in the opposite order is $(-1)^{1-n}(-1)^n=-1$, what we wanted to show.

We leave to the reader to show that conditions from Formula \eqref{def:noleaves} do not change the argument.
\end{proof}

On the spaces with $\Gra_n$ where the graph complex parameter $n$ is given, we assume
\begin{equation}
\label{def:mu_n}
\mu=(-1)^n
\end{equation}
and from now on consider actions of $S_v$ onto $\bar\mV\mE_e^\Sigma\Gra_n$ as it acts on $\bar\mV\mE_e^\Sigma\Gra_n\otimes\sgn_V^{(-1)^n}$:
\begin{equation}
\label{eq:sgnV}
\chi^{new}(\Gamma)=\sgn(\chi)^n\chi(\Gamma),
\end{equation}
such that it is
\begin{equation}
\label{eq:actSv}
\mV_v^\mu\bar\mE_e^\Sigma\Gra=
\left(\bar\mV_v\bar\mE_e^\Sigma\Gra\right)_{S_v}
\end{equation}
instead of \eqref{eq:sgnSv}. We also skip the label $\mu$, shortening e.g.\ the notation $\mV_v^\mu\mE_e^\Sigma\Gra_n$ to $\mV_v\mE_e^\Sigma\Gra_n$. Furthermore, instead of the notation $\mV\mE^\Sigma\Gra_n$ where there is no more data attached to the symbol $\mV$ we skip it and use the simple notation $\mE^\Sigma\Gra_n$.

\subsection{Combined differential}
\label{ss:cd}

\begin{prop}
Let $\mE^\Sigma\Gra_n$ be a graph complex with space of edge types dg $\langle S_2\rangle$ module $(\Sigma,\delta_E)$ with contraction $C:\Sigma\rightarrow\K$ such that $C\delta_E=0$.
Then
$$
\delta_E\delta_C=(-1)^{1-n}\delta_C\delta_E.
$$
\end{prop}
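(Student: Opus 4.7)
The plan is to expand both compositions as $\delta_E=\sum_{b=1}^{e}\delta_E^{(b)}$ and $\delta_C=\sum_{a=1}^{e}c_a$ and compare them term by term on a representative $\Gamma\otimes\alpha_1\otimes\cdots\otimes\alpha_e\in\bar\mV_v\bar\mE_e\Gra\otimes\Sigma^{\otimes e}$ with each $\alpha_i$ homogeneous. I would first dispose of the ``diagonal'' pairs $(a,a)$: the term $c_a\delta_E^{(a)}$ in $\delta_C\delta_E$ produces the scalar factor $C(\delta_E\alpha_a)=0$ by the standing hypothesis, while in $\delta_E\delta_C$ the edge $a$ has been removed after $c_a$ so no $\delta_E^{(a)}$ can act on it. Only off-diagonal pairs $(a,b)$ with $a\neq b$ survive, and they are matched by the natural bijection $(a,b)\leftrightarrow(a,b')$ with $b'=b$ for $b<a$ and $b'=b-1$ for $b>a$, reflecting the relabelling of the edges after contraction.

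For each such matched pair I would compute the Koszul signs directly from \eqref{def:deltaEsgn} and \eqref{def:CaE}. Writing $k_x=\#\{i>x:|\alpha_i|\text{ odd}\}$, the composition $c_a\circ\delta_E^{(b)}$ carries sign $(-1)^{k_b+(1-n)k'_a}$, where $k'_a$ is the counterpart of $k_a$ after the replacement $\alpha_b\mapsto\delta_E\alpha_b$; the composition $\delta_E^{(b')}\circ c_a$ carries sign $(-1)^{(1-n)k_a+k''_{b'}}$, where $k''_{b'}$ is the analogous count past $b'$ after $\alpha_a$ has been omitted. A short case analysis yields $k'_a=k_a$ and $k''_b-k_b\equiv 1-n\pmod 2$ when $b<a$ (the removed $\alpha_a$ has degree $1-n$), and $k''_{b-1}=k_b$ with $k'_a-k_a\equiv 1\pmod 2$ when $b>a$ (the parity flip of $\alpha_b$ shifts the count past $a$ by one, contributing $(1-n)\cdot 1$ to the exponent). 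In both cases the total exponent discrepancy is $1-n\pmod 2$, so the two compositions differ by the scalar $(-1)^{1-n}$, as required.

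The main obstacle is exactly this bookkeeping of Koszul signs; every other ingredient is already in place. Once the identity is proved on $\bar\mV_v\bar\mE_e\Gra\otimes\Sigma^{\otimes e}$, it descends to $\mE^\Sigma\Gra_n$ without further work, because both $\delta_E$ and $\delta_C$ have been shown to be compatible with the $S_2^{\times e}$, $S_e$ and $S_v$ actions used to form this quotient, the last one with the sign twist $\sgn_V^\mu$ for $\mu=(-1)^n$. Thus the scalar relation $\delta_E\delta_C=(-1)^{1-n}\delta_C\delta_E$ passes to the full graph complex.
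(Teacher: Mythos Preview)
Your proposal is correct and follows essentially the same approach as the paper: expand both operators as sums over edges, kill the diagonal terms $c_a\delta_E^{(a)}$ via $C\delta_E=0$, match the off-diagonal terms $(a,b)$ with $a\neq b$ under the relabelling $b\mapsto b'$, and verify the Koszul signs. The paper's own proof in fact leaves the sign computation at the level of ``careful verification of signs imply that'', so your explicit case analysis for $b<a$ versus $b>a$ actually supplies the detail the paper omits.
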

\begin{proof}
We have
$$
\delta_E\delta_C(\Gamma)=\delta_E\left(\sum_{a\in E(\Gamma)}c_a(\Gamma)\right)=
\sum_{b\in E(c_a(\Gamma))}\sum_{a\in E(\Gamma)}\delta_E^{(b)}(c_a(\Gamma)).
$$
$E(c_a(\Gamma))$ has actually all edges from $E(\Gamma)$ except $a$, with shifted labels. Careful verification of signs imply that
$$
\delta_E\delta_C(\Gamma)=
(-1)^{1-n}\sum_{a\in E(\Gamma)}\sum_{\substack{b\in E(\Gamma)\\x\neq a}}c_a\left(\delta_E^{(b)}(\Gamma)\right).
$$
On the other side
$$
\delta_C\delta_E(\Gamma)=
\sum_{a\in E(\Gamma)}\sum_{b\in E(\Gamma)}c_a\left(\delta_E^{(b)}(\Gamma)\right)=
(-1)^{1-n}\delta_E\delta_C(\Gamma)+\sum_{a\in E(\Gamma)}c_a\left(\delta_E^{(a)}(\Gamma)\right).
$$
The latter term is 0 because $C\delta_E=0$.
\end{proof}

We will always assume $C\delta_E=0$. The proposition enables us to introduce the combined differential
\begin{equation}
\delta:=\delta_C+(-1)^{n\deg}\delta_E:\Gamma\mapsto\delta_C(\Gamma)+(-1)^{n\deg(\Gamma)}\delta_E(\Gamma)
\end{equation}
on $\mE^\Sigma\Gra_n$.

\subsection{Splitting complexes and convergence of spectral sequence}
\label{ss:scacoss}

We often want that spectral sequence converges to the homology of the starting complex. In that case we say that spectral sequence converges \emph{correctly}.
For ensuring the correct convergence of a spectral sequence standard arguments are used, such as those from \cite[Appendix C]{DGC1}. E.g.\ we want spectral sequence to be bounded in each degree.

The differential will never change the \emph{loop number} $b:=e-v$, so the complexes split as the direct sum
\begin{equation}
\mE^\Sigma\Gra_n=\bigoplus_{b\in\Z}\mB_b\mE^\Sigma\Gra_n,
\end{equation}
where the prefix $\mB_b$ means
\begin{equation}
\mB_b=\sum_v\mV_v\mE_{b+v},
\end{equation}
so $\mB_b\mE^\Sigma\Gra_n=\sum_v\mV_v\mE_{b+v}^\Sigma\Gra_n$ is the sub-complex of $\mE^\Sigma\Gra_n$ with fixed $b=e-v$. All complexes in this paper can be split like this.

To show that a spectral sequence of the complex that is equal to the direct sum of simpler complexes converges correctly it is enough to show the statement for the complexes in the sum. Sub-complexes with fixed loop number mentioned above will often have bounded spectral sequences. That easily implies their correct convergence, and hence the correct convergence of the whole complex.

\subsection{Simple complexes without edge differential}
\label{ss:scwed}

In this subsection we introduce the simplest complexes with space of edge types from Example \ref{ex:s2mod}.

\begin{defi} [Kontsevich's full graph complex]
Let $n\in\Z$. \emph{Kontsevich's full graph complex} $\fGC_n$ is
$$
\fGC_n:=\mE^{\Sigma^\mu[1-n]}\Gra_n
$$
where $\mu=(-1)^n$, with the differential $\delta=\delta_C$ where $C:\Sigma^\mu\rightarrow\K$ sends a basis element $\alpha\in\Sigma$ to 1.
\end{defi}

Clearly, there is no edge differential in Kontsevich's complex. There is only one type of edges. The degree and the parity of the edge is chosen such that there is non-trivial contraction differential.

Kontsevich's graph complex $\fGC_n$ is originaly defined by M. Kontsevich in \cite{Kont1}, \cite{Kont2}, cf.\ \cite{grt} and \cite{eulerchar}. The complex defined in some of those papers is the dual of the one we define here, the differential being the vertex splitting, but it does not change the homology. Since the complex is defined as a formal vector space over the basis of graphs, the dual can be identified with the complex as the vector space. The real difference is in the differential.

We also define the multi-directed, oriented and sourced version of Kontsevich's graph complex.

\begin{defi}
\label{defi:OSDfGC}
Let $n\in\Z$, $j,k,\ell\geq 0$. The \emph{$j$-oriented $k$-sourced $\ell$-directed full graph complex} $\mO_j\mS_k\mD_\ell\fGC_n$ is
$$
\mO_j\mS_k\mD_\ell\fGC_n:=\mO_j\mS_k\mD_\ell\mE^{\Sigma^\mu[1-n]}\Gra_n
$$
where $\mu=(-1)^n$, with the differential $\delta=\delta_C$ where $C:\Sigma^\mu\rightarrow\K$ sends a basis element $\alpha\in\Sigma$ to 1.
\end{defi}

In this paper we will only consider complexes spanned by connected graphs $\mO_j\mS_k\mD_\ell\fGCc_n\subset\mO_j\mS_k\mD_\ell\fGC_n$. It is indeed $\mO_j\mS_k\mD_\ell\fGCc_n:=\mO_j\mS_k\mD_\ell\mE^{\Sigma^\mu[1-n]}\Grac_n$.

\section{Simplification of graph complexes}
\label{s:Simpl}

In this section we study some simpler versions (often sub-complexes) of the full graph complex $\mO_j\mS_k\mD_\ell\fGC_n$ needed in the paper.
The results of the first two subsections are generalization of \cite[Proposition 3.4]{grt} and \cite[Propositions 6 and 7(1)]{Multi}, and use essentially the same ideas for proving.

\subsection{At least 2-valent vertices}
\label{ss:al2vv}

Let us consider $\mO_j\mS_k\mD_\ell\fGCc_n^{\geq 2}$, the connected $j$-oriented $k$-sourced $\ell$-directed full graph complex with vertices at least 2-valent vertices.

\begin{defi}[Passing vertex]
Let $\Gamma\in\mO_j\mS_k\mD_\ell\bar\mV\bar\mE\gra$ be a $j$-oriented $k$-sourced $\ell$-directed core graph.

A 2-valent vertex $x$ is a \emph{passing vertex} if it is the head of one edge and the tail of another for every colour in $J\cup K\cup L$. I.e.\ let $(a,\mu)\neq(b,\nu)$, for $a,b\in E$ and $\mu,\nu\in\{+,-\}$, be two edge ends such that $\Gamma_\mu(a)=\Gamma_\nu(b)=x$. Then $x$ is passing if it is 2-valent and for every $c\in J\cup K\cup L$ it holds that $\mu o_c(a)\nu o_c(b)=-$.
\end{defi}

\begin{prop}
\label{prop:2}
$$
H\left(\mO_j\mS_k\mD_\ell\fGCc_n,\delta\right)=H\left(\mO_j\mS_k\mD_\ell\fGCc_n^{\geq 2},\delta\right)
$$
\end{prop}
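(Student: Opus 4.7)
The approach is to split $\mO_j\mS_k\mD_\ell\fGCc_n$ as a direct sum of subcomplexes and show the complementary summand is acyclic. First I would check that $\mO_j\mS_k\mD_\ell\fGCc_n^{\geq 2}$ is a subcomplex: contracting an edge whose endpoints $x,y$ both have valence $\geq 2$ produces a merged vertex of valence $\mathrm{val}(x)+\mathrm{val}(y)-2\geq 2$, so the condition is preserved, and such a contraction cannot create a cycle in any oriented colour (no new directed closed walk is produced by merging) nor destroy the last source (each surviving source among $x,y$ is preserved).

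Next I would observe that the complementary subspace $Q \subset \mO_j\mS_k\mD_\ell\fGCc_n$, spanned by graphs with at least one vertex of valence $\leq 1$, is itself a subcomplex; this is precisely what the convention \eqref{def:noleaves} arranges, since the only differential that could remove a $1$-valent vertex is the contraction of its incident leaf, which is defined to be zero whenever the other endpoint has valence $\geq 2$. The only remaining case is the isolated edge (both endpoints $1$-valent), whose contraction lands on the isolated one-vertex graph, which is again in $Q$. This yields a direct sum decomposition of complexes
\begin{equation*}
\mO_j\mS_k\mD_\ell\fGCc_n \;=\; \mO_j\mS_k\mD_\ell\fGCc_n^{\geq 2} \;\oplus\; Q,
\end{equation*}
reducing the proposition to the claim $H(Q,\delta)=0$.

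For the acyclicity of $Q$, I would split $Q = \bigoplus_{r\geq 1}Q_r$ by the number $r$ of $1$-valent vertices, which is preserved by $\delta$ outside the small finite piece containing the isolated edge and the isolated vertex (a two-term complex with an obvious acyclic differential). Each $Q_r$ can be described as a complex of \emph{core} graphs with all vertices $\geq 2$-valent, decorated by non-negative multiplicities on the core vertices summing to $r$ and recording the number of pendant $1$-valent neighbours; $\delta$ only contracts core edges and the multiplicities add when core vertices merge. Acyclicity of $Q_r$ for each $r\geq 1$ would then be established by an explicit chain homotopy $h$ that moves a pendant across a core edge, in the spirit of the homotopies used for \cite[Proposition 3.4]{grt} and \cite[Propositions 6, 7(1)]{Multi}; the multi-directed, oriented and sourced decorations on the core are passive data carried along by $h$, because leaves impose no cycle or source condition.

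The main obstacle is the sign bookkeeping: ensuring that $h$ descends to the $S_v\times S_e\ltimes S_2^{\times e}$-coinvariants and that $\{\delta,h\}$ is a non-zero scalar multiple of the identity, with correct signs coming from the edge-type degree $1-n$, the vertex sign $(-1)^n$, and the $j+k+\ell$ coloured direction decorations on each edge. These verifications are a direct multi-directed extension of the arguments in the cited references and involve no new conceptual ingredient beyond systematically tracking the extra colour data; convergence of the auxiliary spectral sequence used to assemble the splitting by $r$ is guaranteed by the loop-number decomposition discussed in Subsection \ref{ss:scacoss}.
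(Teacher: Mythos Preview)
Your direct-sum decomposition $\mO_j\mS_k\mD_\ell\fGCc_n = \mO_j\mS_k\mD_\ell\fGCc_n^{\geq 2} \oplus Q$ is correct and matches the paper's first step. (One small correction: contracting an edge \emph{can} create a directed cycle in an oriented colour---for instance when a directed path between its endpoints already exists away from the edge---but the paper's convention is simply to set such terms to zero, so the oriented and sourced conditions need no separate verification here.)

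The genuine gap is in your acyclicity argument for $Q$. Your description of $Q_r$ as ``$\geq 2$-valent core graphs decorated by pendant multiplicities summing to $r$'' does not cover all of $Q_r$: removing the $1$-valent vertices of a graph can expose new $\leq 1$-valent vertices. A path on $v\geq 3$ vertices lies in $Q_2$, but stripping its two endpoints yields a shorter path, not a $\geq 2$-valent core; a cycle with a tail of length $\geq 2$ lies in $Q_1$ but again has no $\geq 2$-valent core after one deletion. Your homotopy ``moving a pendant across a core edge'' is therefore undefined on such graphs, and the filtration by $r$ is not quite stable either (the isolated edge, with $r=2$, maps to the isolated vertex, which has $r=0$). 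The paper avoids all of this by working with \emph{antennas}: maximal connected subgraphs consisting of $1$-valent and passing vertices. It filters by the number of edges lying \emph{outside} all antennas; on the associated graded the differential only retracts antennas, and the homotopy that extends an antenna by one edge (summed over antennas) gives acyclicity. The missing idea in your sketch is that one must track entire chains of low-valence vertices, not just distance-one pendants.
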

\begin{proof}
The differential can neither create nor destroy 1-valent or isolated vertices. Therefore we have direct sum of complexes
$$
\mO_j\mS_k\mD_\ell\fGCc_n=\mO_j\mS_k\mD_\ell\fGCc_n^{\geq 2}\oplus \mO_j\mS_k\mD_\ell\fGCc_n^{1}
$$
where $\mO_j\mS_k\mD_\ell\fGCc_n^{1}$ is the sub-complex of graphs containing at least one 1-valent vertex, including the single vertex graph. It is enough to prove that $\mO_j\mS_k\mD_\ell\fGCc_n^{1}$ is acyclic.

We call \emph{the antenna} a maximal connected sub-graph consisting of 1-valent and passing vertices in a graph.

We set up a spectral sequence on $\mO_j\mS_k\mD_\ell\fGCc_n^{1}$ on the number of edges that are not in an antenna. The spectral sequence is bounded, and hence converges correctly. The first differential is the one retracting an antenna. There is a homotopy that extends an antenna (summed over all antennas) that leads to the conclusion that the first differential is acyclic, and hence is the whole differential.
\end{proof}

\subsection{No passing vertices}
\label{ss:npv}

\begin{defi}
Let $\bar\mV\bar\mE\grac^{\circ}\subset\bar\mV\bar\mE\grac^{\geq 2}$ be the set of connected core graphs with only 2-valent vertices and let $\bar\mV\bar\mE\grac^{\varnothing}\subset\bar\mV\bar\mE\grac^{\geq 2}$ be spanned by connected graphs that have at least one vertex that is at least 3-valet.
\end{defi}

Using this notations we define graded spaces $\mO_j\mS_k\mD_\ell\fGCc_n^{\circ}\subset\mO_j\mS_k\mD_\ell\fGCc_n^{\geq 2}$ and $\mO_j\mS_k\mD_\ell\fGCc_n^{\varnothing}\subset\break\mO_j\mS_k\mD_\ell\fGCc_n^{\geq 2}$.
The differential $\delta$ does not change the properties on graphs, so they form sub-complexes and
\begin{equation}
\left(\mO_j\mS_k\mD_\ell\fGCc_n^{\geq 2},\delta\right)=\left(\mO_j\mS_k\mD_\ell\fGCc_n^{\circ},\delta\right)\oplus\left(\mO_j\mS_k\mD_\ell\fGCc_n^{\varnothing},\delta\right).
\end{equation}

The part $\mO_j\mS_k\mD_\ell\fGCc_n^{\circ}$ spanned by the graphs with only 2-valent vertices consists only of loops. They are in general easy to study and they will not be of our interest in this paper. On the other side, we continue with the definition.

\begin{defi}
Let $\mO_j\mS_k\mD_\ell\bar\mV\bar\mE\grac^{\varnothing\rightarrow}\subset\mO_j\mS_k\mD_\ell\bar\mV\bar\mE\grac^{\varnothing}$ be the subset of core graphs with at least one passing vertex and let $\mO_j\mS_k\mD_\ell\bar\mV\bar\mE\grac^{\varnothing\nrightarrow}\subset\mO_j\mS_k\mD_\ell\bar\mV\bar\mE\grac^{\varnothing}$ be the subset of core graphs with no passing vertices.
\end{defi}

Using this notations we define graded spaces $\mO_j\mS_k\mD_\ell\fGCc_n^{\varnothing\rightarrow}\subset\mO_j\mS_k\mD_\ell\fGCc_n^{\varnothing}$ and $\mO_j\mS_k\mD_\ell\fGCc_n^{\varnothing\nrightarrow}\subset\break\mO_j\mS_k\mD_\ell\fGCc_n^{\varnothing}$. The differential $\delta$ can not destroy the last passing vertex, so the first one forms a sub-complex. But $\delta$ can produce a passing vertex, so $\mO_j\mS_k\mD_\ell\fGCc_n^{\varnothing\nrightarrow}$ is not the sub-complex. However, we still talk about the complex $\left(\mO_j\mS_k\mD_\ell\fGCc_n^{\varnothing\nrightarrow},\delta\right)$ where we identify graphs that have passing vertex with 0. In the other words, it is the quotient
\begin{equation}
\left(\mO_j\mS_k\mD_\ell\fGCc_n^{\varnothing\nrightarrow},\delta\right):=\left(\mO_j\mS_k\mD_\ell\fGCc_n^{\varnothing},\delta\right)/
\left(\mO_j\mS_k\mD_\ell\fGCc_n^{\varnothing\rightarrow},\delta\right).
\end{equation}

We will often use the shorter notation
\begin{equation}
\mO_j\mS_k\mD_\ell\GC_n:=\mO_j\mS_k\mD_\ell\fGCc_n^{\varnothing\nrightarrow}.
\end{equation}
called simply \emph{$j$-oriented $k$-sourced $\ell$-directed (ordinary) graph complex}.

For $j=k=\ell=0$ every 2-valent vertex is a passing vertex, so $\fGCc_n^{\varnothing\nrightarrow}\cong\fGCc_n^{\geq 3}$, abbreviated by the standard notation $\GC_n$, called simply \emph{(Kontsevich's) graph complex}.

\begin{prop}
For all $j,k,\ell\geq 0$ it holds that $H\left(\mO_j\mS_k\mD_\ell\fGCc_n^{\varnothing},\delta\right)=H\left(\mO_j\mS_k\mD_\ell\GC_n,\delta\right)$,
\end{prop}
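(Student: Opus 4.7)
The plan is to reduce the claim to the acyclicity of $\mO_j\mS_k\mD_\ell\fGCc_n^{\varnothing\rightarrow}$, the subcomplex of graphs containing at least one passing vertex (and still at least one vertex of valence $\geq 3$). By definition
$$
0 \to \mO_j\mS_k\mD_\ell\fGCc_n^{\varnothing\rightarrow} \to \mO_j\mS_k\mD_\ell\fGCc_n^{\varnothing} \to \mO_j\mS_k\mD_\ell\GC_n \to 0
$$
is a short exact sequence of complexes, so once the left term is acyclic the long exact sequence gives the desired equality of homologies. The strategy for that acyclicity parallels the proof of Proposition \ref{prop:2}, replacing an \emph{antenna} by a \emph{passing chain} -- a maximal connected subgraph of passing vertices (with the edges between them), attached to one or two non-passing (``skeleton'') vertices.

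First I would carry out a brief case analysis to verify that contracting any edge incident to at least one passing vertex preserves the number of skeleton vertices. The merger of two passing vertices is passing in every color, and absorbing a passing vertex into a skeleton vertex $x$ keeps $v(x)$ the same and preserves the passing/non-passing status of $x$ in every color (in any color $c$ in which $x$ is non-passing, the resulting vertex remains non-passing in $c$). A skeleton--skeleton contraction, by contrast, strictly drops the skeleton-vertex count. This gives a filtration of $\mO_j\mS_k\mD_\ell\fGCc_n^{\varnothing\rightarrow}$ by the number of skeleton vertices; after splitting by loop number $b=e-v$ as in Subsection \ref{ss:scacoss} the filtration is bounded, so the associated spectral sequence converges correctly.

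On the $E^0$-page only the skeleton-preserving contractions survive, so the page decomposes as a direct sum over combinatorial ``skeleton'' multigraphs -- those obtained by collapsing each passing chain of $\Gamma$ to a single edge -- and for a fixed skeleton with $k$ chain-slots the corresponding summand is a tensor product of $k$ ``passing-chain'' complexes with basis $\{[n]\}_{n\geq 0}$ indexed by the length of the chain, restricted to the locus where at least one $n_i\geq 1$. Each individual chain complex has homology concentrated at $n=0$: a contracting homotopy that inserts a passing vertex on a chosen edge of a positive-length chain kills the subcomplex $\{[n]:n\geq 1\}$. By K\"unneth the tensor product has homology $\K$ concentrated at $(0,\dots,0)$, and the condition ``at least one $n_i\geq 1$'' removes exactly that point, so each summand of $E^1$ vanishes and $E^1=0$.

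The main technical obstacle will be the sign bookkeeping needed to check that the chain-complex differential is genuinely nonzero at every length $n\geq 1$ and that the contracting homotopy satisfies $\delta_0 h+h\delta_0=\id$. By Remark \ref{rem:last} this reduces to a purely local computation inside a single passing chain, independent of its embedding in the ambient graph, and amounts to the same telescoping argument that handles 2-valent vertices in the classical non-directed setting of \cite[Proposition 3.4]{grt}.
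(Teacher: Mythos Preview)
Your proposal is correct and follows essentially the same approach as the paper: reduce to acyclicity of $\mO_j\mS_k\mD_\ell\fGCc_n^{\varnothing\rightarrow}$, filter by the number of non-passing (skeleton) vertices, and kill the $E^0$-page with a homotopy that lengthens a passing chain. The paper states this in two sentences without your K\"unneth decomposition, but the content is the same; the only detail you glide over is that the summand attached to a fixed skeleton is really the $\mathrm{Aut}$-coinvariants of the tensor product of chain complexes, which is harmless in characteristic zero since the unrestricted tensor product already has vanishing homology on the locus $\{\exists\, n_i\geq 1\}$.
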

\begin{proof}
It is enough to prove that $\mO_j\mS_k\mD_\ell\fGCc_n^{\varnothing\rightarrow}$ is acyclic.

We set up a spectral sequence on $\mO_j\mS_k\mD_\ell\fGCc_n^{\varnothing\rightarrow}$ on the number of non-passing vertices. The spectral sequence obviously converges correctly. The first differential decreases the number of passing vertices by one. There is a homotopy that extends the string of neighbouring passing vertices by one, summed over all such strings, showing the acyclicity.
\end{proof}

\subsection{Graph complexes with special direction}
\label{ss:gcwsd}

Our goal is to construct two quasi-isomorphisms $g:\mO_j\mS_k\mD_\ell\GC\break\rightarrow\mO_j\mS_k\mD_{\ell+1}\GC_n$ and $h:\mO_j\mS_k\mD_\ell\GC\rightarrow\mO_{j+1}\mS_k\mD_\ell\GC_{n+1}$. Since there is an inclusion 
$\mO_{j+1}\mS_k\mD_\ell\GC_{n+1}\hookrightarrow\break\mO_j\mS_{k+1}\mD_\ell\GC_{n+1}$, the latter map can also be seen as $h:\mO_j\mS_k\mD_\ell\GC\rightarrow\mO_j\mS_{k+1}\mD_\ell\GC_{n+1}$.

In all cases the codomain has one more colour. To make maps smoother, we will not use a new colour, but instead of the last colour we will use edge types from $\Sigma^{fix}=\langle\Ed,\dE\rangle$ from Exercise \ref{ex:s2mod}, thus partially using the reasoning from Remark \ref{rem:ColType}. In drawings, the extra direction will be drawn in black.
Let
\begin{equation}
\left(\mD\mO_j\mS_k\mD_\ell\fGC_n,\delta\right)=\left(\mO_j\mS_k\mD_\ell\mE^{\Sigma^{fix}[1-n]}\Gra_n,\delta\right),
\end{equation}
with the differential $\delta=\delta_C$ where $C:\Sigma^{fix}\rightarrow\K$ is defined by $C(\Ed)=1$, while Condition \eqref{con:CSigma}, cf.\ \eqref{def:mu_n}, implies that $C(\dE)=(-1)^n$.

\begin{prop}
\label{prop:DOSD}
There is an isomorphism
\begin{equation}
\eta:\mD\mO_j\mS_k\mD_\ell\bar\mV\bar\mE\fGC_n\rightarrow\mO_j\mS_k\mD_{\ell+1}\bar\mV\bar\mE\fGC_n
\end{equation}
that defines an isomorphism of complexes
\begin{equation}
\eta:\left(\mD\mO_j\mS_k\mD_\ell\fGC_n,\delta\right)\rightarrow\left(\mO_j\mS_k\mD_{\ell+1}\fGC_n,\delta\right).
\end{equation}
\end{prop}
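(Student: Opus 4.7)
My plan is to recognise that, by Remark \ref{rem:ColType}, adding a coloured direction is the same as tensoring the edge-type $\langle S_2\rangle$-module with $\Sigma^{fix}$. Hence I can rewrite the target as
$$
\mO_j\mS_k\mD_{\ell+1}\bar\mV\bar\mE^{\Sigma^\mu[1-n]}\Gra_n
\;\cong\;
\mO_j\mS_k\mD_\ell\bar\mV\bar\mE^{(\Sigma^\mu\otimes\Sigma^{fix})[1-n]}\Gra_n,
$$
pulling the last coloured direction into a $\Sigma^{fix}$ tensor factor of the edge-type module. In this picture both source and target are $\mO_j\mS_k\mD_\ell$-graph spaces over different edge-type modules, and the orientation/sourcedness constraints involve only the unchanged colours in $J\cup K$. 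So it suffices to produce an $\langle S_2\rangle$-equivariant isomorphism $\phi:\Sigma^{fix}\to\Sigma^\mu\otimes\Sigma^{fix}$ which intertwines the contraction maps; functoriality of the graph construction (edge-wise tensor product and then $S_2^{\times e}$-coinvariants) then yields $\eta$ and shows that it commutes with $\delta_C$.

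The map I would use is
$$
\phi(\Ed) := \alpha\otimes\Ed,\qquad \phi(\dE) := \mu\,\alpha\otimes\dE,
$$
where $\alpha$ denotes the chosen basis of $\Sigma^\mu$ and $\mu=(-1)^n$. This is manifestly a linear isomorphism of $2$-dimensional spaces, and it is $\langle S_2\rangle$-equivariant since
$$
\chi\bigl(\phi(\Ed)\bigr) = \chi(\alpha)\otimes\chi(\Ed) = \mu\,\alpha\otimes\dE = \phi(\dE) = \phi\bigl(\chi(\Ed)\bigr),
$$
and symmetrically on $\dE$. Compatibility with contraction reduces to $C\bigl(\phi(\Ed)\bigr)=C(\alpha)\cdot 1=1=C(\Ed)$ and $C\bigl(\phi(\dE)\bigr)=C(\mu\alpha)\cdot 1=\mu=C(\dE)$, where the contraction on the target simply ignores the $\Sigma^{fix}$ factor encoding the added coloured direction. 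Since $\phi$ acts only on edge types and does not touch the core graph or the first $\ell$ coloured directions, the induced $\eta$ respects all orientation/source constraints as well as the $S_e$ and $S_v$ actions that promote $\bar\mV\bar\mE$ to $\mV\mE$; so the isomorphism on the distinguishable level descends to the isomorphism of complexes claimed in the proposition.

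The only nontrivial point—and the one that pins down the definition of $\phi$—is the sign $\mu$ in $\phi(\dE)$, which is forced by equivariance: the $\langle S_2\rangle$-action on $\Sigma^{fix}$ is a pure swap, while on $\Sigma^\mu$ it carries the sign $\mu$, and the factor $\mu$ on $\dE$ compensates this discrepancy. Without it the map would fail to descend to the $S_2^{\times e}$-coinvariants whenever $n$ is odd. Once this sign is in place everything else is essentially a repackaging of the definitions already set up in Section \ref{s:gc}.
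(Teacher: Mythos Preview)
Your proof is correct and follows essentially the same route as the paper: both invoke Remark~\ref{rem:ColType} (and implicitly Example~\ref{ex:s2mod}) to identify an extra coloured direction with a tensor factor of $\Sigma^{fix}$ in the edge-type module, and then observe that the resulting identification respects the $S_2$-actions and contraction. The paper's proof is a single line specifying $\eta$ on $\Gamma\otimes(\Ed\otimes\dots\otimes\Ed)$, while you spell out the edge-type map $\phi$ explicitly and verify the sign $\mu$ needed for $S_2$-equivariance and compatibility with $C$; this is exactly the content the paper leaves as ``straightforward.''
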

\begin{proof}
Straightforward, cf. Example \ref{ex:s2mod} and Remark \ref{rem:ColType}.
The class of $\Gamma\otimes(\Ed\otimes\dots\otimes\Ed)$ is mapped to the class of $\Gamma$ with  $o_c(a)=+$ for the new colour $c$ for every edge $a\in E$.
\end{proof}

We can pull back subspaces of $\mO_j\mS_k\mD_{\ell+1}\fGC_n$ along the isomorphism $\eta$ and define:
\begin{equation}
\mS\mO_j\mS_k\mD_\ell\fGC_n:=\eta^{-1}\left(\mD\mO_j\mS_{k+1}\mD_\ell\fGC_n\right),
\end{equation}
\begin{equation}
\mO\mO_j\mS_k\mD_\ell\fGC_n:=\eta^{-1}\left(\mD\mO_{j+1}\mS_k\mD_\ell\fGC_n\right),
\end{equation}
where the new colour is the one added to sourced, respectively oriented, colours. Furthermore
\begin{equation}
\label{def:DOSDGC}
\mD\mO_j\mS_k\mD_\ell\GC_n:=\eta^{-1}\left(\mO_j\mS_k\mD_{\ell+1}\GC_n\right),
\end{equation}
\begin{equation}
\mS\mO_j\mS_k\mD_\ell\GC_n:=\eta^{-1}\left(\mO_j\mS_{k+1}\mD_\ell\GC_n\right),
\end{equation}
\begin{equation}
\mO\mO_j\mS_k\mD_\ell\GC_n:=\eta^{-1}\left(\mO_{j+1}\mS_k\mD_\ell\GC_n\right),
\end{equation}
where the new colour is the one added to directed, sourced, respectively oriented, colours.

Here one needs to be a bit careful with the notion of passing vertex.
The spaces on the right-hand side are the spaces spanned by graphs without passing vertices, e.g.
\begin{equation}
\mO_j\mS_k\mD_{\ell+1}\GC_n=\mO_j\mS_k\mD_{\ell+1}\mE^{\Sigma^{(-1)^n}[1-n]}\Grac_n^{\varnothing\nrightarrow}.
\end{equation}
Here, we disallow vertices that are passing in all $j+k+\ell+1$ colours. After pull-back with $\eta^{-1}$ we disallow vertices that are passing in all $j+k+\ell$ colours and also in the type-direction. So it is
\begin{equation}
\mD\mO_j\mS_k\mD_\ell\GC_n\neq\mO_j\mS_k\mD_\ell\mE^{\Sigma^{fix}[1-n]}\Grac_n^{\varnothing\nrightarrow}
\end{equation}
because here on the right-hand side we disallow all vertices that are passing in $j+k+\ell$ colours, even if they are not passing in type-direction. This motivates the following definition.

\begin{defi}
Let $x$ be a vertex of a graph $\Gamma\in\mD\mO_j\mS_k\mD_\ell\bar\mV\bar\mE\fGC_n$.

Vertex $x$ is \emph{weakly passing} if it is passing in the underlying core graph in $\mD\mO_j\mS_k\mD_\ell\bar\mV\bar\mE\Gra_n$.

Vertex $x$ is \emph{strongly passing} if the corresponding vertex in $\eta(\Gamma)$ is passing in the underlying core graph in $\mO_j\mS_k\mD_{\ell+1}\bar\mV\bar\mE\Gra_n$.
\end{defi}

Let us now fix $j,k,\ell$ and let the prefix $\mM$ (for \emph{multi}) abbreviate $\mO_j\mS_k\mD_\ell$.
After Proposition \ref{prop:DOSD} our goal is to construct quasi-isomorphisms $g:\MGC\rightarrow\mD\mM\GC_n$ and $h:\MGC\rightarrow\mO\mM\GC_{n+1}\hookrightarrow\mS\mM\GC_{n+1}$.

\subsection{Skeleton graph complexes}
\label{ss:sgc}
Recall that $\mD\MGC_n\subset\mM\mE^{\Sigma^{fix}[1-n]}\Gra_n$ is the complex spanned by connected graphs with all vertices at least 2-valent, at least one vertex more than 2-valent, and no strongly passing vertices. $\mS\MGC_n$ and $\mO\MGC_n$ are its sub-complexes spanned by graphs that are sourced, respectively oriented, in the extra direction.

Vertices in $\Gamma\in\mD\mM\bar\mV\bar\mE\GC_n$ that are not weakly passing are called \emph{skeleton vertices}. String of edges and weakly-passing vertices between two skeleton vertices can be seen as an edge. We are going to construct a graph space where these will indeed be edges of types that indicate the original structure of an edge, called \emph{skeleton edge}. Precisely, let
\begin{equation}
\sigma_n^\infty=\{
\Ed,\;
\dE,\;
\EdE,\;
\dEd,\;
\begin{tikzpicture}[baseline=-.65ex,scale=.5]
 \node[nil] (a) at (0,0) {};
 \node[int] (b) at (1,0) {};
 \node[int] (c) at (2,0) {};
 \node[nil] (d) at (3,0) {};
 \draw (a) edge[-latex] (b);
 \draw (b) edge[latex-] (c);
 \draw (c) edge[-latex] (d);
\end{tikzpicture},\;
\begin{tikzpicture}[baseline=-.65ex,scale=.5]
 \node[nil] (a) at (0,0) {};
 \node[int] (b) at (1,0) {};
 \node[int] (c) at (2,0) {};
 \node[nil] (d) at (3,0) {};
 \draw (a) edge[latex-] (b);
 \draw (b) edge[-latex] (c);
 \draw (c) edge[latex-] (d);
\end{tikzpicture},\;
\begin{tikzpicture}[baseline=-.65ex,scale=.5]
 \node[nil] (a) at (0,0) {};
 \node[int] (b) at (1,0) {};
 \node[int] (c) at (2,0) {};
 \node[int] (d) at (3,0) {};
 \node[nil] (e) at (4,0) {};
 \draw (a) edge[-latex] (b);
 \draw (b) edge[latex-] (c);
 \draw (c) edge[-latex] (d);
 \draw (d) edge[latex-] (e);
\end{tikzpicture},\;
\begin{tikzpicture}[baseline=-.65ex,scale=.5]
 \node[nil] (a) at (0,0) {};
 \node[int] (b) at (1,0) {};
 \node[int] (c) at (2,0) {};
 \node[int] (d) at (3,0) {};
 \node[nil] (e) at (4,0) {};
 \draw (a) edge[latex-] (b);
 \draw (b) edge[-latex] (c);
 \draw (c) edge[latex-] (d);
 \draw (d) edge[-latex] (e);
\end{tikzpicture},\;
\dots
\},
\end{equation}
\begin{equation}
\Sigma_n^{sk}:=\langle\sigma_n^\infty\rangle.
\end{equation}
The non-trivial $\chi\in S_2$ acts on $\Sigma_n^{sk}$ like in Table \ref{tbl:rev}. Since vertices in the skeleton edge are weakly passing, for every colour $c\in J\cup K\cup L$ directions are the same for all original edges in a skeleton edge, so the direction of the skeleton edge in that colour is well defined. The type-direction has to alternate because vertices can not be strongly passing.

\begin{table}[H]
\centering
\begin{tabular}{ r  c  l }
\Ed
& $\leftrightarrow$ &
\dE
\\
\\
\EdE
& $\mapsto$ & $(-1)^{1-n}
\EdE$
\\
\\
\dEd
& $\mapsto$ & $(-1)^{1-n}
\dEd$
\\
\\
$\begin{tikzpicture}[baseline=-.65ex,scale=.5]
 \node[nil] (a) at (0,0) {};
 \node[int] (b) at (1,0) {};
 \node[int] (c) at (2,0) {};
 \node[nil] (d) at (3,0) {};
 \draw (a) edge[-latex] (b);
 \draw (b) edge[latex-] (c);
 \draw (c) edge[-latex] (d);
\end{tikzpicture}$
& $\leftrightarrow$ & $-
\begin{tikzpicture}[baseline=-.65ex,scale=.5]
 \node[nil] (a) at (0,0) {};
 \node[int] (b) at (1,0) {};
 \node[int] (c) at (2,0) {};
 \node[nil] (d) at (3,0) {};
 \draw (a) edge[latex-] (b);
 \draw (b) edge[-latex] (c);
 \draw (c) edge[latex-] (d);
\end{tikzpicture}$
\\
\\
$\begin{tikzpicture}[baseline=-.65ex,scale=.5]
 \node[nil] (a) at (0,0) {};
 \node[int] (b) at (1,0) {};
 \node[int] (c) at (2,0) {};
 \node[int] (d) at (3,0) {};
 \node[nil] (e) at (4,0) {};
 \draw (a) edge[-latex] (b);
 \draw (b) edge[latex-] (c);
 \draw (c) edge[-latex] (d);
 \draw (d) edge[latex-] (e);
\end{tikzpicture}$
& $\mapsto$ & $(-1)^n
\begin{tikzpicture}[baseline=-.65ex,scale=.5]
 \node[nil] (a) at (0,0) {};
 \node[int] (b) at (1,0) {};
 \node[int] (c) at (2,0) {};
 \node[int] (d) at (3,0) {};
 \node[nil] (e) at (4,0) {};
 \draw (a) edge[-latex] (b);
 \draw (b) edge[latex-] (c);
 \draw (c) edge[-latex] (d);
 \draw (d) edge[latex-] (e);
\end{tikzpicture}$
\\
\\
$\begin{tikzpicture}[baseline=-.65ex,scale=.5]
 \node[nil] (a) at (0,0) {};
 \node[int] (b) at (1,0) {};
 \node[int] (c) at (2,0) {};
 \node[int] (d) at (3,0) {};
 \node[nil] (e) at (4,0) {};
 \draw (a) edge[latex-] (b);
 \draw (b) edge[-latex] (c);
 \draw (c) edge[latex-] (d);
 \draw (d) edge[-latex] (e);
\end{tikzpicture}$
& $\mapsto$ & $(-1)^n
\begin{tikzpicture}[baseline=-.65ex,scale=.5]
 \node[nil] (a) at (0,0) {};
 \node[int] (b) at (1,0) {};
 \node[int] (c) at (2,0) {};
 \node[int] (d) at (3,0) {};
 \node[nil] (e) at (4,0) {};
 \draw (a) edge[latex-] (b);
 \draw (b) edge[-latex] (c);
 \draw (c) edge[latex-] (d);
 \draw (d) edge[-latex] (e);
\end{tikzpicture}$
\\
\\
& $\dots$ &
\end{tabular}
\caption {\label{tbl:rev}
The action of the non-trivial $\chi\in S_2$ on $\Sigma_n^{sk}$ defined on the basis $\sigma_n^\infty$. We take the representative of the original graph where original edges and weakly-passing vertices are labelled with consecutive numbers from left to right. Reversing the direction reverses the labelling, and the sign that depends on the graph complex parameter $n$ appears as written.
}
\end{table}

$\Sigma_n^{sk}$ is easily graded by the number $t$ of original edges in it. We call that number the \emph{length} of a skeleton edge. It also has a homological grading that comes from degrees of vertices and original edges in it that is $t(1-n)+(t-1)n=t-n$. It also has a differential $\delta_E$ described in Table \ref{tbl:deltaE}.

\begin{table}[H]
\centering
\begin{tabular}{ r c l c r c l }
$\begin{tikzpicture}[baseline=-.65ex,scale=.45]
 \node[nil] (a) at (0,0) {};
 \node[int] (b) at (1,0) {};
 \node[nil] (c) at (2,0) {};
 \draw (a) edge[-latex] (b);
 \draw (b) edge[latex-] (c);
\end{tikzpicture}
$ & $\mapsto$ & $
\begin{tikzpicture}[baseline=-.65ex,scale=.45]
 \node[nil] (a) at (0,0) {};
 \node[nil] (b) at (1,0) {};
 \draw (a) edge[-latex] (b);
\end{tikzpicture}
-(-1)^n
\begin{tikzpicture}[baseline=-.65ex,scale=.45]
 \node[nil] (a) at (0,0) {};
 \node[nil] (b) at (1,0) {};
 \draw (a) edge[latex-] (b);
\end{tikzpicture}$
& &
$\begin{tikzpicture}[baseline=-.65ex,scale=.45]
 \node[nil] (a) at (0,0) {};
 \node[int] (b) at (1,0) {};
 \node[nil] (c) at (2,0) {};
 \draw (a) edge[latex-] (b);
 \draw (b) edge[-latex] (c);
\end{tikzpicture}
$ & $\mapsto$ & $-
\begin{tikzpicture}[baseline=-.65ex,scale=.45]
 \node[nil] (a) at (0,0) {};
 \node[nil] (b) at (1,0) {};
 \draw (a) edge[-latex] (b);
\end{tikzpicture}
+(-1)^n
\begin{tikzpicture}[baseline=-.65ex,scale=.45]
 \node[nil] (a) at (0,0) {};
 \node[nil] (b) at (1,0) {};
 \draw (a) edge[latex-] (b);
\end{tikzpicture}$
\\
\\
$\begin{tikzpicture}[baseline=-.65ex,scale=.45]
 \node[nil] (a) at (0,0) {};
 \node[int] (b) at (1,0) {};
 \node[int] (c) at (2,0) {};
 \node[nil] (d) at (3,0) {};
 \draw (a) edge[-latex] (b);
 \draw (b) edge[latex-] (c);
 \draw (c) edge[-latex] (d);
\end{tikzpicture}
$ & $\mapsto$ & $
\begin{tikzpicture}[baseline=-.65ex,scale=.45]
 \node[nil] (a) at (0,0) {};
 \node[int] (b) at (1,0) {};
 \node[nil] (c) at (2,0) {};
 \draw (a) edge[-latex] (b);
 \draw (b) edge[latex-] (c);
\end{tikzpicture}
+
\begin{tikzpicture}[baseline=-.65ex,scale=.45]
 \node[nil] (a) at (0,0) {};
 \node[int] (b) at (1,0) {};
 \node[nil] (c) at (2,0) {};
 \draw (a) edge[latex-] (b);
 \draw (b) edge[-latex] (c);
\end{tikzpicture}$
& &
$\begin{tikzpicture}[baseline=-.65ex,scale=.45]
 \node[nil] (a) at (0,0) {};
 \node[int] (b) at (1,0) {};
 \node[int] (c) at (2,0) {};
 \node[nil] (d) at (3,0) {};
 \draw (a) edge[latex-] (b);
 \draw (b) edge[-latex] (c);
 \draw (c) edge[latex-] (d);
\end{tikzpicture}
$ & $\mapsto$ & $ (-1)^n
\begin{tikzpicture}[baseline=-.65ex,scale=.45]
 \node[nil] (a) at (0,0) {};
 \node[int] (b) at (1,0) {};
 \node[nil] (c) at (2,0) {};
 \draw (a) edge[-latex] (b);
 \draw (b) edge[latex-] (c);
\end{tikzpicture}
+(-1)^n
\begin{tikzpicture}[baseline=-.65ex,scale=.45]
 \node[nil] (a) at (0,0) {};
 \node[int] (b) at (1,0) {};
 \node[nil] (c) at (2,0) {};
 \draw (a) edge[latex-] (b);
 \draw (b) edge[-latex] (c);
\end{tikzpicture}$
\\
\\
$\begin{tikzpicture}[baseline=-.65ex,scale=.45]
 \node[nil] (a) at (0,0) {};
 \node[int] (b) at (1,0) {};
 \node[int] (c) at (2,0) {};
 \node[int] (d) at (3,0) {};
 \node[nil] (e) at (4,0) {};
 \draw (a) edge[-latex] (b);
 \draw (b) edge[latex-] (c);
 \draw (c) edge[-latex] (d);
 \draw (d) edge[latex-] (e);
\end{tikzpicture}
$ & $\mapsto$ & $
\begin{tikzpicture}[baseline=-.65ex,scale=.45]
 \node[nil] (a) at (0,0) {};
 \node[int] (b) at (1,0) {};
 \node[int] (c) at (2,0) {};
 \node[nil] (d) at (3,0) {};
 \draw (a) edge[-latex] (b);
 \draw (b) edge[latex-] (c);
 \draw (c) edge[-latex] (d);
\end{tikzpicture}
-(-1)^n
\begin{tikzpicture}[baseline=-.65ex,scale=.45]
 \node[nil] (a) at (0,0) {};
 \node[int] (b) at (1,0) {};
 \node[int] (c) at (2,0) {};
 \node[nil] (d) at (3,0) {};
 \draw (a) edge[latex-] (b);
 \draw (b) edge[-latex] (c);
 \draw (c) edge[latex-] (d);
\end{tikzpicture}$
& &
$\begin{tikzpicture}[baseline=-.65ex,scale=.45]
 \node[nil] (a) at (0,0) {};
 \node[int] (b) at (1,0) {};
 \node[int] (c) at (2,0) {};
 \node[int] (d) at (3,0) {};
 \node[nil] (e) at (4,0) {};
 \draw (a) edge[latex-] (b);
 \draw (b) edge[-latex] (c);
 \draw (c) edge[latex-] (d);
 \draw (d) edge[-latex] (e);
\end{tikzpicture}
$ & $\mapsto$ & $ -
\begin{tikzpicture}[baseline=-.65ex,scale=.45]
 \node[nil] (a) at (0,0) {};
 \node[int] (b) at (1,0) {};
 \node[int] (c) at (2,0) {};
 \node[nil] (d) at (3,0) {};
 \draw (a) edge[-latex] (b);
 \draw (b) edge[latex-] (c);
 \draw (c) edge[-latex] (d);
\end{tikzpicture}
+(-1)^n
\begin{tikzpicture}[baseline=-.65ex,scale=.45]
 \node[nil] (a) at (0,0) {};
 \node[int] (b) at (1,0) {};
 \node[int] (c) at (2,0) {};
 \node[nil] (d) at (3,0) {};
 \draw (a) edge[latex-] (b);
 \draw (b) edge[-latex] (c);
 \draw (c) edge[latex-] (d);
\end{tikzpicture}$
\\
\\
& $\dots$ & & & & $\dots$ & 
\end{tabular}
\caption {\label{tbl:deltaE}
The differential $\delta_E$ on $\Sigma_n^{sk}$ defined on the basis $\sigma_n^\infty$.}
\end{table}

\begin{defi}[Skeleton graph complex]
The \emph{skeleton graph complex} is
\begin{equation}
\left(\mD^{sk}\MGC_n,\delta\right):=
\left(\mM\mE^{\Sigma_n^{sk}}\Grac_n^{\varnothing\nrightarrow},\delta\right),
\end{equation}
where $\delta=\delta_C+(-1)^{n\deg}\delta_E$, $\delta_E$ is the edge differential that comes from the differential on $\Sigma_n^{sk}$ and $\delta_C$ is the core differential that comes from $C:\Sigma_n^{sk}\rightarrow\K$ defined by $C(\Ed)=1$, $C(\dE)=(-1)^n$ and $C(\alpha)=0$ for other $\alpha\in\sigma^\infty$.
\end{defi}

\begin{prop}
\label{prop:SkEq}
There is an isomorphism of complexes
\begin{equation}
\kappa:\left(\mD^{sk}\MGC_n,\delta\right)\rightarrow
\left(\mD\MGC_n,\delta\right).
\end{equation}
\end{prop}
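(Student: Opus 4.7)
The plan is to define $\kappa$ on representatives by ``expanding'' each skeleton edge of length $t$ and type $\alpha\in\sigma_n^\infty$ into a string of $t$ original edges and $t-1$ weakly-passing vertices whose type-direction pattern is prescribed by $\alpha$. Fix a labeling convention: the internal vertices of the $i$-th skeleton edge come after all skeleton vertices and after the internal vertices of skeleton edges $1,\dots,i-1$, and similarly for the new original edges. The inverse sends a graph $\Gamma\in\mD\MGC_n$ to the skeleton graph whose skeleton vertices are the non-weakly-passing vertices of $\Gamma$ and whose skeleton edges are the maximal strings of original edges through weakly-passing vertices; the absence of strongly-passing vertices in $\mD\MGC_n$ ensures that the resulting types always lie in $\sigma_n^\infty$ (strictly alternating in the type-direction), so the inverse is well-defined on objects.

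First I would verify that $\kappa$ descends to the quotients by the various symmetric group actions. The non-trivial case is the $\langle S_2\rangle$-action on a skeleton edge: reversing via $i_a$ and $\chi$ on the skeleton side must correspond, on the expanded side, to reversing the string of $t$ original edges together with reversing the order of the $t-1$ internal vertices. Each flipped original edge contributes the sign $(-1)^{1-n}$ (since $\Sigma^{fix}$ is shifted by $[1-n]$) and the reversal of $t-1$ vertices of parity $\mu=(-1)^n$ contributes the sign of the reversal permutation raised to $n$. A direct count matches the combined expanded-side sign to the sign assigned to $\chi(\alpha)$ in Table~\ref{tbl:rev}. Compatibility with permuting skeleton edges and skeleton vertices reduces to the usual permutation signs and is straightforward.

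The main point is the chain-map identity $\kappa\delta=\delta\kappa$, where on the expanded side $\delta=\delta_C^{exp}$ alone, since $\Sigma^{fix}$ carries no edge differential. Given a skeleton edge of length $t$ in the domain, $\delta_C^{exp}\kappa$ contracts each of its $t$ constituent original edges. When $t=1$, the single contraction removes the edge together with one skeleton vertex and matches $\delta_C^{sk}$, whose constants $C(\Ed)=1$ and $C(\dE)=(-1)^n$ were defined for exactly this purpose. When $t\geq 2$, the crucial observation is that contracting an original edge between two internal weakly-passing vertices produces a vertex that is passing in all colours \emph{and} in the type-direction, hence strongly passing, so the result vanishes in $\mD\MGC_n$. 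Only the two end-contractions — trimming the first or last original edge of the string — survive, and these correspond precisely to the two terms of $\delta_E^{sk}$ in Table~\ref{tbl:deltaE}.

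The main obstacle is the sign bookkeeping: one must verify that the sign produced by $\delta_C^{exp}$ (computed as in Remark~\ref{rem:last} by permuting the contracted edge and vertex to the last positions), the factor $(-1)^{n\deg}$ multiplying $\delta_E^{sk}$ in the combined skeleton-side differential, and the coefficients in Tables~\ref{tbl:rev} and~\ref{tbl:deltaE} are mutually consistent. This is a routine but tedious case analysis on the parity of $t$ and on which end is being trimmed; the entries of the two tables are set up precisely so that these identifications become equalities and $\kappa$ is a chain isomorphism.
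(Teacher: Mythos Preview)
Your proposal is correct and is precisely the argument the paper has in mind: the paper's own proof consists of the single sentence ``The skeleton complex is constructed exactly to fulfil this proposition; details are left to the reader,'' and what you have written supplies those details. In particular, your key observation---that contracting an interior original edge of a length-$t$ skeleton string merges two weakly-passing vertices of opposite type-direction profile into a strongly-passing vertex, hence zero in $\mD\MGC_n$, leaving only the two end-trimmings which reproduce $\delta_E^{sk}$---is exactly the mechanism behind Tables~\ref{tbl:rev} and~\ref{tbl:deltaE}.
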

\begin{proof}
The skeleton complex $\left(\mD^{sk}\MGC_n,\delta_E+(-1)^{n\deg}\delta_C\right)$ is constructed exactly to fulfil this proposition. Details are left to the reader. 
\end{proof}

We can pull back sub-complexes of $\mD\MGC_n$ along the isomorphism $\kappa$ and define:
\begin{equation}
\mS^{sk}\MGC_n:=\kappa^{-1}\left(\mS\MGC_n\right),
\end{equation}
\begin{equation}
\mO^{sk}\MGC_n:=\kappa^{-1}\left(\mO\MGC_n\right).
\end{equation}

\subsection{A new basis for edge types}
\label{ss:anbfet}

The differential looks simpler if we use another bases $\sigma_n^0$ of $\Sigma_n^{sk}$ that consist of elements defined in Table \ref{tbl:sedg}. The action of $S_2$ is derived in Table \ref{tbl:rev2} and the differential $\delta_E$ in Table \ref{tbl:difsknew}.

\begin{table}[H]
\centering
\begin{tabular}{ c  c  c }
$
\Es
:=\frac{1}{2}\left(
\Ed
+(-1)^n
\dE
\right)$
& $\quad$ &
$
\begin{tikzpicture}[baseline=-.65ex,scale=.5]
 \node[nil] (a) at (0,0) {};
 \node[nil] (c) at (1,0) {};
 \draw (a) edge[dotted,->] (c);
\end{tikzpicture}
:=
\Ed
-(-1)^n
\dE$
\\
\\
$
\Ess
:=\frac{1}{2}\left(
\EdE
-
\dEd
\right)$
& &
$
\begin{tikzpicture}[baseline=-.65ex,scale=.5]
 \node[nil] (a) at (0,0) {};
 \node[nil] (c) at (1.4,0) {};
 \draw (a) edge[dotted,->] (c);
 \draw (.7,.15) edge (.7,-.15);
\end{tikzpicture}
:=
\EdE
+
\dEd
$
\\
\\
$
\Esss
:=\frac{1}{2}\left(
\begin{tikzpicture}[baseline=-.65ex,scale=.5]
 \node[nil] (a) at (0,0) {};
 \node[int] (b) at (1,0) {};
 \node[int] (c) at (2,0) {};
 \node[nil] (d) at (3,0) {};
 \draw (a) edge[-latex] (b);
 \draw (b) edge[latex-] (c);
 \draw (c) edge[-latex] (d);
\end{tikzpicture}
+(-1)^n
\begin{tikzpicture}[baseline=-.65ex,scale=.5]
 \node[nil] (a) at (0,0) {};
 \node[int] (b) at (1,0) {};
 \node[int] (c) at (2,0) {};
 \node[nil] (d) at (3,0) {};
 \draw (a) edge[latex-] (b);
 \draw (b) edge[-latex] (c);
 \draw (c) edge[latex-] (d);
\end{tikzpicture}
\right)$
& &
$
\begin{tikzpicture}[baseline=-.65ex,scale=.5]
 \node[nil] (a) at (0,0) {};
 \node[nil] (c) at (1.8,0) {};
 \draw (a) edge[dotted,->] (c);
 \draw (.6,.15) edge (.6,-.15);
 \draw (1.2,.15) edge (1.2,-.15);
\end{tikzpicture}
:=
\begin{tikzpicture}[baseline=-.65ex,scale=.5]
 \node[nil] (a) at (0,0) {};
 \node[int] (b) at (1,0) {};
 \node[int] (c) at (2,0) {};
 \node[nil] (d) at (3,0) {};
 \draw (a) edge[-latex] (b);
 \draw (b) edge[latex-] (c);
 \draw (c) edge[-latex] (d);
\end{tikzpicture}
-(-1)^n
\begin{tikzpicture}[baseline=-.65ex,scale=.5]
 \node[nil] (a) at (0,0) {};
 \node[int] (b) at (1,0) {};
 \node[int] (c) at (2,0) {};
 \node[nil] (d) at (3,0) {};
 \draw (a) edge[latex-] (b);
 \draw (b) edge[-latex] (c);
 \draw (c) edge[latex-] (d);
\end{tikzpicture}$
\\
\\
$
\begin{tikzpicture}[baseline=-.65ex,scale=.5]
 \node[nil] (a) at (0,0) {};
 \node[nil] (c) at (2.2,0) {};
 \draw (a) edge[->] (c);
 \draw (.55,.15) edge (.55,-.15);
 \draw (1.1,.15) edge (1.1,-.15);
 \draw (1.65,.15) edge (1.65,-.15);
\end{tikzpicture}
:=\frac{1}{2}\left(
\begin{tikzpicture}[baseline=-.65ex,scale=.5]
 \node[nil] (a) at (0,0) {};
 \node[int] (b) at (1,0) {};
 \node[int] (c) at (2,0) {};
 \node[int] (d) at (3,0) {};
 \node[nil] (e) at (4,0) {};
 \draw (a) edge[-latex] (b);
 \draw (b) edge[latex-] (c);
 \draw (c) edge[-latex] (d);
 \draw (d) edge[latex-] (e);
\end{tikzpicture}
-
\begin{tikzpicture}[baseline=-.65ex,scale=.5]
 \node[nil] (a) at (0,0) {};
 \node[int] (b) at (1,0) {};
 \node[int] (c) at (2,0) {};
 \node[int] (d) at (3,0) {};
 \node[nil] (e) at (4,0) {};
 \draw (a) edge[latex-] (b);
 \draw (b) edge[-latex] (c);
 \draw (c) edge[latex-] (d);
 \draw (d) edge[-latex] (e);
\end{tikzpicture}
\right)$
& &
$
\begin{tikzpicture}[baseline=-.65ex,scale=.5]
 \node[nil] (a) at (0,0) {};
 \node[nil] (c) at (2.2,0) {};
 \draw (a) edge[dotted,->] (c);
 \draw (.55,.15) edge (.55,-.15);
 \draw (1.1,.15) edge (1.1,-.15);
 \draw (1.65,.15) edge (1.65,-.15);
\end{tikzpicture}
:=
\begin{tikzpicture}[baseline=-.65ex,scale=.5]
 \node[nil] (a) at (0,0) {};
 \node[int] (b) at (1,0) {};
 \node[int] (c) at (2,0) {};
 \node[int] (d) at (3,0) {};
 \node[nil] (e) at (4,0) {};
 \draw (a) edge[-latex] (b);
 \draw (b) edge[latex-] (c);
 \draw (c) edge[-latex] (d);
 \draw (d) edge[latex-] (e);
\end{tikzpicture}
+
\begin{tikzpicture}[baseline=-.65ex,scale=.5]
 \node[nil] (a) at (0,0) {};
 \node[int] (b) at (1,0) {};
 \node[int] (c) at (2,0) {};
 \node[int] (d) at (3,0) {};
 \node[nil] (e) at (4,0) {};
 \draw (a) edge[latex-] (b);
 \draw (b) edge[-latex] (c);
 \draw (c) edge[latex-] (d);
 \draw (d) edge[-latex] (e);
\end{tikzpicture}
$
\\
\\
$\dots$ & & $\dots$
\end{tabular}
\caption{\label{tbl:sedg} Definition of solid and dotted skeleton edges that form another basis $\sigma_n^0$ of $\Sigma_n^{sk}$.}
\end{table}

\begin{table}[H]
\centering
\begin{tabular}{ c c c c c c c c c c c }
\Es
& $\leftrightarrow$ &
\begin{tikzpicture}[baseline=-.65ex,scale=.5]
 \node[nil] (a) at (0,0) {};
 \node[nil] (b) at (1,0) {};
 \draw (a) edge[<-] (b);
\end{tikzpicture}
& $=$ & $(-1)^n
\Es$
& $\quad$ &
\begin{tikzpicture}[baseline=-.65ex,scale=.5]
 \node[nil] (a) at (0,0) {};
 \node[nil] (c) at (1,0) {};
 \draw (a) edge[dotted,->] (c);
\end{tikzpicture}
& $\leftrightarrow$ &
\begin{tikzpicture}[baseline=-.65ex,scale=.5]
 \node[nil] (a) at (0,0) {};
 \node[nil] (c) at (1,0) {};
 \draw (a) edge[dotted,<-] (c);
\end{tikzpicture}
& $=$ & $-(-1)^n
\begin{tikzpicture}[baseline=-.65ex,scale=.5]
 \node[nil] (a) at (0,0) {};
 \node[nil] (c) at (1,0) {};
 \draw (a) edge[dotted,->] (c);
\end{tikzpicture}$
\\
\\
\Ess
& $\leftrightarrow$ &
\begin{tikzpicture}[baseline=-.65ex,scale=.5]
 \node[nil] (a) at (0,0) {};
 \node[nil] (c) at (1.4,0) {};
 \draw (a) edge[<-] (c);
 \draw (.7,.15) edge (.7,-.15);
\end{tikzpicture}
& $=$ & $-(-1)^n
\Ess$
& &
\begin{tikzpicture}[baseline=-.65ex,scale=.5]
 \node[nil] (a) at (0,0) {};
 \node[nil] (c) at (1.4,0) {};
 \draw (a) edge[dotted,->] (c);
 \draw (.7,.15) edge (.7,-.15);
\end{tikzpicture}
& $\leftrightarrow$ &
\begin{tikzpicture}[baseline=-.65ex,scale=.5]
 \node[nil] (a) at (0,0) {};
 \node[nil] (c) at (1.4,0) {};
 \draw (a) edge[dotted,<-] (c);
 \draw (.7,.15) edge (.7,-.15);
\end{tikzpicture}
& $=$ & $-(-1)^n
\begin{tikzpicture}[baseline=-.65ex,scale=.5]
 \node[nil] (a) at (0,0) {};
 \node[nil] (c) at (1.4,0) {};
 \draw (a) edge[dotted,->] (c);
 \draw (.7,.15) edge (.7,-.15);
\end{tikzpicture}$
\\
\\
\Esss
& $\leftrightarrow$ &
\begin{tikzpicture}[baseline=-.65ex,scale=.5]
 \node[nil] (a) at (0,0) {};
 \node[nil] (c) at (1.8,0) {};
 \draw (a) edge[<-] (c);
 \draw (.6,.15) edge (.6,-.15);
 \draw (1.2,.15) edge (1.2,-.15);
\end{tikzpicture}
& $=$ & $-(-1)^n
\Esss$
& &
\begin{tikzpicture}[baseline=-.65ex,scale=.5]
 \node[nil] (a) at (0,0) {};
 \node[nil] (c) at (1.8,0) {};
 \draw (a) edge[dotted,->] (c);
 \draw (.6,.15) edge (.6,-.15);
 \draw (1.2,.15) edge (1.2,-.15);
\end{tikzpicture}
& $\leftrightarrow$ &
\begin{tikzpicture}[baseline=-.65ex,scale=.5]
 \node[nil] (a) at (0,0) {};
 \node[nil] (c) at (1.8,0) {};
 \draw (a) edge[dotted,<-] (c);
 \draw (.6,.15) edge (.6,-.15);
 \draw (1.2,.15) edge (1.2,-.15);
\end{tikzpicture}
& $=$ & $(-1)^n
\begin{tikzpicture}[baseline=-.65ex,scale=.5]
 \node[nil] (a) at (0,0) {};
 \node[nil] (c) at (1.8,0) {};
 \draw (a) edge[dotted,->] (c);
 \draw (.6,.15) edge (.6,-.15);
 \draw (1.2,.15) edge (1.2,-.15);
\end{tikzpicture}$
\\
\\
\begin{tikzpicture}[baseline=-.65ex,scale=.5]
 \node[nil] (a) at (0,0) {};
 \node[nil] (c) at (2.2,0) {};
 \draw (a) edge[->] (c);
 \draw (.55,.15) edge (.55,-.15);
 \draw (1.1,.15) edge (1.1,-.15);
 \draw (1.65,.15) edge (1.65,-.15);
\end{tikzpicture}
& $\leftrightarrow$ &
\begin{tikzpicture}[baseline=-.65ex,scale=.5]
 \node[nil] (a) at (0,0) {};
 \node[nil] (c) at (2.2,0) {};
 \draw (a) edge[<-] (c);
 \draw (.55,.15) edge (.55,-.15);
 \draw (1.1,.15) edge (1.1,-.15);
 \draw (1.65,.15) edge (1.65,-.15);
\end{tikzpicture}
& $=$ & $(-1)^n
\begin{tikzpicture}[baseline=-.65ex,scale=.5]
 \node[nil] (a) at (0,0) {};
 \node[nil] (c) at (2.2,0) {};
 \draw (a) edge[->] (c);
 \draw (.55,.15) edge (.55,-.15);
 \draw (1.1,.15) edge (1.1,-.15);
 \draw (1.65,.15) edge (1.65,-.15);
\end{tikzpicture}$
& &
\begin{tikzpicture}[baseline=-.65ex,scale=.5]
 \node[nil] (a) at (0,0) {};
 \node[nil] (c) at (2.2,0) {};
 \draw (a) edge[dotted,->] (c);
 \draw (.55,.15) edge (.55,-.15);
 \draw (1.1,.15) edge (1.1,-.15);
 \draw (1.65,.15) edge (1.65,-.15);
\end{tikzpicture}
& $\leftrightarrow$ &
\begin{tikzpicture}[baseline=-.65ex,scale=.5]
 \node[nil] (a) at (0,0) {};
 \node[nil] (c) at (2.2,0) {};
 \draw (a) edge[dotted,<-] (c);
 \draw (.55,.15) edge (.55,-.15);
 \draw (1.1,.15) edge (1.1,-.15);
 \draw (1.65,.15) edge (1.65,-.15);
\end{tikzpicture}
& $=$ & $(-1)^n
\begin{tikzpicture}[baseline=-.65ex,scale=.5]
 \node[nil] (a) at (0,0) {};
 \node[nil] (c) at (2.2,0) {};
 \draw (a) edge[dotted,->] (c);
 \draw (.55,.15) edge (.55,-.15);
 \draw (1.1,.15) edge (1.1,-.15);
 \draw (1.65,.15) edge (1.65,-.15);
\end{tikzpicture}$
\\
\\
& & $\dots$ & & & & & & $\dots$
\end{tabular}
\caption {\label{tbl:rev2}
The action of the non-trivial $\chi\in S_2$ on the new basis $\sigma_n^0$. Signs are systematized in Table \ref{tbl:par}.
}
\end{table}

\begin{table}[H]
\centering
\begin{tabular}{ c | c | c }
Length &
\begin{tikzpicture}[baseline=-.65ex,scale=.5]
 \node[nil] (a) at (0,0) {};
 \node[nil] (c) at (2.2,0) {};
 \draw (a) edge[->] (c);
 \draw (.55,.15) edge (.55,-.15);
 \node at (1.3,.15) {$\dots$};
\end{tikzpicture}
&
\begin{tikzpicture}[baseline=-.65ex,scale=.5]
 \node[nil] (a) at (0,0) {};
 \node[nil] (c) at (2.2,0) {};
 \draw (a) edge[dotted, ->] (c);
 \draw (.55,.15) edge (.55,-.15);
 \node at (1.3,.15) {$\dots$};
\end{tikzpicture}
\\
\hline
$4t$ & + & +
\\
$4t+1$ & + & -
\\
$4t+2$ & - & -
\\
$4t+3$ & - & +
\end{tabular}
\caption{\label{tbl:par} Signs of the action of the non-trivial $\chi\in S_2$ on the new basis $\sigma_n^0$, for even graph complex parameter. For odd parameter parities are the opposite.}
\end{table}

\begin{table}[H]
\centering
\begin{tabular}{ c c c c c}
\Ess
& $\mapsto$ &
\begin{tikzpicture}[baseline=-.65ex,scale=.5]
 \node[nil] (a) at (0,0) {};
 \node[nil] (c) at (1,0) {};
 \draw (a) edge[dotted,->] (c);
\end{tikzpicture}
& $\mapsto$ &
0
\\
\\
\Esss
& $\mapsto$ &
\begin{tikzpicture}[baseline=-.65ex,scale=.5]
 \node[nil] (a) at (0,0) {};
 \node[nil] (c) at (1.4,0) {};
 \draw (a) edge[dotted,->] (c);
 \draw (.7,.15) edge (.7,-.15);
\end{tikzpicture}
& $\mapsto$ &
0
\\
\\
\begin{tikzpicture}[baseline=-.65ex,scale=.5]
 \node[nil] (a) at (0,0) {};
 \node[nil] (c) at (2.2,0) {};
 \draw (a) edge[->] (c);
 \draw (.55,.15) edge (.55,-.15);
 \draw (1.1,.15) edge (1.1,-.15);
 \draw (1.65,.15) edge (1.65,-.15);
\end{tikzpicture}
& $\mapsto$ &
\begin{tikzpicture}[baseline=-.65ex,scale=.5]
 \node[nil] (a) at (0,0) {};
 \node[nil] (c) at (1.8,0) {};
 \draw (a) edge[dotted,->] (c);
 \draw (.6,.15) edge (.6,-.15);
 \draw (1.2,.15) edge (1.2,-.15);
\end{tikzpicture}
& $\mapsto$ &
0
\\
\\
& & $\dots$
\end{tabular}
\caption{\label{tbl:difsknew} The differential $\delta_E$ on the new basis $\sigma_n^0$. }
\end{table}

Moreover, we can construct a basis of $\Sigma_n^{sk}$ that has some elements of $\sigma_n^\infty$ and some element of $\sigma_n^0$. Particularly,
\begin{equation}
\sigma_n^t:=\{\alpha\in\sigma_n^\infty|\text{length of $\alpha$ is }\leq t\}\cup
\{\alpha\in\sigma_n^0|\text{length of $\alpha$ is }>t\}
\end{equation}
for $t\in\N$ is also a basis of $\Sigma_n^{sk}$.

\subsection{Base skeleton graphs}
\label{ss:bsg}

The space of edge types $\Sigma_n^{sk}$ is given by its bases $\sigma_n^t$ for $t\in\N\cup\{\infty\}$. Recall from \eqref{eq:gen1} and \eqref{eq:gen2} that in this case the set
\begin{equation}
\mM\bar\mV\bar\mE^{\sigma_n^t}\grac^{\varnothing\nrightarrow}
\end{equation}
generates $\mD^{sk}\mM\bar\mV\bar\mE\GC_n$, up to a degree shift.

In all spaces of the general form $\mE^{\Sigma}\Gra_n^\bullet$ for a space of edge types $\Sigma$ and a space of core graphs $\Gra_n^\bullet$ the choice of the type of an edge does not depend on the choice of the types of other edges. It is not the case for $\mS^{sk}\MGC_n$ and $\mO^{sk}\MGC_n$ because here a certain edge type can be disallowed if it makes a loop or destroys the last source, the condition that depends on the choice of types of other edges.
However, it is possible to describe the distinguishable version $\mS^{sk}\mM\bar\mV\bar\mE\GC_n$ and $\mO^{sk}\mM\bar\mV\bar\mE\GC_n$ as subspaces of $\mD^{sk}\mM\bar\mV\bar\mE\GC_n$ generated by a certain subsets of $\mM\bar\mV\bar\mE^{\sigma_n^t}\grac^{\varnothing\nrightarrow}$ defined in the following definition.

Note that we need at lest some elements of the old basis $\sigma_n^\infty$ to be able to give the definition. This is the very reason of introducing $\sigma_n^i$.

\begin{defi}
\label{defi:SObase}
Let $t\geq 1$ or $t=\infty$. 
Let $\sigma\subseteq\sigma_n^t$ span a dg sub-module of $\Sigma_n^{sk}$ with $\Ed,\dE\in\sigma$.
A sequence of vertices $x_0,x_1,\dots, x_p=x_0$ is a \emph{type-cycle} in a base graph representative $(\Gamma,\alpha_1,\dots,\alpha_e)\in\mM\bar\mV\bar\mE\grac^{\varnothing\nrightarrow}\times\left(\sigma\right)^{\times e}$ if for every $i=0,\dots,p-1$ there exists an edge $a_i$ that is
\begin{itemize}
\item of type $\Ed$ and $\Gamma_-(a_i)=x_i$ and $\Gamma_+(a_i)=x_{i+1}$ or
\item of type $\dE$ and $\Gamma_-(a_i)=x_{i+1}$ and $\Gamma_+(a_i)=x_i$.
\end{itemize}
A base graph representative is \emph{type-oriented} if there is no type-cycle in it.
Let 
\begin{equation}
\mM\bar\mV\bar\mE\grac^{\varnothing\nrightarrow}\times_O\sigma^{\times e}\subset
\mM\bar\mV\bar\mE\grac^{\varnothing\nrightarrow}\times\sigma^{\times e}
\end{equation}
be the set of all base graph representatives that are type-oriented.

Let $t\geq 2$ or $t=\infty$.
Let $\sigma\subseteq\sigma_n^t$ span a dg sub-module of $\Sigma_n^{sk}$ with $\Ed,\dE,\dEd\in\sigma$.
A base graph representative $(\Gamma,\alpha_1,\dots,\alpha_e)\in\mM\bar\mV\bar\mE\grac^{\varnothing\nrightarrow}\times\left(\sigma\right)^{\times e}$ is \emph{type-sourced} if
\begin{itemize}
\item at least one $\alpha_i$ has length at least 3 or
\item at least one $\alpha_i$ is $\dEd$ or
\item there is at least one vertex $x$ such that there is no edge $a$ that is
\begin{itemize}
\item of type $\Ed$ and $\Gamma_+(a)=x$ or
\item of type $\dE$ and $\Gamma_-(a)=x$.
\end{itemize}
\end{itemize}
Let
\begin{equation}
\mM\bar\mV\bar\mE\grac^{\varnothing\nrightarrow}\times_S\sigma^{\times e}\subset
\mM\bar\mV\bar\mE\grac^{\varnothing\nrightarrow}\times\sigma^{\times e}
\end{equation}
be the set of all base graph representatives that are type-sourced.
\end{defi}

Clearly, reversing edges does not change the property of being type-oriented or type-sourced, so we can well defined type-oriented and type-sourced base graphs. Let
\begin{equation}
\mO\mM\bar\mV\bar\mE^{\sigma}\grac^{\varnothing\nrightarrow}\subset
\mM\bar\mV\bar\mE^{\sigma}\grac^{\varnothing\nrightarrow}
\end{equation}
be the set of all base graphs that are type-oriented and let
\begin{equation}
\mS\mM\bar\mV\bar\mE^{\sigma}\grac^{\varnothing\nrightarrow}\subset
\mM\bar\mV\bar\mE^{\sigma}\grac^{\varnothing\nrightarrow}
\end{equation}
be the set of all base graphs that are type-oriented.

\begin{prop}
\label{prop:SObase}
\begin{itemize}
\item[]
\item For $t\geq 1$ or $t=\infty$ it is $\mO^{sk}\mM\bar\mV\bar\mE_e\GC_n=
\left\langle\mM\bar\mV\bar\mE\grac^{\varnothing\nrightarrow}\times_O\left(\sigma_n^t\right)^{\times e}\right\rangle_{S_2^{\times e}}$
i.e.\ the set of base graphs $\mO\mM\bar\mV\bar\mE^{\sigma_n^t}\grac^{\varnothing\nrightarrow}$ generates $\mO^{sk}\mM\bar\mV\bar\mE\GC_n$.
\item For $t\geq 2$ or $t=\infty$ it is $\mS^{sk}\mM\bar\mV\bar\mE_e\GC_n=
\left\langle\mM\bar\mV\bar\mE\grac^{\varnothing\nrightarrow}\times_S\left(\sigma_n^t\right)^{\times e}\right\rangle_{S_2^{\times e}}$
i.e.\ the set of base graphs $\mS\mM\bar\mV\bar\mE^{\sigma_n^t}\grac^{\varnothing\nrightarrow}$ generates $\mS^{sk}\mM\bar\mV\bar\mE\GC_n$.
\end{itemize}
\end{prop}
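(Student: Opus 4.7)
The plan is to first establish the $t=\infty$ case via a direct combinatorial identification through $\kappa$, and then deduce the general $t$ case by a change-of-basis argument.

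For $t=\infty$, the isomorphism $\kappa$ of Proposition \ref{prop:SkEq} restricts to a bijection (compatible with $S_2^{\times e}$) between base graph representatives $(\Gamma,\alpha_1,\dots,\alpha_e)\in \mM\bar\mV\bar\mE\grac^{\varnothing\nrightarrow}\times(\sigma_n^\infty)^{\times e}$ and directed graph representatives in $\mD\mM\bar\mV\bar\mE\GC_n$: each skeleton edge labelled by $\alpha_i\in\sigma_n^\infty$ expands into the specific directed zigzag path it encodes. First I would verify this bijection at the combinatorial level and then match the two conditions on the two sides.

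The key combinatorial observations are: (i) every internal vertex of a skeleton edge of length $\geq 2$ is a local sink or local source in the type-direction, because a non-strongly-passing weakly-passing vertex must have both incident type-edges oriented the same way at it, and this forces the type-direction to alternate along the zigzag; consequently no directed cycle in the type-direction of the expanded graph can pass through such an internal vertex, so every type-cycle must consist of skeleton vertices joined by length-$1$ skeleton edges ($\Ed$ or $\dE$) aligned with the cycle, which is exactly the type-cycle condition of Definition \ref{defi:SObase}. (ii) A source of the expanded graph is either an internal vertex of a skeleton edge -- which occurs precisely when the skeleton edge is $\dEd$ (the unique length-$2$ pattern with a source middle vertex) or has length $\geq 3$ (every zigzag of length $\geq 3$ has at least one local-source internal vertex) -- or a skeleton vertex $x$ all of whose incident type-edges point outward at $x$; under the assumption that the graph has no $\dEd$ and no length-$\geq 3$ edges, this last condition reduces to the third bullet of Definition \ref{defi:SObase}. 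Together, (i) and (ii) show that under the bijection, type-oriented (resp.\ type-sourced) $\sigma_n^\infty$-representatives correspond exactly to oriented (resp.\ sourced) directed representatives, yielding the proposition for $t=\infty$.

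For general $t$, the bases $\sigma_n^\infty$ and $\sigma_n^t$ agree at lengths $\leq t$ and, at each length $>t$, the two basis elements of $\sigma_n^0$ are invertible linear combinations of the two basis elements of $\sigma_n^\infty$ of the same length. Hence any $\sigma_n^t$-representative rewrites in $\sigma_n^\infty$ by expanding the length-$>t$ edges, and conversely, and the type-oriented/type-sourced conditions transfer unchanged: type-orientedness depends only on length-$1$ edges, and these are the same $\Ed,\dE$ in both bases for $t\geq 1$; type-sourcedness depends only on length-$1$ edges, the presence of $\dEd$, and the presence of any edge of length $\geq 3$, all features preserved under the basis change at lengths $>t$ for $t\geq 2$. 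Thus the two generating sets span the same subspace of $\mD^{sk}\mM\bar\mV\bar\mE_e\GC_n$, and the $t=\infty$ conclusion extends to all admissible $t$.

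The main obstacle is the combinatorial matching in (i) and (ii): carefully tracking the alternating type-directions along zigzags, handling the special low-length cases (particularly distinguishing $\EdE$ from $\dEd$ at length $2$), and confirming that the source/cycle structure of the expanded graph is captured \emph{exactly} by the combinatorial conditions of Definition \ref{defi:SObase}. Once these match-ups are in hand, the change-of-basis step is routine linear algebra.
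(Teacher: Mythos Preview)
Your proposal is correct and matches what the paper intends: the paper's own proof is the single word ``Straightforward,'' and you have supplied exactly the natural unwinding of the definitions---identifying $\sigma_n^\infty$-representatives with their expanded zigzags via $\kappa$, checking that internal vertices of skeleton edges of length $\geq 2$ are local sinks/sources so type-cycles only see length-$1$ edges, and that sources in the expanded graph correspond to the three bullets of Definition~\ref{defi:SObase}---followed by the length-preserving change of basis between $\sigma_n^\infty$ and $\sigma_n^t$ at lengths $>t$. There is nothing to add.
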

\begin{proof}
Straightforward.
\end{proof}

\subsection{Sub-complexes with bounded skeleton edges}
\label{ss:scwbse}

The upper simplification of skeleton edges enables the following definition.

\begin{defi}
For $u\in\N$ let
\begin{equation}
\sigma_n^{0,u}:=\{\begin{tikzpicture}[baseline=-.65ex,scale=.5]
 \node[nil] (a) at (0,0) {};
 \node[nil] (c) at (2.2,0) {};
 \draw (a) edge[->] (c);
 \draw (.55,.15) edge (.55,-.15);
 \node at (1.3,.15) {$\dots$};
\end{tikzpicture}
\text{ of length }\leq u+1\}\cup
\{\begin{tikzpicture}[baseline=-.65ex,scale=.5]
 \node[nil] (a) at (0,0) {};
 \node[nil] (c) at (2.2,0) {};
 \draw (a) edge[dotted,->] (c);
 \draw (.55,.15) edge (.55,-.15);
 \node at (1.3,.15) {$\dots$};
\end{tikzpicture}
\text{ of length }\leq u\}
\subset\sigma^0
\end{equation}
and let $\Sigma_n^u=\langle\sigma_n^{0,u}\rangle\subset\Sigma^{sk}$.
Let
\begin{equation}
\mD^u\MGC_n:=
\mM\mE^{\Sigma_n^u}\Grac_n^{\varnothing\nrightarrow},
\end{equation}
for $u\geq 1$ let
\begin{equation}
\mO^u\mM\GC_n:=\mD^u\mM\GC_n\cap\mO^{sk}\mM\GC_n,
\end{equation}
and for $u\geq 2$ let
\begin{equation}
\mS^u\mM\GC_n:=\mD^u\mM\GC_n\cap\mS^{sk}\mM\GC_n.
\end{equation}
\end{defi}

It is clear that $(\Sigma_n^u,\delta_E)$ is a sub-complex of $(\Sigma_n^{sk},\delta_E)$, so all defined graph spaces are sub-complexes with the differential $\delta_C\pm\delta_E$.

We construct a basis of $\Sigma_n^u$ that has some elements of $\sigma_n^\infty$ and some element of $\sigma_n^{0,u}$. Particularly, for $t\leq u$
\begin{equation}
\sigma_n^{t,u}:=\{\alpha\in\sigma_n^\infty|\text{length of $\alpha$ is }\leq t\}\cup
\{\alpha\in\sigma_n^{0,u}|\text{length of $\alpha$ is }>t\}
\end{equation}
is a basis of $\Sigma_n^u$. The following is a variant of Proposition \ref{prop:SObase}.

\begin{prop}
\label{prop:SObase2}
\begin{itemize}
\item[]
\item For $t,u\in\N$, $1\leq t\leq u$ it is $\mO^u\mM\bar\mV\bar\mE_e\GC_n=
\left\langle\mM\bar\mV\bar\mE\grac^{\varnothing\nrightarrow}\times_O\left(\sigma_n^{t,u}\right)^{\times e}\right\rangle_{S_2^{\times e}}$
i.e.\ the set of base graphs $\mO\mM\bar\mV\bar\mE^{\sigma_n^{t,u}}\grac^{\varnothing\nrightarrow}$ generates $\mO^u\mM\bar\mV\bar\mE\GC_n$.
\item For $t,u\in\N$, $2\leq t\leq u$ it is $\mS^u\mM\bar\mV\bar\mE_e\GC_n=
\left\langle\mM\bar\mV\bar\mE\grac^{\varnothing\nrightarrow}\times_S\left(\sigma_n^{t,u}\right)^{\times e}\right\rangle_{S_2^{\times e}}$
i.e.\ the set of base graphs $\mS\mM\bar\mV\bar\mE^{\sigma_n^{t,u}}\grac^{\varnothing\nrightarrow}$ generates $\mS^u\mM\bar\mV\bar\mE\GC_n$.
\end{itemize}
\end{prop}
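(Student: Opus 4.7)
The plan is to run the same two-step argument that underlies Proposition \ref{prop:SObase}, but double book-keep the length restrictions so that everything happens inside $\Sigma_n^u$. First I would record the basic linear-algebra fact that $\sigma_n^{t,u}$ really is a basis of $\Sigma_n^u$ whenever $t\leq u$: by Table \ref{tbl:sedg}, each old-basis element of length $s$ expresses in the new basis as a linear combination of the solid and dotted edges of the same length $s$, so replacing the short new-basis edges (length $\leq t$) with their old-basis counterparts gives another basis with the same span, which is $\Sigma_n^u$. In particular $\sigma_n^{t,u}\subset\sigma_n^t$ and $\sigma_n^{t,u}$ is precisely the intersection of $\sigma_n^t$ with $\Sigma_n^u$; the new-basis elements of length too long to lie in $\sigma_n^{0,u}$ are the only ones dropped.

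For the oriented case (say $1\leq t\leq u$), the inclusion $\langle\mO\mM\bar\mV\bar\mE^{\sigma_n^{t,u}}\grac^{\varnothing\nrightarrow}\rangle_{S_2^{\times e}}\subseteq \mO^u\mM\bar\mV\bar\mE_e\GC_n$ is the geometric half: types in $\sigma_n^{t,u}\subset\Sigma_n^u$ place the graph in $\mD^u$, and after applying $\kappa$ and $\eta$ each longer skeleton edge unfolds to an alternating path whose internal weakly-passing vertices are sinks or sources in the type-direction. Such vertices can never be traversed in a directed walk in the extra colour, so no type-direction cycle in the unfolded graph can use anything other than length-$1$ skeleton edges; the type-oriented hypothesis then rules out the remaining cycles and the graph lies in $\mO^{sk}$, hence in $\mO^u$.

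For the reverse inclusion, I would lift Proposition \ref{prop:SObase} directly: given $\Gamma\in\mO^u\mM\bar\mV\bar\mE_e\GC_n=\mO^{sk}\mM\bar\mV\bar\mE_e\GC_n\cap\mD^u\mM\bar\mV\bar\mE_e\GC_n$, Proposition \ref{prop:SObase} expresses $\Gamma$ in the $\sigma_n^t$-basis as a combination of type-oriented base graphs. Since $\sigma_n^{t,u}=\sigma_n^t\cap\sigma_n^{0,u}$, the coordinates of $\Gamma\in\Sigma_n^u$ in the $\sigma_n^t$-basis of $\Sigma_n^{sk}$ must vanish outside $\sigma_n^{t,u}$, so only type-oriented base graphs with types in $\sigma_n^{t,u}$ survive, giving the required expression.

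The sourced case ($2\leq t\leq u$) proceeds identically. The only extra check is that the three clauses defining type-sourcedness in Definition \ref{defi:SObase} all make sense in the bounded basis: $\dEd$ lies in $\sigma_n^{t,u}$ since $t\geq 2$, and length-$\geq 3$ edges of $\sigma_n^\infty$ are present iff $t\geq 3$, but when $t=2$ the long edges are represented by their new-basis counterparts, which are themselves expressible in terms of length-$\geq 3$ alternating paths and contribute a type-source exactly as before. The only place where an obstacle could lurk is in Step 2, verifying that skeleton edges of length $\geq 2$ really cannot host type-direction cycles or kill sources; this reduces to the observation that every weakly passing vertex produced by $\kappa$ is by construction \emph{not} strongly passing, so in the type-direction it is always a local sink or source, which cannot be traversed by a directed path and which itself contributes a source whenever the edge is $\dEd$ or a longer alternation starting with $\dE$.
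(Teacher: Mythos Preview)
Your argument is correct and is exactly the ``straightforward'' verification the paper leaves implicit: since $\sigma_n^{t,u}\subset\sigma_n^t$ is a basis of $\Sigma_n^u$, the type-oriented (resp.\ type-sourced) base graphs in $\sigma_n^{t,u}$ are precisely those $\sigma_n^t$-basis elements that lie simultaneously in $\mO^{sk}$ (resp.\ $\mS^{sk}$) and in $\mD^u$, so their span is the intersection. One small slip: the displayed equality $\sigma_n^{t,u}=\sigma_n^t\cap\sigma_n^{0,u}$ is false as a set identity (the short old-basis edges are not in $\sigma_n^{0,u}$); what you mean, and correctly stated a few lines earlier, is $\sigma_n^{t,u}=\sigma_n^t\cap\Sigma_n^u$.
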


\begin{prop}
\label{prop:SmallSk}
\begin{enumerate}
\item[]
\item The inclusion $\mD^u\MGC_n\hookrightarrow\mD^{sk}\MGC_n$ is a quasi-isomorphism for all $u\geq 0$.
\item The inclusion $\mO^u\MGC_n\hookrightarrow\mO^{sk}\MGC_n$ is a quasi-isomorphism for all $u\geq 1$.
\item The inclusion $\mS^u\MGC_n\hookrightarrow\mS^{sk}\MGC_n$ is a quasi-isomorphism for all $u\geq 2$.
\end{enumerate}
\end{prop}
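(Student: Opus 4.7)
The plan is to prove each of the three inclusions is a quasi-isomorphism by showing that the corresponding quotient complex is acyclic, via a spectral sequence on the number of vertices whose $E_0$-differential is $\delta_E$ and whose key input is an explicit contracting homotopy on the edge-type dg module $\Sigma_n^{sk}/\Sigma_n^u$.

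Reading off Table \ref{tbl:difsknew}, in the basis $\sigma_n^0$ the edge differential $\delta_E$ on $\Sigma_n^{sk}$ pairs each solid edge of length $s \geq 2$ with the dotted edge of length $s-1$, while $\Es$ is an isolated cycle. Hence both $\Sigma_n^u$ and $\Sigma_n^{sk}$ have $\delta_E$-homology equal to $\langle \Es\rangle$, and the quotient $Q := \Sigma_n^{sk}/\Sigma_n^u$ carries an explicit contracting homotopy $h$ sending each dotted edge of length $s \geq u+1$ back to a (suitably signed) solid edge of length $s+1$. Writing $\Sigma_n^{sk} = \Sigma_n^u \oplus Q$ as chain complexes, the quotient $(\Sigma_n^{sk})^{\otimes e}/(\Sigma_n^u)^{\otimes e}$ decomposes as a direct sum, indexed by nonempty $S \subseteq \{1,\dots,e\}$, of summands containing at least one $Q$-factor, each acyclic with contracting homotopy obtained by applying $h$ to the $Q$-factor of smallest index. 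Tensoring with the purely combinatorial space $\mM\bar\mV\bar\mE\Grac_n^{\varnothing\nrightarrow}$ (on which $\delta_E$ acts trivially) and passing to coinvariants under $S_2^{\times e} \rtimes S_e$ and $S_v$ preserves acyclicity in characteristic zero, so $\mD^{sk}\MGC_n/\mD^u\MGC_n$ is $\delta_E$-acyclic. Filtering by vertex number (preserved by $\delta$ since $\delta_C$ strictly decreases it), the $E_0$-page is just the $\delta_E$-complex, so $E_1$ of the quotient vanishes; combined with the correct convergence arising from the splitting by loop number $b = e - v$ as in Section \ref{ss:scacoss}, this proves part (1).

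For parts (2) and (3) the same edgewise homotopy $h$ restricts to contracting homotopies on $\mO^{sk}\MGC_n/\mO^u\MGC_n$ (for $u \geq 1$) and $\mS^{sk}\MGC_n/\mS^u\MGC_n$ (for $u \geq 2$), whence the same spectral sequence argument applies. Working in the mixed basis $\sigma_n^{t,u}$ with $t = 1$ in the oriented case and $t = 2$ in the sourced case, every edge appearing in $Q$ has length $\geq u+1 \geq t+1$ in the new basis, so $h$ replaces it with an edge of length $\geq u+2$; in particular $h$ never creates or destroys a length-$1$ edge of type $\Ed$ or $\dE$, nor (in the sourced case) a length-$2$ edge of type $\dEd$. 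Since these are precisely the edge types featured in the type-cycle condition and the type-source / $\dEd$-presence conditions of Definition \ref{defi:SObase}, the properties of being type-oriented or type-sourced are preserved by $h$ on each base-graph representative. This preservation of the sub-complex structure is the main technical step of the proof, and it is exactly what forces the numerical bounds $u \geq 1$ and $u \geq 2$: for smaller $u$ the quotient $Q$ would contain edges whose modification by $h$ could alter the very length-$1$ or length-$2$ edge content defining orientedness or sourcedness.
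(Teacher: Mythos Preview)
Your proof is correct and follows essentially the same approach as the paper's: pass to the quotient, filter by vertex number so that the first differential is $\delta_E$, and exhibit a contracting homotopy that lengthens the long dotted edges. Your treatment is in fact more detailed than the paper's terse sketch, in particular making explicit why the homotopy respects the oriented and sourced subcomplexes and hence why the thresholds $u\geq 1$ and $u\geq 2$ are forced.
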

\begin{proof}
We prove all parts of the proposition simultaneously. It is enough to prove that the quotient is acyclic. It is the complex spanned by graphs with at least one solid skeleton edge longer than $u+1$ or dotted skeleton edge longer than $u$.

On the quotient complex we set up a spectral sequence on the number of vertices. Standard splitting of the complex as the product of complexes with fixed loop number implies the correct convergence. The first differential is $\delta_E$.

One can define a homotopy that sends dotted skeleton edges opposite of $\delta_E$ from Table \ref{tbl:difsknew}, summed over all dotted skeleton edges longer than $u$, with suitable sign. This implies the acyclicity.
\end{proof}

Note that degree of $\Es$ is $1-n$ and the parity of reversing it is $\mu=(-1)^n$, exactly as for the edge type in $\Sigma^\mu$ from $\MGC_n$. Therefore the new $\mD^0\MGC_n$ is exactly equal to the original $\MGC_n$. So the proposition implies the following corollary, that is the first part of Theorem \ref{thm:main}.

\begin{cor}
There is a quasi-isomorphism $g:\mO_j\mS_k\mD_\ell\GC_n\rightarrow\mO_j\mS_k\mD_{\ell+1}\GC_n$.
It sends an edge $\Es$ with extra coloured directions to $\frac{1}{2}\left(\Ed+(-1)^n\dE\right)$ where the arrow indicates the direction in the new colour while directions in other colours remain the same.
\end{cor}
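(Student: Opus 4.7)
The proof will be a direct assembly of the isomorphisms and quasi-isomorphism already established, composed into a single map whose action on generators matches the stated formula.

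The plan is to define $g$ as the composition
$$
\mO_j\mS_k\mD_\ell\GC_n = \mD^0\MGC_n \hookrightarrow \mD^{sk}\MGC_n
\xrightarrow{\ \kappa\ } \mD\MGC_n
\xrightarrow{\ \eta\ } \mO_j\mS_k\mD_{\ell+1}\GC_n,
$$
where $\MGC_n$ abbreviates $\mO_j\mS_k\mD_\ell\GC_n$ as in Subsection \ref{ss:gcwsd}. First I would justify the identification on the left: the basis $\sigma_n^{0,0}$ of $\Sigma_n^0$ consists of the single solid edge $\Es$, which by Table \ref{tbl:sedg} has length one and hence sits in homological degree $1-n$, and by Table \ref{tbl:par} reverses with sign $(-1)^n = \mu$. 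Thus $\Sigma_n^0 \cong \Sigma^\mu[1-n]$ as dg $\langle S_2\rangle$-modules, with $C(\Es) = \tfrac12\bigl(C(\Ed)+(-1)^nC(\dE)\bigr) = 1$, giving an equality of complexes $\mD^0\MGC_n = \MGC_n = \mO_j\mS_k\mD_\ell\GC_n$.

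The inclusion $\mD^0\MGC_n \hookrightarrow \mD^{sk}\MGC_n$ is a quasi-isomorphism by Proposition \ref{prop:SmallSk}(1), the map $\kappa$ is the isomorphism of Proposition \ref{prop:SkEq} contracting the skeleton structure, and $\eta$ is the isomorphism of Proposition \ref{prop:DOSD} reinterpreting the edge types $\Ed,\dE$ as an extra coloured direction. Consequently $g$ is a quasi-isomorphism.

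To verify the formula, I trace a generator. A graph $\Gamma \in \mD^0\MGC_n$ with all edges of type $\Es$ corresponds, via the definitions in Table \ref{tbl:sedg}, to the element of $\mD^{sk}\MGC_n$ in which each edge carries the linear combination $\tfrac12(\Ed + (-1)^n\dE)$. The map $\kappa$ expands each skeleton edge into its underlying string of edges and weakly passing vertices; since length-one solid edges are already such strings, $\kappa$ acts as the identity here. Finally $\eta$ replaces the type-direction $\Ed$ (respectively $\dE$) with a positive (respectively negative) direction in the new colour in $L\cup\{c_{\mathrm{new}}\}$, leaving all other coloured directions untouched. The composition therefore sends $\Es$ to $\tfrac12\bigl(\Ed + (-1)^n\dE\bigr)$, interpreted in the new colour, exactly as claimed. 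The only nontrivial step is Proposition \ref{prop:SmallSk}(1), which is where the actual homological content resides; everything else is a bookkeeping identification of bases and differentials.
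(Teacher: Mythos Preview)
Your proof is correct and follows essentially the same approach as the paper: it is the composition of the identification $\mO_j\mS_k\mD_\ell\GC_n = \mD^0\MGC_n$ (noted just before the corollary), the quasi-isomorphism of Proposition~\ref{prop:SmallSk}(1), and the isomorphisms $\kappa$ and $\eta$ of Propositions~\ref{prop:SkEq} and~\ref{prop:DOSD}. Your version is in fact more detailed than the paper's one-line proof, as you explicitly verify the identification $\Sigma_n^0 \cong \Sigma^\mu[1-n]$ and trace the formula on generators.
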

\begin{proof}
The direct consequence of Propositions \ref{prop:DOSD} (c.f.\ \eqref{def:DOSDGC}), \ref{prop:SkEq} and \ref{prop:SmallSk} (1).
\end{proof}

\section{The construction of the quasi-isomorphism}
\label{s:qi}

In this section we prove the last two parts of Theorem \ref{thm:main}, that there are quasi-isomorphisms $h:\mO_j\mS_k\mD_\ell\GC_n\rightarrow\mO_{j+1}\mS_k\mD_\ell\GC_{n+1}\hookrightarrow\mO_j\mS_{k+1}\mD_\ell\GC_{n+1}$ for all $j,k,\ell\geq 0$. Because of Propositions \ref{prop:DOSD}, \ref{prop:SkEq} and \ref{prop:SmallSk} it is enough to give quasi-isomorphisms $h:\MGC_n\rightarrow\mO^1\MGC_{n+1}\hookrightarrow\mS^2\MGC_{n+1}$. We actually define $h$ on the distinguished versions, i.e.\ $h:\mM\bar\mV\bar\mE\GC_n\rightarrow\mO^1\mM\bar\mV\bar\mE\GC_{n+1}\hookrightarrow\mS^2\mM\bar\mV\bar\mE\GC_{n+1}$. It has to be checked that $h$ is compatible with permuting edges and vertices.
The final proof will use the spectral sequence on the number of vertices that leaves only the edge differential as the first differential. Since on the first page the number of edges and vertices are not changed, we can use the result for distinguishable edges and vertices.

\subsection{The map}
\label{ss:tm}

Recall that
$$
\mM\bar\mV_v\bar\mE_e\GC_n=
%\mO_j\mS_k\mD_\ell \bar\mV_v\bar\mE_e^{\Sigma^{(-1)^n}[1-n]}\Grac_n^{\varnothing\nrightarrow}=
\left\langle\mM\bar\mV_v\bar\mE_e\grac_n^{\varnothing\nrightarrow}\right\rangle
\otimes_{S_2^{\times e}}\left(\Sigma^{(-1)^n}[1-n]\right)^{\otimes e}=
\left\langle\mM\bar\mV_v\bar\mE_e\grac_n^{\varnothing\nrightarrow}\right\rangle
\otimes_{S_2^{\times e}}\left\langle\sigma_n^{0,0}\right\rangle^{\otimes e},
$$
$$
\mO^1\mM\bar\mV_v\bar\mE_e\GC_{n+1}\subset
\mD^1\mM\bar\mV_v\bar\mE_e\GC_{n+1}=
\left\langle\mM\bar\mV_v\bar\mE_e\grac_{n+1}^{\varnothing\nrightarrow}\right\rangle
\otimes_{S_2^{\times e}}
\left\langle\sigma_{n+1}^{1,1}\right\rangle^{\otimes e},
$$
$$
\mS^2\mM\bar\mV_v\bar\mE_e\GC_{n+1}\subset
\mD^2\mM\bar\mV_v\bar\mE_e\GC_{n+1}=
\left\langle\mM\bar\mV_v\bar\mE_e\grac_{n+1}^{\varnothing\nrightarrow}\right\rangle
\otimes_{S_2^{\times e}}
\left\langle\sigma_{n+1}^{2,2}\right\rangle^{\otimes e},
$$
where
$$
\sigma_n^{0,0}=\{\Es\},\quad
\sigma_{n+1}^{1,1}=\{\Ed,\dE,\Ess\},\quad
\sigma_{n+1}^{2,2}=\{\Ed,\dE,\dEd,\EdE,
\Esss\}.
$$

Inclusion $\mO^1\mM\bar\mV_v\bar\mE_e\GC_{n+1}\hookrightarrow\mS^2\mM\bar\mV_v\bar\mE_e\GC_{n+1}$ is clear. We are going to construct the map $h:\mM\bar\mV_v\bar\mE_e\GC_n\rightarrow\mO^1\mM\bar\mV_v\bar\mE_e\GC_{n+1}$ on the base graphs through its representatives. One needs to check that it is well defined, but all relations on base graph representatives are \eqref{eq:BaseRel} that come from reverting edges, and they will be easy to check. Note that spaces of core graphs are the same in $\mM\bar\mV_v\bar\mE_e\GC_n$ and $\mO^1\mM\bar\mV_v\bar\mE_e\GC_{n+1}$, and the map will, as expected, not change the core graph.

Let $(\Gamma,\dots)\in\mM\bar\mV_v\bar\mE_e\grac_{n+1}^{\varnothing\nrightarrow}\times\langle\Es\rangle^{\times e}$ be a representative of a base graph in $\mM\bar\mV_v\bar\mE_e\grac_n^{\varnothing\nrightarrow}$. Since the space of edge types is 1-dimensional, it is equivalent to choosing a core graph $\Gamma$. We call a \emph{spanning tree} of $\Gamma$ a connected sub-graph without cycles which contains all its vertices. Note that all spanning trees have $v-1$ edges. Let $S(\Gamma)$ be the set of all spanning trees of $\Gamma$.

For a chosen vertex $x\in V(\Gamma)$ and a spanning tree $\tau\in S(\Gamma)$ we define $h'_{x,\tau}(\Gamma)\in\mO^1\mM\bar\mV_v\bar\mE_e\GC_{n+1}$ as follows.
The core graph is again $\Gamma$ with relabelled vertices and edges as being discussed later. Coloured directions (maps $o_c$) remain the same.
Edges that are in $E(\tau)$ get the type $\Ed$ or $\dE$, such that the arrow goes in the direction away from the vertex $x$. Edges that are not in $E(\tau)$ get the type $\Ess$, in the core direction. The result gets a pre-factor 
\begin{equation}
\label{eq:sgnrinv}
(-1)^{rn}
\end{equation}
where $r$ is the number of edges in $E(\tau)$ that get a type $\dE$, i.e.\ the number of edges whose type direction (the one going away from $x$) is the opposite to the core orientation. An example of $h'_{x,\tau}$ is given in Figure \ref{fig:hxtau}.

\begin{figure}[H]
$$
\begin{tikzpicture}[baseline=1ex,scale=1.5]
 \node[int] (a) at (-1,-.5) {};
 \node[int] (b) at (1,-.5) {};
 \node[int] (c) at (0,0) {};
 \node[int] (d) at (0,1.1) {};
 \node[below] at (a) {};
 \node[below] at (b) {};
 \node[below] at (c) {};
 \node[above] at (d) {$x$};
 \draw (a) edge[->] (b);
 \draw (a) edge[<-] (c);
 \draw (a) edge[->] (d);
 \draw (b) edge[->] (c);
 \draw (b) edge[<-] (d);
 \draw (c) edge[->] (d);
\end{tikzpicture}\quad
\mxto{h'_{x,
\begin{tikzpicture}[baseline=0ex,scale=.1]
 \draw (-1,-.5) edge (0,0);
 \draw (1,-.5) edge (0,0);
 \draw (0,0) edge (0,1.1);
\end{tikzpicture}}}
\quad (-1)^{2n}
\begin{tikzpicture}[baseline=1ex,scale=1.5]
 \node[int] (a) at (-1,-.5) {};
 \node[int] (b) at (1,-.5) {};
 \node[int] (c) at (0,0) {};
 \node[int] (d) at (0,1.1) {};
 \node[below] at (a) {};
 \node[below] at (b) {};
 \node[below] at (c) {};
 \node[above] at (d) {$x$};
 \draw (a) edge[latex-] (c);
 \draw (b) edge[latex-] (c);
 \draw (c) edge[latex-] (d);
 \draw (a) edge[->,crossed] (b);
 \draw (a) edge[->,crossed] (d);
 \draw (b) edge[<-,crossed] (d);
\end{tikzpicture}
$$
\caption[]{\label{fig:hxtau}
An example of the map $h'_{x,\tau}$ on a tetrahedron graph, with the chosen vertex $x$ the top one, and with the chosen tree the middle star. Edges in the star get the types $\Ed$ in the direction away from $x$, and other edges get the type $\Ess$. Pre-factor is $(-1)^{2n}$ since 2 edges from the tree take the direction opposite of the core one.}
\end{figure}
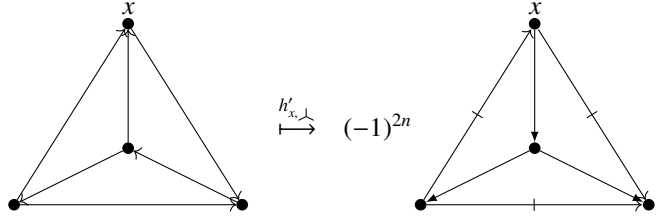

Reversing an edge in $E(\tau)$ changes the sign by $(-1)^n$ and changes the number $r$ by one, so reversed base graph is sent to the same graph. Reversing edges not in $E(\tau)$ also changes the sign by $(-1)^n$, and they are sent to $\Ess$ whose reversing changes the sign by $-(-1)^{n+1}=(-1)^n$. So the map is well defined on base graphs.

Since edge types $\Ed$ and $\dE$ that could form a type cycle are given to a tree, there can not be a type cycle in $h'_{x,\tau}(\Gamma)$, so it is indeed in $\mO^1\mM\bar\mV_v\bar\mE_e\GC_{n+1}$.

We still need to label edges and vertices. We want to do it such that the final map is compatible with permuting edges and vertices. Since the graph complex parameter has changed from $n$ to $n+1$, parities are different. Indeed vertices in the domain complex and edges of type $\Ed$ in the codomain complex have the parity $(-1)^n$, while edges in the domain complex and vertices and edges of type $\Ess$ in the codomain complex have the parity $-(-1)^n$. The map will commute with permuting if elements get labels from elements of the same parity. Therefore, vertices in $h'_{x,\tau}(\Gamma)$ will get labels from edges in $\Gamma$, particularly edges in $\tau$, edges of type $\Ess$ in $h'_{x,\tau}(\Gamma)$ will get labels from other edges in $\Gamma$, and edges of type $\Ed$ and $\dE$ in $h'_{x,\tau}(\Gamma)$ will get labels from vertices in $\Gamma$. An edge in $\tau$ corresponds to the vertex where its given type direction heads to, that is the vertex at its end away from the chosen vertex $x$.

The model for labelling is the graph $\Gamma$ where the chosen vertex is the last one with the label $v$, and all edges in $E(\tau)$ come before other edges. This labelling we call \emph{model labelling}. An example is given in Figure \ref{fig:model}.

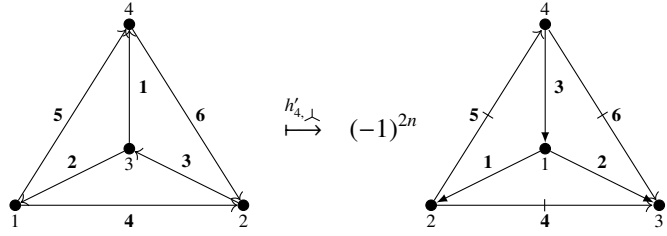
\begin{figure}[H]
$$
\begin{tikzpicture}[baseline=1ex,scale=1.5]
 \node[int] (a) at (-1,-.5) {};
 \node[int] (b) at (1,-.5) {};
 \node[int] (c) at (0,0) {};
 \node[int] (d) at (0,1.1) {};
 \node[below] at (a) {$\scriptstyle 1$};
 \node[below] at (b) {$\scriptstyle 2$};
 \node[below] at (c) {$\scriptstyle 3$};
 \node[above] at (d) {$\scriptstyle 4$};
 \draw (a) edge[->] node[below] {$\scriptstyle {\mathbf 4}$} (b);
 \draw (a) edge[<-] node[above] {$\scriptstyle {\mathbf 2}$} (c);
 \draw (a) edge[->] node[left] {$\scriptstyle {\mathbf 5}$} (d);
 \draw (b) edge[->] node[above] {$\scriptstyle {\mathbf 3}$} (c);
 \draw (b) edge[<-] node[right] {$\scriptstyle {\mathbf 6}$} (d);
 \draw (c) edge[->] node[right] {$\scriptstyle {\mathbf 1}$} (d);
\end{tikzpicture}\quad
\mxto{h'_{4,
\begin{tikzpicture}[baseline=0ex,scale=.1]
 \draw (-1,-.5) edge (0,0);
 \draw (1,-.5) edge (0,0);
 \draw (0,0) edge (0,1.1);
\end{tikzpicture}}}
\quad (-1)^{2n}
\begin{tikzpicture}[baseline=1ex,scale=1.5]
 \node[int] (a) at (-1,-.5) {};
 \node[int] (b) at (1,-.5) {};
 \node[int] (c) at (0,0) {};
 \node[int] (d) at (0,1.1) {};
 \node[below] at (a) {$\scriptstyle 2$};
 \node[below] at (b) {$\scriptstyle 3$};
 \node[below] at (c) {$\scriptstyle 1$};
 \node[above] at (d) {$\scriptstyle 4$};
 \draw (a) edge[latex-] node[above] {$\scriptstyle {\mathbf 1}$} (c);
 \draw (b) edge[latex-] node[above] {$\scriptstyle {\mathbf 2}$} (c);
 \draw (c) edge[latex-] node[right] {$\scriptstyle {\mathbf 3}$} (d);
 \draw (a) edge[->,crossed] node[below] {$\scriptstyle {\mathbf 4}$} (b);
 \draw (a) edge[->,crossed] node[left] {$\scriptstyle {\mathbf 5}$} (d);
 \draw (b) edge[<-,crossed] node[right] {$\scriptstyle {\mathbf 6}$} (d);
\end{tikzpicture}
$$
\caption[]{\label{fig:model}
An example of model labelling, i.a.\ a core graph $\Gamma$ with $v$ vertices and $e$ edges, the chosen vertex $v$ and a chosen spanning tree $\tau$ that consists of edges $\{1,\dots,v-1\}$. The image $h'_{v,\tau}(\Gamma)$ with labels is written on the right. Label of the vertex $v$ remains the same; label of edges $\{v,\dots,e\}$ that are not in $\tau$ remain the same; and labels of edges $\{1,\dots,v-1\}$ that are in $\tau$ and vertices $\{1,\dots,v-1\}$ are exchanged such that an edge corresponds to its end away from the chosen vertex $v$.
}
\end{figure}

Let $\chi\in S_v\times S_e$ permute vertices and edges such that the last vertex $v$ is fixed and sets $\{1,\dots,v-1\}$ and $\{v,\dots,e\}$ of edges are fixed. The definition is set such that $h'_{v,\tau}(\chi\Gamma)$ is sent to $\chi'h'_{v,\tau}(\Gamma)$ for some $\chi'\in S_v\times S_e$.

For an arbitrary labelled graph we first relabel it to the model labelling by permutation $\chi\in S_v\times S_e$. Note that the action of $\chi$ changes the sign as in Equations \eqref{def:SeSigma} and \eqref{eq:actSv}. The precise calculation of the sign is not essential and we leave it to the reader. We can make a convention not to change the order of vertices and edges if not necessarily. Then we apply the map $h'_{\chi(x)=v,\chi(\tau)}$. By this very construction, for every $\chi\in S_v\times S_e$, $h'_{v,\tau}(\chi\Gamma)$ is sent to $\chi'h'_{v,\tau}(\Gamma)$ for some $\chi'\in S_v\times S_e$. Therefore the quotient of $h$ to the space of graphs with permuted edges and vertices is well defined.

\begin{rem}
It is possible to label vertices and edges with the same set of $e+v$ labels without any constraints which labels are used for vertices and which for edges. Permutations of that labels give signs as in Equation \eqref{def:SeSigma}, taking into account parities of particular elements (vertices and edges of particular type).
With this convention, switching labels in the map $h$ becomes trivial.
\end{rem}

Strictly speaking, the core graphs of $\Gamma$ and $h_{x,\tau}(\Gamma)$ are not the same if the labellings are not the same. 
We indeed want them to be the same, so let
\begin{equation}
\label{eq:sgnRelab}
\hat h_{x,\tau}(\Gamma):=\chi h'_{x,\tau}(\Gamma),
\end{equation}
where $\chi\in S_v\times S_e$ relabels vertices and edges back to the labelling of $\Gamma$ and gives appropriate sign.

Now let
\begin{equation}
\label{def:h}
h_{x,\tau}(\Gamma)=(-1)^v(-1)^{(n+1)ev}\hat h_{x,\tau}(\Gamma),
\end{equation}
\begin{equation}
h(\Gamma):=\sum_{x\in V(\Gamma)}(v(x)-2)\sum_{\tau\in S(\Gamma)}h_{x,\tau}(\Gamma),
\end{equation}
where $v(x)$ is the valence of the vertex $x$ in $\Gamma$. The upper discussion implies that the quotient map
\begin{equation}
h:\mM\mV_v\mE_e\GC_n\rightarrow\mO^1\mM\mV_v\mE_e\GC_{n+1}
\end{equation}
is well defined. It is extended to $h:\mM\GC_n\rightarrow\mO^1\mM\GC_{n+1}$.
We have the following proposition.

\begin{prop}
The map $h:\mM\GC_n\rightarrow\mO^1\mM\GC_{n+1}$ is a map of complexes of degree 0, i.e.\ $\delta h(\Gamma)=h(\delta\Gamma)$ for every $\Gamma\in\mM\GC_n$.
\end{prop}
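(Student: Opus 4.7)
Since the domain $\MGC_n$ has only one edge type, its differential reduces to $\delta_C$, while the codomain carries $\delta = \delta_C + (-1)^{(n+1)\deg}\delta_E$. The claim therefore unwinds to
$$
\delta_C h(\Gamma) + (-1)^{(n+1)\deg\Gamma}\,\delta_E h(\Gamma) \;=\; h(\delta_C\Gamma),
$$
which I plan to verify by expanding both sides on base-graph representatives and matching term by term, using one combinatorial bijection and two layers of sign control.

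The combinatorial input is the standard bijection between spanning trees $\tau'$ of $c_a\Gamma$ and spanning trees $\tau$ of $\Gamma$ containing the edge $a$, given by $\tau' = \tau\setminus\{a\}$. A chosen vertex $x'$ of $c_a\Gamma$ lifts uniquely to $x\in V(\Gamma)$ when $x'$ is not the merged vertex, and to the two choices $x\in\{\Gamma_-(a),\Gamma_+(a)\}$ when it is; the valence identity $v(\Gamma_-(a))+v(\Gamma_+(a))-2 = v'(x'_{\mathrm{merged}})$ then gives $(v(\Gamma_-(a))-2)+(v(\Gamma_+(a))-2) = v'(x'_{\mathrm{merged}})-2$, so the weighted sums agree. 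Using this correspondence I would match $\delta_C h(\Gamma) = \sum_{x,\tau,b\in\tau}(v(x)-2)\,c_b h_{x,\tau}(\Gamma)$ against $h(\delta_C\Gamma) = \sum_a\sum_{x',\tau'}(v'(x')-2)\,h_{x',\tau'}(c_a\Gamma)$ by identifying $b=a$ and $\tau' = \tau\setminus\{a\}$: contracting a tree edge preserves the away-from-$x$ structure on the surviving tree, so $c_b h_{x,\tau}(\Gamma)$ equals $h_{x',\tau'}(c_b\Gamma)$ up to a sign controlled by $C(\Ed)=1$ versus $C(\dE)=(-1)^{n+1}$.

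The hard part is reconciling the $\delta_E h(\Gamma)$ contribution. Applying $\delta_E$ to an $\Ess$-type non-tree edge $a\notin\tau$ produces (by Table \ref{tbl:difsknew}) the dotted length-$1$ edge $\Ed - (-1)^{n+1}\dE$, and the resulting graph is not of the form $h_{x,\tau'}(\Gamma)$ for any tree $\tau'$, because the newly created $\Ed$ or $\dE$ on $a$ points in the core direction rather than the away-from-$x$ direction of a spanning tree. I propose to handle this via a cycle-rotation argument: adding $a$ to $\tau$ creates a unique cycle $C(a,\tau)$, and for each choice of $x$ the $\delta_E$ contributions indexed by pairs $(\tau,a)$ sharing the cycle $C$ should be reorganised, together with the $\dE$-sign corrections from the $\delta_C$ matching of the previous paragraph, into a telescoping sum along the edges of $C$ that cancels. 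The weight $(v(x)-2)$ and the prefactor $(-1)^{rn}$ of \eqref{eq:sgnrinv} built into $h$ appear to be tuned precisely for this cancellation. The remaining signs coming from \eqref{def:deltaEsgn}, \eqref{def:CaE}, \eqref{def:h} and the relabelling \eqref{eq:sgnRelab} can be tracked systematically by fixing the model labelling of Figure \ref{fig:model} (with $x=v$ and $\tau$-edges labelled $1,\dots,v-1$), reducing the final verification to a finite number of case checks on the type of the relevant edge and on whether $x$ is an endpoint of it.
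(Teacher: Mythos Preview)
Your combinatorial ingredients are right --- the spanning-tree bijection $\tau'\leftrightarrow\tau\ni a$, the valence identity, and the cycle-rotation telescoping --- but your organisation of the argument conflates two cancellations that in fact occur \emph{separately}. The paper shows
\[
\delta_C\, h(\Gamma) \;=\; h(\delta\Gamma)
\qquad\text{and}\qquad
\delta_E\, h(\Gamma) \;=\; 0
\]
as two independent identities. The first follows from an exact equality $h_{x',c_a(\tau)}(c_a\Gamma)=c_a\bigl(h_{x,\tau}(\Gamma)\bigr)$ (the paper's internal Lemma), with no residual sign: the $(-1)^{rn}$ prefactor in $h$ and the value $C(\dE)=(-1)^{n+1}$ are calibrated so that this commutation holds on the nose, once one passes to model labelling. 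So your ``$\dE$-sign corrections from the $\delta_C$ matching'' do not exist, and there is nothing to feed into the $\delta_E$ cancellation.

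The second identity $\delta_E h(\Gamma)=0$ is proved for each fixed vertex $x$ separately: $\sum_{\tau}\delta_E h_{x,\tau}(\Gamma)=0$, by regrouping the sum over pairs $(\tau,a\notin\tau)$ as a sum over one-cycle subgraphs $\rho$ and edges $a$ in the cycle of $\rho$, then telescoping around the cycle. The weight $(v(x)-2)$ plays no role here; it is used only in the $\delta_C$ calculation to make the valence bookkeeping at the merged vertex come out right, exactly as you noted. Your plan to mix the two layers would work if you got all signs right, but it obscures the structure and makes the sign verification harder than it needs to be. Decouple the two pieces and each becomes a clean check.
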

\begin{proof}
One easily calculates that degree of $h$ is 0.
For the other claim let $\Gamma\in\mM\mV_v\mE_e\GC_n$.
By abuse of notation, we will also denote by $\Gamma$ any suitable core graph that represents $\Gamma$.
It holds that
$$
h(\delta\Gamma)=
h\left(\sum_{a\in E(\Gamma)}c_a(\Gamma)\right)=
\sum_{a\in E(\Gamma)}h\left(c_a(\Gamma)\right)=
%(-1)^{(n+1)ev}
\sum_{a\in E(\Gamma)}\sum_{x\in V(c_a(\Gamma))}
\left(v_{c_a(\Gamma)}(x)-2\right)\sum_{\tau\in S(c_a(\Gamma))}h_{x,\tau}(c_a(\Gamma))
$$
where $c_a(\Gamma)$ is contracting an edge $a$ in $\Gamma$.
Spanning trees of $c_a(\Gamma)$ are in natural bijection with spanning trees of $\Gamma$ that contain $a$, $c_a(\tau)\leftrightarrow\tau$, so we can write
$$
h(\delta\Gamma)=
\sum_{a\in E(\Gamma)}
\sum_{\substack{\tau\in S(\Gamma)\\ a\in E(\tau)}}\sum_{x\in V(c_a(\Gamma))}
\left(v_{c_a(\Gamma)}(x)-2\right)h_{x,c_a(\tau)}(c_a(\Gamma))=\\
\sum_{\tau\in S(\Gamma)}\sum_{a\in E(\tau)}\sum_{x\in V(c_a(\Gamma))}
\left(v_{c_a(\Gamma)}(x)-2\right)h_{x,c_a(\tau)}(c_a(\Gamma)).
$$

\begin{lemma}
Let $\Gamma\in\mM\mV_v\mE_e\GC_n$, $\tau\in S(\Gamma)$ and $a\in E(\tau)$. Then
\begin{enumerate}
\item if $x\in V(\Gamma)$, $x\neq\Gamma_\pm(a)$ then
\begin{equation}
h_{x,c_a(\tau)}(c_a(\Gamma))=c_a(h_{x,\tau}(\Gamma)),
\end{equation}
\item if $x$ is produced by contracting $a$ then
\begin{equation}
h_{x,c_a(\tau)}(c_a(\Gamma))=c_a\left(h_{\Gamma_-(a),\tau}(\Gamma)\right)=c_a\left(h_{\Gamma_+(a),\tau}(\Gamma)\right)
\end{equation}
where $\Gamma_-(a)$ and $\Gamma_+(a)$ are ends of the edge $a$.
\end{enumerate}
\end{lemma}
\begin{proof}
Ignoring signs the equalities are clear. One only needs to check the sign. For contracting, according to Remark \ref{rem:last}, we use the representative where the last edge heading towards the last vertex is being contracted, and for $h_{x,\tau}$ we use model labelling. 
\begin{enumerate}
\item Chose a representative $\Gamma$ with model labelling, i.e.\ $x=v$ and $E(\tau)$ is labelled by $\{1,\dots,v-1\}$. Moreover, let all edges in $E(\tau)$ head away from $v$ and let $a=v-1$ and $\Gamma_+(a)=v-1$.

The map $h_{v-1,\tau}$ gives a sign $(-1)^v(-1)^{(n+1)ev}$ and switches labels between $E(\tau)$ and $V(\Gamma)$, thus $a$ is still labelled by $v-1$ and heads towards $v-1$. For contracting $a$ we switch edges $v-1$ and $e$, giving sign $(-1)^{(2-(n+1))(e-v)}=(-1)^{(n+1)(e-v)}$ because an edge of type $\Ess$ that has degree $(2-(n+1)$ jumps over $e-v$ other edges of the same type, and we switch vertices $v$ and $v-1$, giving sign $(-1)^{n+1}$. The total sign is $(-1)^v(-1)^{(n+1)(e+1)(v+1)}$. Then we contract $e$.

On the other side, we first switch labels between edges $v-1$ and $e$, and between vertices $v-1$ and $v$, giving sign $(-1)^n(-1)^{1-n}=-1$. Then we can contract $e$. Labelling of the resulting graph is model labelling, so we can act by $h$ giving the sign $(-1)^{v-1}(-1)^{(n+1)(e-1)(v-1)}$. The resulting graph has the same labelling as the result of the other side, with the total sign again $(-1)^v(-1)^{(n+1)(e+1)(v+1)}$.

\item A similar argument is left to the reader.
\end{enumerate}
\end{proof}

For $x\in V(c_a(\Gamma))$ we have two choices, either $x\in V(\Gamma)$ not adjacent to $a$, or $x$ is produced by contraction of $a$. Both cases are covered with the lemma.
Since in the first case the chosen vertex does not change valence after applying $c_a$, and it holds that $v_{c_a(\Gamma)}(x)=v_\Gamma(\Gamma_-(a))+v_\Gamma(\Gamma_+(a))-2$ in the second case, it follows that
\begin{multline*}
%\label{eq:hdelta}
h(\delta\Gamma)=\sum_{\tau\in S(\Gamma)}\sum_{a\in E(\tau)}
 \left(\left(v_\Gamma(\Gamma_-(a))+v_\Gamma(\Gamma_+(a))-4\right)c_a(h_{\Gamma_-(a),\tau}(\Gamma))+\sum_{\substack{x\in V(\Gamma)\\x\notin\{\Gamma_-(a),\Gamma_+(a)\}}}(v_{\Gamma}(x)-2)c_a(h_{x,\tau}(\Gamma))\right)=\\
 =\sum_{\tau\in S(\Gamma)}\sum_{a\in E(\tau)}\sum_{x\in V(\Gamma)}(v_{\Gamma}(x)-2)c_a(h_{x,\tau}(\Gamma))=
 \sum_{x\in V(\Gamma)}(v_{\Gamma}(x)-2)\sum_{\tau\in S(\Gamma)}\sum_{a\in E(\tau)}c_a(h_{x,\tau}(\Gamma)).
\end{multline*}

On the other side the differential is $\delta=\delta_C\pm\delta_E$. The sign before $\delta_E$ is not important as will be clear soon. In $h(\Gamma)$ we can contract only edges in $E(\tau)$ so
\begin{multline*}
\delta_C h(\Gamma)=
\delta_C\left(\sum_{x\in V(\Gamma)}(v_\Gamma(x)-2)\sum_{\tau\in S(\Gamma)}h_{x,\tau}(\Gamma)\right)=
\sum_{x\in V(\Gamma)}(v_\Gamma(x)-2)\sum_{\tau\in S(\Gamma)}\delta_C\left(h_{x,\tau}(\Gamma)\right)=\\
\sum_{x\in V(\Gamma)}(v_\Gamma(x)-2)\sum_{\tau\in S(\Gamma)}\sum_{a\in E(\tau)}c_a\left(h_{x,\tau}(\Gamma)\right)=
h(\delta(\Gamma)).
\end{multline*}
Therefore, to finish the proof it is enough to show that $\delta_E(h(\Gamma))=0$. We have
$$
\delta_E(h(\Gamma))=
\delta_E\left(\sum_{x\in V(\Gamma)}(v_\Gamma(x)-2)\sum_{\tau\in S(\Gamma)}h_{x,\tau}(\Gamma)\right)=
\sum_{x\in V(\Gamma)}(v_\Gamma(x)-2)\sum_{\tau\in S(\Gamma)}\delta_E\left(h_{x,\tau}(\Gamma)\right)=0,
$$
by the following lemma.
\end{proof}

\begin{lemma}
Let $\Gamma\in\mM\mV_v\mE_e\GC_n$ and $x\in V(\Gamma)$. Then
\begin{equation}
\sum_{\tau\in S(\Gamma)}\delta_E\left(h_{x,\tau}(\Gamma)\right)=0.
\end{equation}
\end{lemma}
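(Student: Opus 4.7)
By Table \ref{tbl:difsknew}, $\delta_E$ annihilates $\Ed$ and $\dE$ and sends $\Ess$ to the length-one dotted arrow, which in the $\sigma_n^\infty$-basis equals $\Ed - (-1)^n \dE$. Since in $h_{x,\tau}(\Gamma)$ the tree edges carry types $\Ed$ or $\dE$ (directed away from $x$, with prefactor $(-1)^{rn}$ from \eqref{eq:sgnrinv}) and each non-tree edge carries $\Ess$, we obtain
\[
\sum_{\tau \in S(\Gamma)} \delta_E(h_{x,\tau}(\Gamma)) \;=\; \sum_{\tau}\sum_{b \notin E(\tau)} \pm\,\Gamma_{\tau,b},
\]
where $\Gamma_{\tau,b}$ is $h_{x,\tau}(\Gamma)$ with the type on $b$ replaced by $\Ed - (-1)^n\dE$ in the core direction.

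The plan is to reorganize this double sum according to the unicyclic subgraph $\tau \cup \{b\}$. Such a subgraph is determined by a cycle $C \subseteq \Gamma$ together with a spanning forest $T_{\mathrm{out}}$ of $\Gamma\setminus E(C)$ whose components are rooted at the vertices of $C$. This data determines a unique vertex $y \in V(C)$ nearest to $x$ in $T_{\mathrm{out}}$, and the remaining freedom is the choice of which edge of $C$ plays the role of $b$. Labelling $V(C) = \{y=w_0, w_1, \dots, w_{k-1}\}$ cyclically and $E(C) = \{e_1,\dots,e_k\}$ with $e_i$ joining $w_{i-1}$ and $w_i$, the contribution from a fixed $(C, T_{\mathrm{out}})$ is a sum over pairs $(j,\epsilon) \in \{1,\dots,k\}\times\{+,-\}$, where $b = e_j$ carries direction $\epsilon$.

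A direct orientation computation then shows that for $b = e_j$ the tree edges of $C$ have type $\Ed$ on $e_i$ for $i<j$ and $\dE$ on $e_i$ for $i>j$, while $e_j$ takes type $\Ed$ for $\epsilon = +$ and $\dE$ for $\epsilon = -$. Consequently the configurations $(j,+)$ and $(j{+}1,-)$ yield the same directed graph on $C$, each carrying $\Ed$ on $e_i$ for $i \le j$ and $\dE$ on $e_i$ for $i \ge j{+}1$. The extremal configurations $(k,+)$ and $(1,-)$ give honest type-cycles around $C$ and therefore vanish in $\mO^1\MGC_{n+1}$, while the remaining $2(k-1)$ terms pair up as $(j,+) \leftrightarrow (j{+}1,-)$ for $j = 1,\dots,k-1$.

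The main obstacle is sign tracking. In passing from $\tau(j)$ to $\tau(j{+}1)$ the count $r$ in the prefactor $(-1)^{rn}$ of \eqref{eq:sgnrinv} drops by exactly one (one right-arm $\dE$ edge is replaced by a left-arm $\Ed$ edge), producing a ratio of $(-1)^n$ between the prefactors; the expansion $\Ed - (-1)^n\dE$ contributes a ratio of $-(-1)^n$ between the $+$ and $-$ branches; the residual signs coming from the $\delta_E^{(b)}$ prefactor \eqref{def:deltaEsgn}, the relabelling \eqref{eq:sgnRelab}, and the overall factor $(-1)^v(-1)^{(n+1)ev}$ in \eqref{def:h} agree between the two paired terms because the underlying core graph and its labelling are unchanged by the swap $b = e_j \leftrightarrow e_{j+1}$. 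The product of the two genuine ratios is $(-1)^n \cdot (-(-1)^n) = -1$, so each pair cancels; summing over all choices of $(C, T_{\mathrm{out}})$ yields $\sum_{\tau} \delta_E(h_{x,\tau}(\Gamma))=0$.
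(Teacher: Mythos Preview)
Your approach is the same as the paper's: reorganize the double sum over $(\tau,b)$ by the unicyclic subgraph $\rho=\tau\cup\{b\}$, identify the vertex $y\in C(\rho)$ closest to $x$, and then show that adjacent choices of $b$ around the cycle cancel pairwise while the two extremal terms vanish because they contain a type-cycle. The paper does exactly this, saying only ``careful calculation of the sign shows that those two terms are cancelled'' at the key step.

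Where your version goes wrong is in the explicit sign-tracking. First, there is a parameter slip: $h$ lands in $\mO^1\MGC_{n+1}$, whose edge types live in $\Sigma_{n+1}^{sk}$, so by Table~\ref{tbl:sedg} the length-one dotted arrow is $\Ed-(-1)^{n+1}\dE=\Ed+(-1)^n\dE$, not $\Ed-(-1)^n\dE$. This flips one of the two ratios you multiply together. Second, and more seriously, your assertion that the residual signs from \eqref{def:deltaEsgn} and \eqref{eq:sgnRelab} ``agree between the two paired terms because the underlying core graph and its labelling are unchanged'' is not justified: the relabelling in $\hat h_{x,\tau}$ depends on $\tau$ (it is the tree that determines which edges are moved into the first $v-1$ positions and how vertices and tree-edges exchange labels), and the $\delta_E^{(b)}$ prefactor depends on the label of $b$; both change when $b$ moves from $e_j$ to $e_{j+1}$. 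Finally, your claim that $r$ drops by exactly one conflates the cycle orientation (which you use to say ``left arm carries $\Ed$, right arm carries $\dE$'') with the \emph{core} orientation, which is what $r$ actually counts in \eqref{eq:sgnrinv}. The cancellation does work, but not for the reasons you give.
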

\begin{proof}
Edge differential acts on edges of type $\Ess$, that are those labelled by $\{v,\dots,e\}$ in $h_{x,\tau}(\Gamma)$. So
$$
N(\Gamma,x):=
\sum_{\tau\in S(\Gamma)}\delta_E\left(h_{x,\tau}(\Gamma)\right)=
\sum_{\tau\in S(\Gamma)}\sum_{a=v}^e\delta_E^{(a)}\left(h_{x,\tau}(\Gamma)\right),
$$
where $\delta_E^{(a)}$ maps $a$-th edge $\Ess\mapsto
\begin{tikzpicture}[baseline=-.65ex,scale=.5]
 \node[nil] (a) at (0,0) {};
 \node[nil] (c) at (1,0) {};
 \draw (a) edge[dotted,->] (c);
\end{tikzpicture}
=\Ed-(-1)^{n+1}\dE$ with a proper sign from \eqref{def:deltaEsgn}.

Terms in the above relation can be summed in another order. Let $CT(\Gamma)$ be the set of all connected sub-graphs $\rho$ of $\Gamma$ which contain all vertices and which has one more edge than a spanning tree (has one cycle), and let $C(\rho)$ be the set of edges in the cycle of $\rho$. Clearly, $\rho\setminus a$ for $a\in C(\rho)$ is a spanning tree of $\Gamma$ and sets $\{(\tau,a)|\tau\in S(\Gamma),a\in E(\Gamma)\\ E(\tau)\}$ and $\{(\rho,a)|\chi\in CT(\Gamma),a\in C(\rho)\}$ are bijective, so
$$
N(\Gamma,x)=\sum_{\rho\in CT(\Gamma)}\sum_{a\in C(\rho)}\delta_E^{(a)}\left(h_{x,\tau}(\Gamma)\right).
$$

It is now enough to show that
$$
\sum_{a\in C(\rho)}\delta_E^{(a)}\left(h_{x,\tau}(\Gamma)\right)=0
$$
for every $x\in V(\Gamma)$ and for every $\rho\in CT(\Gamma)$.
Let $y\in V(\Gamma)$ be the vertex in the cycle of $\rho$ closest to vertex $x$ (along $\rho$). After choosing $a\in C(\rho)$, that cycle in $h_{x,\chi\setminus a}(\Gamma)$ has one edge of type $\Ess$, and other edges of type $\Ed$ or $\dE$ with direction from $y$ to that the edge of type $\Ess$, such as in the following diagram.
$$
\begin{tikzpicture}[baseline=0ex,scale=.7]
 \node[int] (a) at (0,1.1) {};
 \node[int] (b) at (1,.5) {};
 \node[int] (c) at (1,-.5) {};
 \node[int] (d) at (0,-1.1) {};
 \node[int] (e) at (-1,-.5) {};
 \node[int] (f) at (-1,.5) {};
 \node[above] at (a) {$\scriptstyle y$};
 \draw (a) edge[-latex] (b);
 \draw (b) edge[-latex] (c);
 \draw (a) edge[-latex] (f);
 \draw (f) edge[-latex] (e);
 \draw (e) edge[-latex] (d);
 \draw (c) edge[->,crossed] (d);
\end{tikzpicture}
$$
After acting by $\delta_E^{(a)}$ the $\Ess$ is replaced by $\Ed+(-1)^n\dE$, like in the following diagram.
$$
\begin{tikzpicture}[baseline=-.6ex,scale=.7]
 \node[int] (a) at (0,1.1) {};
 \node[int] (b) at (1,.5) {};
 \node[int] (c) at (1,-.5) {};
 \node[int] (d) at (0,-1.1) {};
 \node[int] (e) at (-1,-.5) {};
 \node[int] (f) at (-1,.5) {};
 \node[above] at (a) {$\scriptstyle y$};
 \draw (a) edge[-latex] (b);
 \draw (b) edge[-latex] (c);
 \draw (a) edge[-latex] (f);
 \draw (f) edge[-latex] (e);
 \draw (e) edge[-latex] (d);
 \draw (c) edge[-latex] (d);
\end{tikzpicture}
\quad+(-1)^n\quad
\begin{tikzpicture}[baseline=-.6ex,scale=.7]
 \node[int] (a) at (0,1.1) {};
 \node[int] (b) at (1,.5) {};
 \node[int] (c) at (1,-.5) {};
 \node[int] (d) at (0,-1.1) {};
 \node[int] (e) at (-1,-.5) {};
 \node[int] (f) at (-1,.5) {};
 \node[above] at (a) {$\scriptstyle y$};
 \draw (a) edge[-latex] (b);
 \draw (b) edge[-latex] (c);
 \draw (a) edge[-latex] (f);
 \draw (f) edge[-latex] (e);
 \draw (e) edge[-latex] (d);
 \draw (c) edge[latex-] (d);
\end{tikzpicture}
$$
Careful calculation of the sign shows that those two terms are cancelled with terms given from choosing neighbouring edges in $C(\rho)$, and two last terms which does not have corresponding neighbour are indeed $0$ because they have a type cycle.
This concludes the proof that $N(\Gamma,x)=0$.
\end{proof}

\subsection{The proof}
\label{ss:tp}

\begin{prop}
\label{prop:main}
The maps $h:(\MGC_n,\delta)\rightarrow\left(\mO^1\MGC_{n+1},\delta\right)\hookrightarrow\left(\mS^2\MGC_{n+1},\delta\right)$ are quasi-isomorphisms.
\end{prop}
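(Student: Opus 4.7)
The plan is to apply a spectral sequence on the number of vertices to each of the three complexes $\MGC_n$, $\mO^1\MGC_{n+1}$ and $\mS^2\MGC_{n+1}$. Because $\delta_C$ decreases $v$ by one while $\delta_E$ preserves $v$, and the loop number $b=e-v$ is preserved by the total differential, each complex splits as a direct sum over $b$ (Subsection \ref{ss:scacoss}) and the filtration by $v$ on each summand is bounded, so all three spectral sequences converge correctly. The map $h$ preserves the underlying core graph and in particular $v$ and $e$, so it is filtered and induces a map of spectral sequences. It therefore suffices to show that $h$ is a quasi-isomorphism on the zeroth page, where only $\delta_E$ acts. Since $\delta_E$ commutes with the actions of $S_v$ and $S_e$, and preserves the core graph, the zeroth page splits as a direct sum over core graphs $\Phi\in\mM\bar\mV_v\bar\mE_e\grac_n^{\varnothing\nrightarrow}$, and the problem reduces to showing that
\[
h:\left\langle\{\Phi\}\right\rangle\longrightarrow \mO\Phi\hookrightarrow \mS\Phi
\]
is a quasi-isomorphism for every such $\Phi$, where $\mO\Phi$ (resp.\ $\mS\Phi$) is the sub-$\delta_E$-complex of $\mO^1\mM\bar\mV_v\bar\mE_e\GC_{n+1}$ (resp.\ $\mS^2\mM\bar\mV_v\bar\mE_e\GC_{n+1}$) of distinguishable representatives with underlying core graph $\Phi$, and $\left\langle\{\Phi\}\right\rangle$ is the one-dimensional complex spanned by $\Phi$ with all edges of type $\Es$ and zero differential.

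I will realise this reduction through the commutative diagram
\[
\begin{tikzcd}
\left\langle\{\Phi\}\right\rangle \arrow{r}{h} \arrow[swap]{dr}{}
& \mO\Phi \arrow[hookrightarrow]{r}{} \arrow{d}{f}
& \mS\Phi \arrow{d}{g} \\
& \mO\Phi^{v-1} \arrow[hookrightarrow]{r}{} \arrow[swap]{dr}{}
& \mS\Phi^{v-1} \arrow{d}{p} \\
& & A^{e-v+1}
\end{tikzcd}
\]
where $\mO\Phi^{v-1}$, $\mS\Phi^{v-1}$ and $A^{e-v+1}$ are one-dimensional auxiliary complexes extracted from the $\delta_E$-cohomology of the local edge-type modules $\Sigma_{n+1}^{sk,\leq 1}$ and $\Sigma_{n+1}^{sk,\leq 2}$ tabulated in Tables \ref{tbl:sedg}--\ref{tbl:difsknew}, and $f$, $g$, $p$ are the corresponding projections. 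Concretely, $\mO\Phi^{v-1}\subset\mO\Phi$ picks out the piece in which exactly $v-1$ edges carry type $\Ed$ or $\dE$ (forming, by the type-orientation constraint of Definition \ref{defi:SObase}, an oriented spanning tree of $\Phi$) and the remaining $e-v+1$ edges carry type $\Ess$; $\mS\Phi^{v-1}\subset\mS\Phi$ is the obvious analogue; and $A^{e-v+1}$ is the further projection where the $\Ess$-edges are replaced by $\Esss$. Because $\delta_E$ acts edge by edge, a Künneth argument along the lines of the proof of Proposition \ref{prop:SmallSk}, together with the local acyclicity witnessed by the homotopy extending dotted skeleton edges, implies that $f$, $g$ and $p$ are quasi-isomorphisms.

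Granted that $f$, $g$, $p$ are quasi-isomorphisms, the two-out-of-three property applied to the diagram reduces the whole claim to proving that the two diagonal compositions $\left\langle\{\Phi\}\right\rangle\to\mO\Phi^{v-1}$ and $\mO\Phi^{v-1}\to A^{e-v+1}$ are quasi-isomorphisms. Since source and target of each diagonal are one-dimensional, each is a quasi-isomorphism if and only if it is nonzero. I expect this to be the main obstacle. The upper diagonal is the signed sum
\[
\Phi\longmapsto \sum_{x\in V(\Phi)}(v(x)-2)\sum_{\tau\in S(\Phi)}\varepsilon(x,\tau)\cdot(\text{fixed basis vector of }\mO\Phi^{v-1}),
\]
and the crux is showing that the signs $\varepsilon(x,\tau)$, assembled from the pre-factor $(-1)^{rn}$ of \eqref{eq:sgnrinv}, the relabelling sign of \eqref{eq:sgnRelab}, the global factor $(-1)^v(-1)^{(n+1)ev}$ of \eqref{def:h}, and the projection sign from $f$, are independent of $x$ and $\tau$. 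Granting this, the sum equals $\bigl(\sum_x(v(x)-2)\bigr)\,|S(\Phi)| = (2e-2v)\,|S(\Phi)|$ times a fixed nonzero vector by the handshake identity $\sum_x v(x)=2e$, which is nonzero whenever $e>v$; the condition $\Phi\in\grac^{\varnothing\nrightarrow}$ requires at least one vertex of valence $\geq 3$ and all vertices of valence $\geq 2$, forcing $2e\geq 2v+1$ and hence $e>v$, so the boundary case $e=v$ does not arise. The lower diagonal is a similar but simpler signed projection between one-dimensional complexes whose non-vanishing is a shorter direct sign check of the same flavour.
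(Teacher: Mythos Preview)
Your overall architecture matches the paper's: spectral sequence on $v$, reduction to a fixed core graph $\Phi$, and the same commutative diagram shape. But the implementation of the auxiliary complexes and the maps $f,g,p$ has a genuine gap.

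The orientation and sourced constraints in $\mO\Phi$ and $\mS\Phi$ are \emph{global}: whether an edge may carry type $\Ed$ or $\dE$ depends on the types of the other edges (it might close a cycle or kill the last source). Consequently $\mO\Phi$ and $\mS\Phi$ are \emph{not} tensor products of single-edge complexes, and a K\"unneth argument does not apply. Your description of $\mO\Phi^{v-1}$ as the subspace where some $v-1$ edges form ``an oriented spanning tree'' is neither one-dimensional (there are many spanning trees and many orientations) nor a $\delta_E$-subcomplex (since $\delta_E(\Ess)=\Ed-(-1)^{n+1}\dE$ increases the number of $\Ed/\dE$-edges). The paper resolves this by fixing one spanning tree $T=\{a_1,\dots,a_{v-1}\}$ and introducing a new edge type $\ET$ that counts as directed both ways; the map $f$ is then a composite $f^{v-1}\circ\cdots\circ f^1$ where each $f^i$ replaces the single edge $a_i$ by $\ET$, and the proof that each $f^i$ (and likewise $g^i$, $p^j$) is a quasi-isomorphism proceeds by fixing all other edge types and doing a short case analysis on whether $a_i$ closes a cycle (or affects sourcing). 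Only at the very end, in $\mO\Phi^{v-1}$, do the thick tree edges force all remaining edges to be $\Ess$, giving a one-dimensional complex.

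This also affects your non-vanishing argument for the upper diagonal. Because the paper's $f$ sends $\Ess\mapsto 0$ on the fixed tree edges $a_1,\dots,a_{v-1}$, every term $h_{x,\tau}(\Phi)$ with $\tau\neq T$ is killed by $f$, and the surviving sum runs over $x$ only. One then checks that the sign of $f(h_{x,T}(\Phi))$ is independent of $x$ (a delicate but finite computation), yielding $f\circ h(\Phi)=\bigl(\sum_x(v(x)-2)\bigr)\cdot f(h_{v,T}(\Phi))\neq 0$. Your proposed formula keeps the sum over all spanning trees and asserts that $\varepsilon(x,\tau)$ is independent of $\tau$ as well; this is a much stronger claim that you have not argued, and the paper deliberately sidesteps it by collapsing onto a single tree.
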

\begin{proof}
We prove simultaneously that both maps are quasi-isomorphisms. On the mapping cone of them we set up the spectral sequence on the number of vertices. Standard splitting of complexes as the product of complexes with fixed loop number implies the correct convergence.

The edge differential does not change the number of vertices, while the core differential does. Therefore, on the first page of the spectral sequences there are mapping cones of the maps
$$
h:(\MGC_n,0)
\rightarrow
\left(\mO^1\MGC_{n+1},\pm\delta_E\right)
\hookrightarrow
\left(\mS^2\MGC_{n+1},\pm\delta_E\right).
$$

Since the edge differential does not change the number of vertices and edges, the homology commutes with permuting them. To show that this mapping cone is acyclic, it is enough to show the same for distinguishable vertices and edges. Also, complexes are direct sums of those with fixed number of vertices and edges, so it is enough to show that
$$
h:(\mM\bar\mV_v\bar\mE_e\GC_n,0)
\rightarrow
\left(\mO^1\mM\bar\mV_v\bar\mE_e\GC_{n+1},\pm\delta_E\right)
\hookrightarrow
\left(\mS^2\mM\bar\mV_v\bar\mE_e\GC_{n+1},\pm\delta_E\right)
$$
are quasi-isomorphisms for all $v$ and $e$.

Recall that all involved complexes are sub-complexes of
$\mM\bar\mV_v\bar\mE_e^\Sigma\Grac_n^{\varnothing\rightarrow}=
\mM\bar\mV_v\bar\mE_e\Grac_n^{\varnothing\nrightarrow}
\otimes_{S_2^{\times e}}\Sigma^{\otimes e}$
for different space of edge types $\Sigma$. Using Proposition \ref{prop:SObase2} we can write them as
$$
\mM\bar\mV_v\bar\mE_e\GC_n=
\left\langle\mM\bar\mV\bar\mE\grac^{\varnothing\nrightarrow}\times\left(\sigma_n^{0,0}\right)^{\times e}\right\rangle_{S_2^{\times e}},
$$
$$
\mO^1\mM\bar\mV_v\bar\mE_e\GC_{n+1}=
\left\langle\mM\bar\mV\bar\mE\grac^{\varnothing\nrightarrow}\times_O\left(\sigma_n^{1,1}\right)^{\times e}\right\rangle_{S_2^{\times e}},
$$
$$
\mS^2\mM\bar\mV_v\bar\mE_e\GC_{n+1}=
\left\langle\mM\bar\mV\bar\mE\grac^{\varnothing\nrightarrow}\times_S\left(\sigma_n^{2,2}\right)^{\times e}\right\rangle_{S_2^{\times e}}.
$$

The homology of the edge differential commutes with the action of $S_2^{\times e}$ so it is enough to show that
$$
h:\left(\left\langle\mM\bar\mV\bar\mE\grac^{\varnothing\nrightarrow}\times\left(\sigma_n^{0,0}\right)^{\times e}\right\rangle,0\right)
\rightarrow
\left(\left\langle\mM\bar\mV\bar\mE\grac^{\varnothing\nrightarrow}\times_O\left(\sigma_{n+1}^{1,1}\right)^{\times e}\right\rangle,\pm\delta_E\right)
\hookrightarrow
\left(\left\langle\mM\bar\mV\bar\mE\grac^{\varnothing\nrightarrow}\times_S\left(\sigma_{n+1}^{2,2}\right)^{\times e}\right\rangle,\pm\delta_E\right)
$$
are quasi-isomorphisms.

Neither differential $\delta_E$ nor map $h$ changes core graph, so the mapping cones of above maps split as a direct sum of mapping cones for fixed core graph. Let us fix a core graph $\Phi\in\mM\bar\mV\bar\mE\grac^{\varnothing\nrightarrow}$ with $v$ vertices and $e$ edges. Recall, for no reason, that $\Phi$ is $j$-oriented $k$-sourced $\ell$-directed core graph with chosen directions in all colours.

It is now enough to show that
\begin{equation}
\label{eq:hreduced}
h:\left(\left\langle\{\Phi\}\times\left(\sigma_n^{0,0}\right)^{\times e}\right\rangle,0\right)
\rightarrow
\left(\left\langle\{\Phi\}\times_O\left(\sigma_{n+1}^{1,1}\right)^{\times e}\right\rangle,\pm\delta_E\right)
\hookrightarrow
\left(\left\langle\{\Phi\}\times_S\left(\sigma_{n+1}^{2,2}\right)^{\times e}\right\rangle,\pm\delta_E\right)
\end{equation}
are quasi-isomorphisms. Here $\{\Phi\}\times\sigma$, $\{\Phi\}\times_O\sigma$ and $\{\Phi\}\times_S\sigma$ are subsets of $\bar\mV\bar\mE\grac^{\varnothing\nrightarrow}\times\sigma$, respectively $\bar\mV\bar\mE\grac^{\varnothing\nrightarrow}\times_O\sigma$, respectively $\bar\mV\bar\mE\grac^{\varnothing\nrightarrow}\times_S\sigma$, with the first term $\Phi$. Let us abbreviate:
\begin{equation}
\{\Phi\}\cong\{\Phi\}\times\left(\sigma_n^{0,0}\right)^{\times e},\qquad
\mO\Phi:=\left\langle\{\Phi\}\times_O\left(\sigma_{n+1}^{1,1}\right)^{\times e}\right\rangle,\qquad
\mS\Phi:=\left\langle\{\Phi\}\times_S\left(\sigma_{n+1}^{2,2}\right)^{\times e}\right\rangle.
\end{equation}

To prove \eqref{eq:hreduced} we introduce another complexes and maps as in Figure \ref{fig:cd}, and show that all mentioned maps are quasi-isomorphisms.

\begin{figure}[H]
$$
\begin{tikzcd}
\left\langle\{\Phi\}\right\rangle
\arrow{r}{h}
\arrow[swap]{dr}{f\circ h}
& \mO\Phi
\arrow[hookrightarrow]{r}{\iota_0}
\arrow{d}{f}
& \mS\Phi
\arrow{d}{g} \\
& \mO\Phi^{v-1}
\arrow[hookrightarrow]{r}{\iota_{v-1}}
\arrow[swap]{dr}{p\circ\iota_{v-1}}
& \mS\Phi^{v-1}
\arrow{d}{p} \\
& & A^{e-v+1}
\end{tikzcd}
$$
\caption{\label{fig:cd}
A commutative diagram of complexes. In what follows we introduce complexes $\mO\Phi^i$, $\mS\Phi^i$ and $A^j$, and prove that all vertical and diagonal maps in the diagram are quasi isomorphism, implying that all horizontal maps are quasi-isomorphisms too, finishing the proof.}
\end{figure}

In $\Phi$ we choose $v-1$ edges, say $a_1,\dots,a_{v-1}$, such that for every $i=1,\dots,v-1$ the edges $\{a_1,\dots, a_i\}$ form a sub-graph of $\Phi$ that is a tree and contains the last vertex $v$. Clearly, $\{a_1,\dots, a_{v-1}\}$ forms a spanning tree.
For every $i=0,\dots,v-1$ we form two graph complexes $\mO\Phi^i$ and $\mS\Phi^i$ as follows.
\begin{itemize}
\item Let $\sigma_O=\sigma_{n+1}^{1,1}\cup\{\ET\}$. The structure of $\sigma_{n+1}^{1,1}$ is as usual, $\ET$ is of degree $-n$ (the same as $\Ed$ and $\dE$), $\delta_E(\ET)=0$ and non-trivial $\chi\in S_2$ acts as $\chi\ET=(-1)^{n+1}\ET$.
Edges in $\Phi$ have types from $\sigma_O$ such that $a_1,\dots, a_i$ have type $\ET$, and other edges have types in $\sigma_n^{1,1}$.

A sequence of vertices $x_0,x_1,\dots, x_p=x_0$ in $\Phi$ is a \emph{cycle} in $(\Phi,\alpha_1,\dots,\alpha_e)$ if for every $j=0,\dots,p-1$ there exists an edge $a$ that is
\begin{itemize}
\item of type $\Ed$ or $\ET$ and $\Phi_-(a)=x_j$ and $\Phi_+(a)=x_{j+1}$ or
\item of type $\dE$ or $\ET$ and $\Phi_-(a)=x_{j+1}$ and $\Phi_+(a^)=x_j$.
\end{itemize}
Roughly speaking, thick edges $\ET$ are considered to go in both directions for the matter of cycles.
$(\Phi,\alpha_1,\dots,\alpha_e)$ is \emph{oriented} if there is no cycle in it. $(\mO\Phi^i,\delta_E)$ is the complex spanned by all oriented $(\Phi,\alpha_1,\dots,\alpha_e)$.
An example of an element of $(\mO\Phi^i,\delta_E)$ is shown in Figure \ref{fig:complexSigma}.

\item Let $\sigma_S=\sigma_{n+1}^{2,2}\cup\{\ET\}$.
The structure of $\sigma_{n+1}^{2,2}$ is as usual, and $\ET$ is as above.
Edges in $\Phi$ have types from $\sigma_S$ such that $a_1,\dots, a_i$ have type $\ET$, and other edges have types in $\sigma_n^{2,2}$.

$(\Phi,\alpha_1,\dots,\alpha_e)$ is \emph{sourced} if
\begin{itemize}
\item at least one $\alpha_i$ is of type $\Esss$, or
\item at least one $\alpha_i$ is of type $\dEd$, or
\item there is at least one vertex $x$ such that there is no edge $a$ that is
\begin{itemize}
\item of type $\Ed$ or $\ET$ and $\Phi_+(a)=x$ or
\item of type $\dE$ or $\ET$ and $\Phi_-(a)=x$, or
\end{itemize}
\item for every vertex $x$ for which there exist an edge $b$ of type $\ET$ such that $\Phi_+(b)=x$ or $\Phi_-(b)=x$ there is no edge $a$ that is
\begin{itemize}
\item of type $\Ed$ and $\Phi_+(a)=x$ or
\item of type $\dE$ and $\Phi_-(a)=x$.
\end{itemize}
\end{itemize}
Roughly speaking, the whole tree of vertices connected by thick edges is considered as one vertex for the matter of being source.
$(\mS\Phi^i,\delta_E)$ is the complex spanned by of all sourced $(\Phi,\alpha_1,\dots,\alpha_e)$.
\end{itemize}

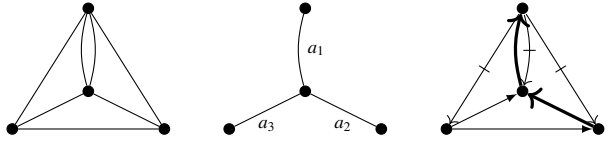
\begin{figure}[H]
$$
\begin{tikzpicture}[baseline=1ex]
 \node[int] (a) at (-1,-.5) {};
 \node[int] (b) at (1,-.5) {};
 \node[int] (c) at (0,0) {};
 \node[int] (d) at (0,1.1) {};
 \draw (a) edge (b);
 \draw (a) edge (c);
 \draw (a) edge (d);
 \draw (b) edge (c);
 \draw (b) edge (d);
 \draw (c) edge[bend left=15] (d);
 \draw (c) edge[bend right=15] (d);
\end{tikzpicture}\quad\quad
\begin{tikzpicture}[baseline=1ex]
 \node[int] (a) at (-1,-.5) {};
 \node[int] (b) at (1,-.5) {};
 \node[int] (c) at (0,0) {};
 \node[int] (d) at (0,1.1) {};
 \draw (a) edge node[below] {$\scriptstyle a_3$} (c);
 \draw (b) edge node[below] {$\scriptstyle a_2$} (c);
 \draw (c) edge[bend left=15] node[right] {$\scriptstyle a_1$} (d);
\end{tikzpicture}\quad\quad
\begin{tikzpicture}[baseline=1ex]
 \node[int] (a) at (-1,-.5) {};
 \node[int] (b) at (1,-.5) {};
 \node[int] (c) at (0,0) {};
 \node[int] (d) at (0,1.1) {};
 \draw (a) edge[-latex] (b);
 \draw (a) edge[-latex] (c);
 \draw (a) edge[crossed,<-] (d);
 \draw (b) edge[very thick,->] (c);
 \draw (b) edge[crossed,<-] (d);
 \draw (c) edge[very thick,->,bend left=15] (d);
 \draw (c) edge[bend right=15,crossed,<-] (d);
\end{tikzpicture}
$$
\caption{\label{fig:complexSigma}
For a core graph $\Phi$ on the left with chosen edges in the middle, an example of a graph in $\mO\Phi^2$ is drawn on the right.}
\end{figure}

It is clear that
\begin{equation}
\left(\mO\Phi^0,\delta_E\right)=
\left(\mO\Phi,\delta_E\right),
\qquad
\left(\mS\Phi^0,\delta_E\right)=
\left(\mS\Phi,\delta_E\right).
\end{equation}
There is a natural inclusion of complexes $\iota_i:\mO\Phi^i\hookrightarrow\mS\Phi^i$ for every $i=0,\dots,v-1$. It extends the map defined on the current bases as follows.
\begin{equation}
\Ess\mapsto \tfrac{1}{2}(\EdE-\dEd),\quad
\Ed\mapsto\Ed,\quad
\dE\mapsto\dE.
\end{equation}

\begin{lemma}
\label{lem:QIf}
For every $i=1,\dots,v-1$ there is a quasi-isomorphism $f^i:\mO\Phi^{i-1}\rightarrow\mO\Phi^i$.
\end{lemma}
\begin{proof}
The essential difference between $\mO\Phi^{i-1}$ and $\mO\Phi^i$ is in the edge $a_i$, it has to be of type $\ET$ in $\mO\Phi^i$, and it is of another type in $\mO\Phi^{i-1}$. Let $f^i:\mO\Phi^{i-1}\rightarrow\mO\Phi^i$ not change other edges, and let $a_i$ be mapped as follows.
\begin{equation}
\label{eq:fi}
\Ess\mapsto 0,\quad
\Ed\mapsto\ET,\quad
\dE\mapsto(-1)^{n+1}\ET.
\end{equation}

The map splits as a direct sum of maps between complexes with fixed types of other edges $f^i:\mO\mE^{fix}\Phi^{i-1}\rightarrow\mO\mE^{fix}\Phi^i$, where superscript $fix$ replaces particular choice of those types. So it is enough to show that  $f^i:\mO\mE^{fix}\Phi^{i-1}\rightarrow\mO\mE^{fix}\Phi^i$ is a quasi-isomorphism. Here, depending on other edge types, condition of being oriented can disallow some possibilities for edge $a_i$ both in $\mO\mE^{fix}\Phi^{i-1}$ and $\mO\mE^{fix}\Phi^i$. We list all cases, showing that the map is quasi-isomorphism in all of them.
\begin{enumerate}
\item There is a cycle that does not include edge $a_i$. In that case both $\mO\mE^{fix}\Phi^{i-1}=\mO\mE^{fix}\Phi^i=0$, so the statement is clear.

\item It is not the case from (1) and there is a path from $\Phi_-(a_i)$ to $\Phi_+(a_i)$ along fixed edge types. Precisely, there is a sequence of vertices $x_0=\Phi_-(a_i),x_1,\dots, x_p=\Phi_+(a_i)$ such that for every $j=0,\dots,p-1$ there exists an edge $a\neq a_i$ that is
\begin{itemize}
\item of type $\Ed$ or $\ET$ and $\Phi_-(a)=x_i$ and $\Phi_+(a)=x_{i+1}$ or
\item of type $\dE$ or $\ET$ and $\Phi_-(a)=x_{i+1}$ and $\Phi_+(a)=x_i$.
\end{itemize}
In that case $\mO\mE^{fix}\Phi^i=0$ because the thick edge at $a_i$ closes the cycle.
In $\mO\mE^{fix}\Phi^{i-1}$ the edge type $\dE$ of $a_i$ is not allowed since it closes the cycle, but $\Ed$ and $\Ess$ are allowed. The differential in $\mO\mE^{fix}\Phi^{i-1}$ is therefore only $\Ess\mapsto\Ed$, making the complex acyclic, so $f^i:\mO\mE^{fix}\Phi^{i-1}\rightarrow 0$ must be a quasi-isomorphism.

\item The same as (2) with $\Phi_-(a_i)$ and $\Phi_+(a_i)$ exchanged gives the analogous result.

\item Neither of the above. In that case all types of edges are allowed in both complexes. It can be easily checked that the map defined with \eqref{eq:fi} is a quasi-isomorphism.
\end{enumerate}
\end{proof}

\begin{lemma}
\label{lem:QIg}
For every $i=1,\dots,v-1$ there is a quasi-isomorphism $g^i:\mS\Phi^{i-1}\rightarrow\mS\Phi^i$.
\end{lemma}
\begin{proof}
Similarly as in the previous lemma, let $g^i:\mS\Phi^{i-1}\rightarrow\mS\Phi^i$ not change edges other than $a_i$, and let $a_i$ be mapped as follows.
\begin{equation}
\label{eq:gi}
\Esss,\dEd,\EdE\mapsto 0,\quad
\Ed\mapsto\ET,\quad
\dE\mapsto(-1)^{n+1}\ET.
\end{equation}

Again, the map splits as a direct sum of maps between complexes with fixed types of other edges $g^i:\mS\mE^{fix}\Phi^{i-1}\rightarrow\mS\mE^{fix}\Phi^i$, where superscript $fix$ replaces particular choice of those types. It is enough to show that  $g^i:\mS\mE^{fix}\Phi^{i-1}\rightarrow\mS\mE^{fix}\Phi^i$ is a quasi-isomorphism. Here, depending on other edge types, condition of being sourced can disallow some possibilities for edge $a_i$ both in $\mS\mE^{fix}\Phi^{i-1}$ and $\mS\mE^{fix}\Phi^i$. We list all cases, showing that the map is quasi-isomorphism in all of them.
\begin{enumerate}
\item There is a source independent on the edge $a_i$. Precisely,  there is a fixed edge of type \Esss or \dEd, or there is a vertex $x\neq\Phi_-(a_i),\Phi_+(a_i)$ such that there is no edge $a$ that is
\begin{itemize}
\item of type $\Ed$ or $\ET$ and $\Phi_+(a)=x$ or
\item of type $\dE$ or $\ET$ and $\Phi_-(a)=x$.
\end{itemize}
In that case all types of edges are allowed in both complexes. It can be easily checked that the map defined with \eqref{eq:gi} is a quasi-isomorphism.

\item It is not the case from (1) and both ends of $a_i$ are sources if $a_i$ is ignored, either as single vertices or as part of the tree of vertices connected by thick edges. Precisely, for $x\in\{\Phi_-(a_i),\Phi_+(a_i)\}$ there is no edge $a\neq a_i$ that is
\begin{itemize}
\item of type $\Ed$ or $\ET$ and $\Phi_+(a)=x$ or
\item of type $\dE$ or $\ET$ and $\Phi_-(a)=x$, or
\end{itemize}
there is an edge $b$ of type $\ET$ such that $\Phi_+(b)=x$ or $\Phi_-(b)=x$ and
for every vertex $y$ for which there is an edge $b$ of type $\ET$ such that $\Phi_+(b)=y$ or $\Phi_-(b)=y$ there is no edge $a$ that is
\begin{itemize}
\item of type $\Ed$ and $\Phi_+(a)=y$ or
\item of type $\dE$ and $\Phi_-(a)=y$.
\end{itemize}
In that case all types of edges are again allowed in both complexes, so the result is the same as in (1).

\item It is not the case from (1) or (2), but $x=\Phi_-(a_i)$ is a source if $a_i$ is ignored, i.e.\ fulfils the condition from (2).
In that case $\Esss$, $\dEd$, $\EdE$ and $\Ed$ are allowed in $\mS\mE^{fix}\Phi^{i-1}$, but $\dE$ is not allowed, making the complex $\mS\mE^{fix}\Phi^{i-1}$ acyclic. It is $\mS\mE^{fix}\Phi^i=0$, so $g^i:\mS\mE^{fix}\Phi^{i-1}\rightarrow 0$ is a quasi-isomorphism.

\item The same as (3) but $x=\Phi_+(a_i)$ gives the analogous result.

\item Neither of the above. In that case in $\mS\mE^{fix}\Phi^{i-1}$ $\Esss$ and $\dEd$
are allowed, while $\EdE$, $\Ed$ and $\dE$ are not, making $\mS\mE^{fix}\Phi^{i-1}$ acyclic. Again $\mS\mE^{fix}\Phi^i=0$, implying the same result.
\end{enumerate}
\end{proof}

The lemmas imply that
\begin{equation}
f:=f^{v-1}\circ\dots\circ f^1:
\mO\Phi
%=\left\langle\{\Phi\}\times_O\left(\sigma_{n+1}^{1,1}\right)^{\times e}\right\rangle
\rightarrow\mO\Phi^{v-1}
\end{equation}
and
\begin{equation}
g:=g^{v-1}\circ\dots\circ g^1:
\mS\Phi
%=\left\langle\{\Phi\}\times_S\left(\sigma_{n+1}^{2,2}\right)^{\times e}\right\rangle
\rightarrow\mS\Phi^{v-1}
\end{equation}
are quasi-isomorphisms.
One easily checks that above maps commute with inclusions, i.e.\
\begin{equation}
\iota_{v-1}\circ f=g\circ\iota_0:\mO\Phi\rightarrow\mS\Phi^{v-1}.
\end{equation}

Let us recall the structure of $\mS\Phi^{v-1}$. It is spanned by base graph representatives with core graph $\Phi$, edges $a_1,\dots a_{v-1}$ have type $\ET$ and
\begin{itemize}
\item at least one edge is of type $\Esss$, or
\item at least one edge is of type $\dEd$, or
\item all edges $\notin\{a_q,\dots,a_{v-1}\}$ are of type $\EdE$.
This is because in $\mS\Phi^{v-1}$ thick edges connect all vertices in a spanning tree, and every edge of type $\Ed$ or $\dE$ will make the tree not source.
\end{itemize}

Note that the shape of the graph $\Phi$ does not matter at all, $\mS\Phi^{v-1}$ can be seen as spanned by the subset of $\left(\sigma_{n+1}^{2,2}\right)^{\times (e-v+1)}$ whose elements fulfil the upper condition.

Now we introduce the sequence of complexes $A^j$ spanned by elements of $\left(\sigma_{n+1}^{1,1}\right)^{\times j}\times\left(\sigma_{n+1}^{2,2}\right)^{\times (e-v+1-j)}$ which satisfy the following condition:
\begin{itemize}
\item at least one term in $\left(\sigma_{n+1}^{2,2}\right)^{\times (e-v+1-j)}$ is $\Esss$, or
\item at least one term in $\left(\sigma_{n+1}^{2,2}\right)^{\times (e-v+1-j)}$ is $\dEd$, or
\item all terms in $\left(\sigma_{n+1}^{2,2}\right)^{\times (e-v+1-j)}$ are $\EdE$
and all terms in $\left(\sigma_{n+1}^{1,1}\right)^{\times j}$ are $\Ess$, i.e.\ it is the element \break $(\Ess,\dots,\Ess,\EdE,\dots,\EdE)$.
\end{itemize}
Clearly, $A^0=\mO\Phi^{v-1}$.

\begin{lemma}
\label{lem:QIp}
For every $j=1,\dots,e-v+1$ there is a quasi-isomorphism $p^j:A^{j-1}\rightarrow A^j$.
\end{lemma}
\begin{proof}
There is a natural projection $\Sigma_{n+1}^2=\left\langle\sigma_{n+1}^{2,2}\right\rangle\rightarrow\Sigma_{n+1}^1=\left\langle\sigma_{n+1}^{1,1}\right\rangle$ that acts
\begin{equation}
\label{eq:projS21}
\Esss\mapsto 0,\quad
\dEd\mapsto -\Ess,\quad
\EdE\mapsto \Ess,\quad
\Ed\mapsto\Ed,\quad
\dE\mapsto\dE.
\end{equation}
We construct a map $p^j:A^{j-1}\rightarrow A^j$ that do not change terms in the first $\left(\sigma_{n+1}^{1,1}\right)^{\times (j-1)}$ nor in the last $\left(\sigma_{n+1}^{2,2}\right)^{\times (e-v+1-j)}$, and on the $j$-th term acts like in \eqref{eq:projS21}.
The map splits as a direct sum of maps between complexes with fixed terms other that $j$-th.
Depending on the choice of other terms, the condition can disallow some possibilities for $j$-th term.
We list all cases, showing that the map is quasi-isomorphism in all of them.
\begin{enumerate}
\item There is a fixed term $\Esss$ or $\dEd$. Than all arrows are allowed in the j-th term, and it is easy to check that the projection \eqref{eq:projS21} is a quasi-isomorphism.
\item It is not the case of (1) and there is a fixed term $\Ed$ or $\dE$. In that case both complexes are $0$ and $0\rightarrow 0$ is trivially a quasi-isomorphism.
\item Neither of the above, i.e.\ all fixed terms are $\EdE$ and $\Ess$. Then the map on $j$-th term must be $\langle\EdE\rangle\rightarrow\langle\Ess\rangle$, being a quasi-isomorphism.
\end{enumerate}
\end{proof}

The lemma implies that
\begin{equation}
p:=p^{e-v+1}\circ\dots\circ p^1:\mS\Phi^{v-1}\rightarrow A^{e-v+1}
\end{equation}
is a quasi-isomorphism.

With this we have defined all complexes on Figure \ref{fig:cd} and proven that all vertical maps are quasi-isomorphisms. To finish the proof of the proposition we will prove that diagonal maps are quasi-isomorphisms too. Check that all three complexes on the diagonal are one-dimensional, and hence there homologies are also one-dimensional.
We need to prove that a representative of the class, that is any non-zero element, is sent to a representative of the class, that is any non-zero element. I.e.\ we just need to prove that diagonal maps are not zero maps.

\begin{lemma}
\label{lem:QIfh}
The map $f\circ h:\left\langle\{\Phi\}\right\rangle\rightarrow\mO\Phi^{v-1}$ is a quasi-isomorphism.
\end{lemma}
\begin{proof}
Both complexes are 1-dimensional, so we need to prove that $f\circ h\neq 0$.
The left-hand side complex has a generator $\Phi$. It is
$$
f\circ h(\Phi)=
f\left(\sum_{x\in V(\Phi)}(v(x)-2)\sum_{\tau\in S(\Phi)}h_{x,\tau}(\Phi)\right).
$$
The map $h_{x,\tau}$ gives edges in $E(\tau)$ type $\Ed$ or $\dE$, and to the other edges type $\Ess$. After that, the map $f=f_1\circ\dots\circ f_{v-1}$ kills all graphs if any of edges $e_1,\dots,a_{v-1}$ has a type $\Ess$. Therefore, $f\circ h_{x,\tau}$ is non-zero only if the tree $\tau$ consist exactly of edges $e_1,\dots,a_{v-1}$. Let us call this tree $T$. So
\begin{equation}
f\circ h(\Phi)=
\sum_{x\in V(\Phi)}(v(x)-2)f\left(h_{x,T}(\Phi)\right).
\end{equation}
From the construction it is clear that $f\left(h_{x,T}(\Phi)\right)$ is $\pm$ the generator of $\mO\Phi^{v-1}$, the graph that has $a_1,\dots,a_{v-1}$ of type $\ET$ and other edges of type $\Ess$. Factor $(v(x)-2)$ is always non-negative and at least once positive, so it is enough to show that the sign $\pm$ in front of the generator is always the same, independent on the choice of vertex $x$. Indeed, we will show that $f\left(h_{x,T}(\Phi)\right)=f\left(h_{v,T}(\Phi)\right)$ for every $x\in V(\Gamma)$.

We can assume that $\Phi$ has a model labelling.
Let there be $r$ edges in the tree $T$ from $v$ to $x$. The graphs $h_{x,T}(\Phi)$ and $h_{v,T}(\Phi)\in\{\Phi\}\times_O\left(\sigma_{n+1}^{1,1}\right)^{\times e}$ are similar, the only differences are:
\begin{enumerate}
\item $r$ edges of type $\Ed$ or $\dE$ between $x$ and $v$ have the opposite directions,
\item there is a sign difference $(-1)^{rn}$ from \eqref{eq:sgnrinv} because $r$ edges in the tree get different directions,
\item the sign difference from getting the graph into model labelling before $h_{x,T}(\Phi)$ is $(-1)^n$ because we only need to switch two vertices, and
\item vertices and edges on the path from $x$ to $v$ along $T$ are labelled differently, giving different relabelling sign from \eqref{eq:sgnRelab}.
\end{enumerate}
After the action of $f$ both graphs are sent to $\pm$ the same graph, while the difference (1) gives a sign difference $(-1)^{r(n+1)}$. So, sign difference from (1) and (2) is always $(-1)^r$, along with the sign from (3) and the relabelling sign difference from (4).
Note that signs from \eqref{def:h} are the same in both cases.
To get the relabelling sign difference it is enough to calculate the sign of relabelling that turns $h'_{x,T}(\Phi)$ to $h'_{v,T}(\Phi)$. We do it separately for even and odd parameter $n$ on Figures \ref{fig:signheven} and \ref{fig:signhodd}.

\begin{figure}[H]
$$
\begin{tikzpicture}[baseline=-3ex]
 \node[int] (a) at (0,0) {};
 \node[int] (b) at (1,0) {};
 \node[int] (c) at (1.5,-.85) {};
 \node at (2,-.85) {$\dots$};
 \node[int] (d) at (2.5,-.85) {};
 \node[int] (e) at (3.5,-.85) {};
 \draw (a) edge node[below] {$e_1$} (b);
 \draw (b) edge node[right] {$e_2$} (c);
 \draw (d) edge node[below] {$e_r$} (e);
 \draw (a) edge (-.5,.85) {};
 \draw (a) edge (-.5,-.85) {};
 \draw (b) edge (1.5,.85) {};
 \draw (c) edge (1,-1.7) {};
 \draw (e) edge (4,0) {};
 \draw (e) edge (4,-1.7) {};
 \node[left] at (a) {$v$};
 \node[right] at (e) {$x$};
\end{tikzpicture}
\qquad\mxto{{h'_{v,T}}}\qquad
\begin{tikzpicture}[baseline=-3ex]
 \node[int] (a) at (0,0) {};
 \node[int] (b) at (1,0) {};
 \node[int] (c) at (1.5,-.85) {};
 \node at (2,-.85) {$\dots$};
 \node[int] (d) at (2.5,-.85) {};
 \node[int] (e) at (3.5,-.85) {};
 \draw (a) edge[-latex] (b);
 \draw (b) edge[-latex] (c);
 \draw (d) edge[-latex] (e);
 \draw (a) edge[-latex] (-.5,.85) {};
 \draw (a) edge[-latex] (-.5,-.85) {};
 \draw (b) edge[-latex] (1.5,.85) {};
 \draw (c) edge[-latex] (1,-1.7) {};
 \draw (e) edge[-latex] (4,0) {};
 \draw (e) edge[-latex] (4,-1.7) {};
 \node[left] at (a) {$v$};
 \node[right] at (b) {$e_1$};
 \node[left] at (c) {$e_2$};
 \node[above] at (d) {$e_{r-1}$};
 \node[right] at (e) {$e_r$};
\end{tikzpicture}
$$
$$
\begin{tikzpicture}[baseline=-3ex]
 \node[int] (a) at (0,0) {};
 \node[int] (b) at (1,0) {};
 \node[int] (c) at (1.5,-.85) {};
 \node at (2,-.85) {$\dots$};
 \node[int] (d) at (2.5,-.85) {};
 \node[int] (e) at (3.5,-.85) {};
 \draw (a) edge node[below] {$e_1$} (b);
 \draw (b) edge node[right] {$e_2$} (c);
 \draw (d) edge node[below] {$e_r$} (e);
 \draw (a) edge (-.5,.85) {};
 \draw (a) edge (-.5,-.85) {};
 \draw (b) edge (1.5,.85) {};
 \draw (c) edge (1,-1.7) {};
 \draw (e) edge (4,0) {};
 \draw (e) edge (4,-1.7) {};
 \node[left] at (a) {$v$};
 \node[right] at (e) {$x$};
\end{tikzpicture}
\qquad\mxto{h'_{x,T}}\qquad
\begin{tikzpicture}[baseline=-3ex]
 \node[int] (a) at (0,0) {};
 \node[int] (b) at (1,0) {};
 \node[int] (c) at (1.5,-.85) {};
 \node at (2,-.85) {$\dots$};
 \node[int] (d) at (2.5,-.85) {};
 \node[int] (e) at (3.5,-.85) {};
 \draw (a) edge[latex-] (b);
 \draw (b) edge[latex-] (c);
 \draw (d) edge[latex-] (e);
 \draw (a) edge[-latex] (-.5,.85) {};
 \draw (a) edge[-latex] (-.5,-.85) {};
 \draw (b) edge[-latex] (1.5,.85) {};
 \draw (c) edge[-latex] (1,-1.7) {};
 \draw (e) edge[-latex] (4,0) {};
 \draw (e) edge[-latex] (4,-1.7) {};
 \node[left] at (a) {$e_1$};
 \node[right] at (b) {$e_2$};
 \node[left] at (c) {$e_3$};
 \node[above] at (d) {$e_r$};
 \node[right] at (e) {$v$};
\end{tikzpicture}
$$
\caption{\label{fig:signheven}
The action of $h_{v,T}$ and $h_{x,T}$ for even $n$, with $r$ edges $e_1,\dots,e_r$ between $v$ and $x$ along $T$.
Relabelling vertices from the label of $h'_{x,T}(\Phi)$ to the label of $h'_{v,T}(\Phi)$ gives the sign $(-1)^r$ because vertices are odd after the action.
Edges are even, so we do not need to consider them.}
\end{figure}
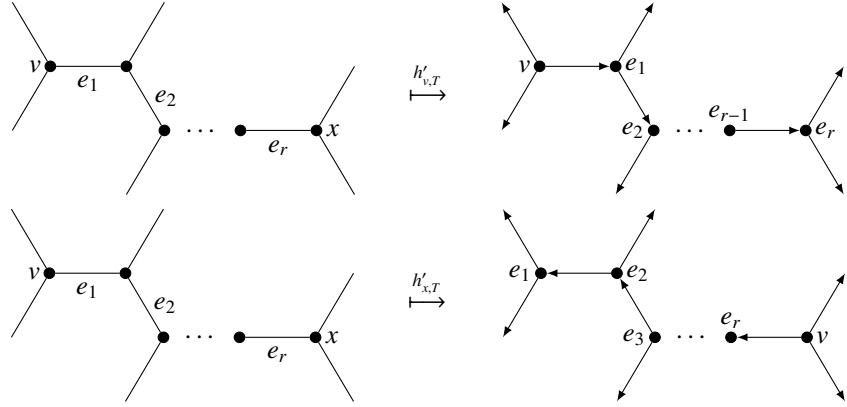

\begin{figure}[H]
$$
\begin{tikzpicture}[baseline=-3ex]
 \node[int] (a) at (0,0) {};
 \node[int] (b) at (1,0) {};
 \node[int] (c) at (1.5,-.85) {};
 \node at (2,-.85) {$\dots$};
 \node[int] (d) at (2.5,-.85) {};
 \node[int] (e) at (3.5,-.85) {};
 \node[right] at (b) {$v_1$};
 \node[left] at (c) {$v_2$};
 \node[above] at (d) {$v_r$};
 \draw (a) edge (b);
 \draw (b) edge (c);
 \draw (d) edge (e);
 \draw (a) edge (-.5,.85) {};
 \draw (a) edge (-.5,-.85) {};
 \draw (b) edge (1.5,.85) {};
 \draw (c) edge (1,-1.7) {};
 \draw (e) edge (4,0) {};
 \draw (e) edge (4,-1.7) {};
 \node[left] at (a) {$v$};
 \node[right] at (e) {$x$};
\end{tikzpicture}
\qquad\mxto{{h'_{v,T}}}\qquad
\begin{tikzpicture}[baseline=-3ex]
 \node[int] (a) at (0,0) {};
 \node[int] (b) at (1,0) {};
 \node[int] (c) at (1.5,-.85) {};
 \node at (2,-.85) {$\dots$};
 \node[int] (d) at (2.5,-.85) {};
 \node[int] (e) at (3.5,-.85) {};
 \draw (a) edge[-latex] node[below] {$v_1$} (b);
 \draw (b) edge[-latex] node[right] {$v_2$} (c);
 \draw (d) edge[-latex] node[below] {$x$} (e);
 \draw (a) edge[-latex] (-.5,.85) {};
 \draw (a) edge[-latex] (-.5,-.85) {};
 \draw (b) edge[-latex] (1.5,.85) {};
 \draw (c) edge[-latex] (1,-1.7) {};
 \draw (e) edge[-latex] (4,0) {};
 \draw (e) edge[-latex] (4,-1.7) {};
 %\node[left] at (a) {$v$};
 %\node[right] at (e) {$e_r$};
\end{tikzpicture}
$$
$$
\begin{tikzpicture}[baseline=-3ex]
 \node[int] (a) at (0,0) {};
 \node[int] (b) at (1,0) {};
 \node[int] (c) at (1.5,-.85) {};
 \node at (2,-.85) {$\dots$};
 \node[int] (d) at (2.5,-.85) {};
 \node[int] (e) at (3.5,-.85) {};
 \node[right] at (b) {$v_1$};
 \node[left] at (c) {$v_2$};
 \node[above] at (d) {$v_{r-1}$};
 \draw (a) edge (b);
 \draw (b) edge (c);
 \draw (d) edge (e);
 \draw (a) edge (-.5,.85) {};
 \draw (a) edge (-.5,-.85) {};
 \draw (b) edge (1.5,.85) {};
 \draw (c) edge (1,-1.7) {};
 \draw (e) edge (4,0) {};
 \draw (e) edge (4,-1.7) {};
 \node[left] at (a) {$v$};
 \node[right] at (e) {$x$};
\end{tikzpicture}
\qquad\mxto{h'_{x,T}}\qquad(-1)^{r+1}
\begin{tikzpicture}[baseline=-3ex]
 \node[int] (a) at (0,0) {};
 \node[int] (b) at (1,0) {};
 \node[int] (c) at (1.5,-.85) {};
 \node at (2,-.85) {$\dots$};
 \node[int] (d) at (2.5,-.85) {};
 \node[int] (e) at (3.5,-.85) {};
 \draw (a) edge[latex-] node[below] {$x$} (b);
 \draw (b) edge[latex-] node[right] {$v_1$} (c);
 \draw (d) edge[latex-] node[below] {$v_{r-1}$} (e);
 \draw (a) edge[-latex] (-.5,.85) {};
 \draw (a) edge[-latex] (-.5,-.85) {};
 \draw (b) edge[-latex] (1.5,.85) {};
 \draw (c) edge[-latex] (1,-1.7) {};
 \draw (e) edge[-latex] (4,0) {};
 \draw (e) edge[-latex] (4,-1.7) {};
 %\node[left] at (a) {$v$};
 %\node[right] at (e) {$v$};
\end{tikzpicture}
$$
\caption{\label{fig:signhodd}
The action of $h_{v,T}$ and $h_{x,T}$ for odd $n$, with $r$ edges  and vertices $v_1,\dots,v_{r-1}$ between $v$ and $x$ along $T$.
Relabelling edges from the label of $h'_{x,T}(\Phi)$ to the label of $h'_{v,T}(\Phi)$ gives the sign $(-1)^{r-1}$ because edges are odd after the action.
Vertices are even, so we do not need to consider them.
The sign $(-1)^{r+1}$ written in the second equation is the sign difference already calculated for (2) and (3).}
\end{figure}
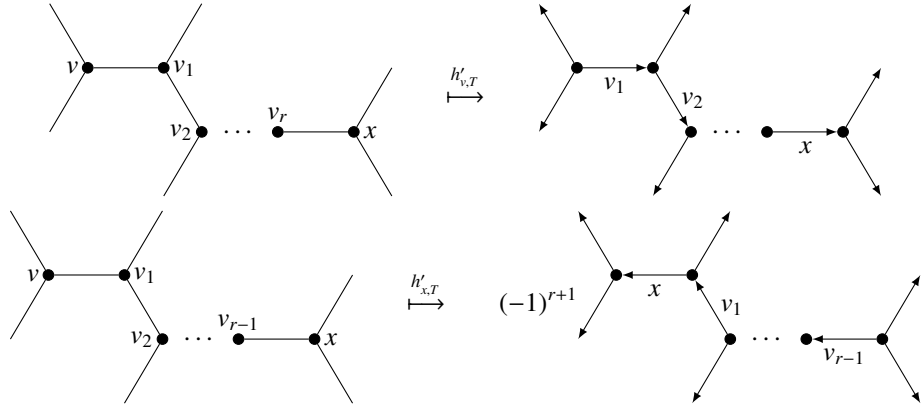

Summed together, the total sign difference between $f\left(h_{x,T}(\Phi)\right)$ and $f\left(h_{v,T}(\Phi)\right)$ for even $n$ is $(-1)^r(-1)^n(-1)^r=1$, and for odd $n$ it is $(-1)^r(-1)^n(-1)^{r-1}=1$. So there is no sign difference and it is always $f\left(h_{x,T}(\Phi)\right)=f\left(h_{v,T}(\Phi)\right)$, and
$$
f\circ h(\Phi)=
f\left(h_{v,T}(\Phi)\right)\sum_{x\in V(\Phi)}(v(x)-2)\neq 0,
$$
as claimed.
\end{proof}

\begin{lemma}
\label{lem:QIpi}
The map $p\circ \iota_{v-1}:\mO\Phi^{v-1}\rightarrow A^{e-v+1}$ is a quasi-isomorphism.
\end{lemma}
\begin{proof}
Both complexes are 1-dimensional, so we need to prove that $p\circ \iota_{v-1}\neq 0$. It is clear from the construction of the maps.
\end{proof}

Here on Figure $\ref{fig:cd2}$ we copy the commutative diagram from Figure \ref{fig:cd} with references to lemmas that show that particular maps are quasi-isomorphisms. Diagram is commutative and listed quasi-isomorphisms connect all depicted complexes, so it implies that all depicted maps are quasi-isomorphisms. Particularly, the first row consist of quasi-isomorphisms, what was to be shown. 

\begin{figure}[H]
$$
\begin{tikzcd}
\left\langle\{\Phi\}\right\rangle
\arrow{r}{h}
\arrow[swap]{dr}{\ref{lem:QIfh}}
& \mO\Phi
\arrow[hookrightarrow]{r}{}
\arrow{d}{\ref{lem:QIf}}
& \mS\Phi
\arrow{d}{\ref{lem:QIg}} \\
& \mO\Phi^{v-1}
\arrow[hookrightarrow]{r}{}
\arrow[swap]{dr}{\ref{lem:QIpi}}
& \mS\Phi^{v-1}
\arrow{d}{\ref{lem:QIp}} \\
& & A^{e-v+1}
\end{tikzcd}
$$
\caption{\label{fig:cd2}
A commutative diagram of complexes. Arrows with reference numbers are quasi-isomorphisms, and they refer to lemmas where it is shown.}
\end{figure}
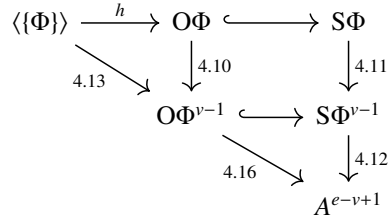
\end{proof}

Two last parts of Theorem \ref{thm:main} now follow directly from Propositions \ref{prop:DOSD} , \ref{prop:SkEq}, \ref{prop:SmallSk} and \ref{prop:main}.

\end{document}